\renewcommand{\le}{\varleq}
\renewcommand{\ge}{\vargeq}
\newcommand{\myforall}{\text{ for all }}
\newcommand{\myand}{\text{ and }}
\newcommand{\myif}{\text{ if }}
\newcommand{\mythen}{\text{ then }}
\newcommand{\seb}{\{\,}
\newcommand{\sen}{\,\}}
\newcommand{\getsby}[1]{\xleftarrow{#1}}
\newcommand{\tesgh}{edge-surjective graph homomorphism}
\newcommand{\pdirectional}{\raise0.05em\hbox{$+$}directional}
\newcommand{\pdirectionality}{\raise0.05em\hbox{$+$}directionality}
\newcommand{\pdirectionalitys}{\raise0.05em\hbox{$+$}directionality }
\newcommand{\pdirectionals}{\raise0.05em\hbox{$+$}directional }
\newcommand{\mdirectional}{\raise0.05em\hbox{$-$}directional}
\newcommand{\mdirectionality}{\raise0.05em\hbox{$-$}directionality}
\newcommand{\mdirectionalitys}{\raise0.05em\hbox{$-$}directionality }
\newcommand{\mdirectionals}{\raise0.05em\hbox{$-$}directional }
\newcommand{\bidirectional}{bidirectional}
\newcommand{\bidirectionals}{bidirectional }
\newcommand{\bidirectionalitys}{bidirectionality }
\newcommand{\Z}{\mathbb{Z}}
\newcommand{\Nonne}{\mathbb{N}}
\newcommand{\Posint}{\mathbb{N}^+}
\newcommand{\Real}{\mathbb{R}}
\newcommand{\bi}{\in \Z}
\newcommand{\beposint}{\in \Posint}
\newcommand{\bpi}{\ge 1}
\newcommand{\benonne}{\in \Nonne}
\newcommand{\bni}{\ge 0}
\newcommand{\diam}{{\rm diam}}
\newcommand{\pstrzinf}[1]{(#1_0,#1_1,#1_2,\dotsc)}
\newcommand{\num}[1]{\abs {#1} }
\newcommand{\ep}{\varepsilon}
\newcommand{\Ccal}{\mathcal{C}}
\newcommand{\Dcal}{\mathcal{D}}
\newcommand{\Gcal}{\mathcal{G}}
\newcommand{\Pcal}{\mathcal{P}}
\newcommand{\Scal}{\mathcal{S}}
\newcommand{\Ucal}{\mathcal{U}}
\newcommand{\sC}{\mathscr{C}}
\newcommand{\sM}{\mathscr{M}}
\newcommand{\sO}{\mathscr{O}}
\newcommand{\sP}{\mathscr{P}}
\newcommand{\kuu}{\emptyset}
\newcommand{\nekuu}{\neq \kuu}
\newcommand{\fai}{\varphi}
\newcommand{\bN}{\boldsymbol{N}}
\newcommand{\barp}{\bar{p}}
\newcommand{\barr}{\bar{r}}
\newcommand{\bars}{\bar{s}}
\newcommand{\bart}{\bar{t}}
\newcommand{\barw}{\bar{w}}
\newcommand{\barx}{\bar{x}}
\newcommand{\bary}{\bar{y}}
\newcommand{\barz}{\bar{z}}
\newcommand{\barX}{\bar{X}}
\newcommand{\ddx}{\ddot{x}}
\newcommand{\ddX}{\ddot{X}}
\newcommand{\centb}{\begin{center}}
\newcommand{\centn}{\end{center}}
\newcommand{\enumb}{\begin{enumerate}}
\newcommand{\enumn}{\end{enumerate}}
\newcommand{\itemb}{\begin{itemize}}
\newcommand{\itemn}{\end{itemize}}
\numberwithin{equation}{section}
\setlist[enumerate,1]{label=(\alph*),ref=(\alph*)}
\setlist[enumerate,2]{label=(\arabic*),ref=(\alph{enumi}-\arabic{enumii})}
\setlist[enumerate,3]{label=(\Alph*),ref=(\roman{enumi}-\alph{enumii}-\Alph*)}
\setlist[enumerate,4]{label=(\arabic*),ref=(\roman{enumi}-\alph{enumii}-\Alph{enumiii}-\arabic*)}
\newlist{alphaenum}{enumerate}{1}
\setlist[alphaenum,1]{label=($\alpha$-\arabic*),ref=($\alpha$-\arabic*)}
\newlist{betaenum}{enumerate}{1}
\setlist[betaenum,1]{label=($\beta$-\arabic*),ref=($\beta$-\arabic*)}
\newlist{gammaenum}{enumerate}{1}
\setlist[gammaenum,1]{label=($\gamma$-\arabic*),ref=($\gamma$-\arabic*)}
\newtheorem{thm}{Theorem}[section]
\newtheorem{lem}[thm]{Lemma}
\newtheorem{prop}[thm]{Proposition}
\theoremstyle{definition}
\newtheorem{defn}[thm]{Definition}
\theoremstyle{remark}
\newtheorem{nota}[thm]{Notation}
\newtheorem{rem}[thm]{Remark}
\crefname{sec}{\S}{\S\S}
\crefname{mainthm}{Theorem}{Theorems}
\crefname{thm}{Theorem}{Theorems}
\crefname{lem}{Lemma}{Lemmas}
\crefname{prop}{Proposition}{Propositions}
\crefname{cor}{Corollary}{Corollaries}
\crefname{defn}{Definition}{Definitions}
\crefname{conj}{Conjecture}{Conjectures}
\crefname{example}{Example}{Examples}
\crefname{nota}{Notation}{Notations}
\crefname{rem}{Remark}{Remarks}
\crefname{note}{Note}{Notes}
\crefname{case}{Case}{Cases}
\crefname{figure}{Figure}{Figures}
\crefname{section}{\S}{\S\S}
\crefname{enumi}{}{}
\crefname{enumii}{}{}
\crefname{equation}{}{}
\newcommand{\Deltainf}{\Delta_{\infty}}
\newcommand{\ov}{\overline}
\newcommand{\ovCcalinf}{\ov{\Ccal}_{\infty}}
\newcommand{\tilc}{\tilde{c}}
\newcommand{\tile}{\tilde{e}}
\newcommand{\abs}[1]{\lvert#1\rvert}
\newcommand{\gap}{{\rm gap}}
\newcommand{\Vp}{V \setminus V_0}
\newcommand{\covrepa}[2]{#1_0 \getsby{#2_1} #1_1 \getsby{#2_2} #1_2 \getsby{#2_3} \dotsb}
\newcommand{\Scalna}{\Scal_{\rm na}}
\newcommand{\Scalue}{\Scal_{\rm ue}}
\begin{document}

\title[proximal Cantor systems with rank 2]
{proximal Cantor systems with topological rank 2\\
 are residually scrambled}

\author{Takashi Shimomura}

\address{Nagoya University of Economics,
 Uchikubo 61-1, Inuyama 484-8504, Japan}
\curraddr{}
\email{tkshimo@nagoya-ku.ac.jp}
\thanks{}

\subjclass[2010]{Primary 37B05, 37B10, 54H20.}

\keywords{proximal, rank, ergodic, mixing, Bratteli--Vershik, Li--Yorke, residually scrambled}

\date{\today}

\dedicatory{}

\commby{}

\begin{abstract}
Downarowicz and Maass (2008) proposed topological ranks
 for all homeomorphic Cantor minimal dynamical systems
 using properly ordered Bratteli diagrams.
In this study, we adopt this definition to the case of all essentially
 minimal zero-dimensional systems.
We consider the cases in which topological ranks are 2 and unique
 minimal sets are fixed points.
Akin and Kolyada (2003), in their study of Li--Yorke sensitivity,
 showed that if the unique minimal set of an essentially minimal system is
 a fixed point,
 then the system must be proximal.
However, a finite topological rank implies expansiveness;
 furthermore, in the case of proximal Cantor systems with topological rank 2,
 the expansiveness is always from the lowest degree.

Rank 2 zero-dimensional systems might be thought as a part of the rank 1 transformations that are considered in the vast field of ergodic theory.
However, these systems are also interesting from the perspective of
 topological chaos theory;
 e.g., in this study, we show that
 all proximal Cantor systems with topological rank 2
 are residually scrambled.
In addition, we investigate the finite invariant measures on these systems.
Evidently, such systems have at most two ergodic measures.
We present a necessary and sufficient condition
 for the unique ergodicity of these systems.
In addition, we show that the number of ergodic measures of systems that are topologically mixing can be 1 and 2.
Moreover, we present examples that are topologically weakly mixing, not topologically mixing, and uniquely ergodic.
Finally, we show that the number of ergodic measures of systems
 that are not weakly mixing can be 1 and 2.
\end{abstract}

\maketitle

\section{Introduction}\label{sec:introduction}
In this paper,
 a pair $(X,f)$ of a compact metric space $X$ and a homeomorphism $f : X \to X$
 is called a \textit{(homeomorphic) topological dynamical system}.
If $X$ is totally disconnected,
 then $(X,f)$ is called a \textit{(homeomorphic) zero-dimensional system};
 further, if $X$ is homeomorphic to the Cantor set,
 then $(X,f)$ is called a \textit{(homeomorphic) Cantor system}.
If $f$ is a continuous surjective map,
 then we explicitly state that
 $(X,f)$ is a continuous surjective topological dynamical system,
 continuous surjective zero-dimensional system, and so on.
The minimal zero-dimensional systems are thoroughly investigated
 in the connection with the theory of $C^*$-algebras.
In a preparatory study, Herman, Putnam, and Skau
 \cite{HERMAN_1992OrdBratteliDiagDimGroupTopDyn}
showed
 \cite[Theorem 4.7]{HERMAN_1992OrdBratteliDiagDimGroupTopDyn}
 states in which
a bijective correspondence exists between
 equivalence classes of essentially simple ordered Bratteli diagrams
 and pointed topological conjugacy classes
 of essentially minimal zero-dimensional systems.
By the word essentially minimal zero-dimensional systems,
 they meant zero-dimensional systems with unique minimal sets. 
In their Bratteli--Vershik model,
 the maximal and minimal paths have to be selected from the minimal sets.
From this, the properly ordered Bratteli diagrams have been used to represent
 minimal Cantor systems, and the maximal and minimal paths are
 selected arbitrarily from the total spaces.
Innumerable works have been done on this model.
In particular,
 Downarowicz and Maass
 in \cite{DOWNAROWICZ_2008FiniteRankBratteliVershikDiagAreExpansive}
 defined the topological ranks for all homeomorphic Cantor minimal systems.
Further, they demonstrated that all homeomorphic Cantor minimal systems
 with topological rank $K > 1$ are expansive.
Thus, defining the topological ranks based on the original essentially simple
 Bratteli diagrams is natural.
We do this for all homeomorphic essentially minimal zero-dimensional systems
 with respect to the essentially simple ordered Bratteli diagrams.
Akin and Kolyada
 \cite[Proposition 2.2]{Akin_2003LiYorkeSens}
 characterized all proximal systems as systems having unique minimal
 sets that are fixed points.
Accordingly, homeomorphic proximal zero-dimensional systems
 are characterized as
 essentially simple Bratteli--Vershik systems
 that have unique fixed points; further, they have topological ranks.
Zero-dimensional proximal systems with finite topological ranks are 
 symbolic (see \cite{Shimomura_2017FiniteRankBVwithPeriodicityExpansive}).
In particular, zero-dimensional proximal systems with topological rank 2
 are symbolic.
We show that the degree of expansiveness in the Bratteli--Vershik representation is always from the lowest degree (see \cref{prop:expansive-from-1}).

In \cite[Proposition 55]{Blanchard_2008TopSizeOfScrambSet},
 Blanchard, Huang, and Snoha
 presented an example of substitution dynamics that is 
 residually scrambled
 (see
 \cref{defn:proximal,defn:Li-Yorke,defn:scrambled,defn:residually-scrambled}).
In finding this proposition, we have conducted a brief survey on homeomorphic proximal Cantor systems with topological rank 2.
In this proposition, they also showed that the system is topologically mixing
 and has some ergodic properties.
We could clarify that the substitution system constructed in
 \cite[Proposition 55]{Blanchard_2008TopSizeOfScrambSet}
 is a proximal
 (see \cref{defn:proximal-system})
 Cantor system with topological rank 2
 (see \cref{defn:rank} and \cref{prop:proposition55-is-rank-2}).
We extend the study to proximal Cantor systems with topological
 rank 2 and show that all proximal Cantor systems with
 topological rank 2 are residually scrambled
 (see \cref{thm:residually-scrambled}).
We also present a necessary and sufficient condition for the 
unique ergodicity of such systems (see \cref{thm:uniquely-ergodic}).

In studying such systems,
 we adopt the graph covering approach in some special forms
 that can easily be
 translated into the Bratteli--Vershik approach.
On this point, Bernardes and Darji in \cite{BernardesDarji_2012GraphTheoreticStructureOfMapsOfTheCantorSpace} had already used the technique using finite directed graphs and its covering maps for the study of continuous surjective or homeomorphic Cantor systems.
The technique has also been used in some works.
For example, Fernandez, Good, and Puljiz \cite{FernandezGoodPuljiz_2017Almost_minimal_systems_and_periodicity_in_hyperspaces} constructed an almost totally minimal homeomorphism of the Cantor set.
Boro{\'{n}}ski, Kupka, and Oprocha \cite{BoronskiKupkaOprocha_2017AMixingCompletelyScrambledSystemExists} showed that there exists a completely scrambled topologically mixing system. 
By the similar approach, we introduce a special kind of graph coverings,
 i.e. graph coverings of Kakutani--Rohlin type that are abbreviated as
 KR-coverings.
With this, we can define the topological rank that matches with
 the one that had been defined onto minimal Cantor systems by Downarowicz
 and Maass \cite{DOWNAROWICZ_2008FiniteRankBratteliVershikDiagAreExpansive}
 (see \cref{thm:coincidence-of-ranks-essentially-simple-BV-and-KR-cov}).

One of the properties of proximal Cantor systems with topological rank 2 identified in this paper is the existence of residual scrambled sets.
Hence, let us recall some basic definitions of topological dynamical systems.
From the viewpoint of Li--Yorke chaotic systems, the size of scrambled sets
 has been discussed in various aspects.
In \cite{Gedeon_1987ThereNoChaotMapWithResidScramSets}, Gedeon
 found that there is no residually scrambled system in the maps of
 the closed interval.
On the other hand, various completely scrambled systems
 (see \cref{defn:completely-scrambled}) have been found
 in zero-dimensional spaces and in spaces of dimension $1 \le n \le \infty$
 (\cite{HUANG_2001HomeoWithWholeCompBeingScrambledSets,Fory_2016InvScramSetsUnifRigidWeakMix,BoronskiKupkaOprocha_2017AMixingCompletelyScrambledSystemExists,Shimomura_2015completelyscrambled}).
Nevertheless, being completely scrambled is a very tight restriction on the
 system.
In \cite{Blanchard_2008TopSizeOfScrambSet}, Blanchard, Huang, and Snoha
 presented various examples of residually scrambled systems in addition to 
 the example mentioned at the beginning of this paper.
In particular, they constructed
 a residually scrambled system with positive topological entropy
 from any proximal residually scrambled system having an invariant measure
 that has positive measure on every non-empty open set
 (see \cite[Proposition 56]{Blanchard_2008TopSizeOfScrambSet}).
Therefore,
 in this paper, we try to extend the above-mentioned example
 \cite[Proposition 55]{Blanchard_2008TopSizeOfScrambSet},
 which is a non-primitive substitution dynamical system of two symbols
 to a larger class of proximal Cantor systems 
 with topological rank 2.
In fact, we show that every proximal Cantor system with topological rank 2
 is topologically conjugate to a residually scrambled symbolic system of
 two symbols (\cref{thm:residually-scrambled}).
There exist at most two ergodic measures; if two exist, 
 exactly one of them has positive measure on every non-empty open set.
Further, using the graph covering method,
 we show that the number of ergodic measures of systems
 that are topologically mixing can be both 1 and 2.
 Moreover, we present examples that are topologically weakly mixing, not topologically mixing, 
 and uniquely ergodic.
Finally, we show that the number of ergodic measures of systems
 that are not weakly mixing can be both 1 and 2.

The remainder of this paper is organized as follows. First, some preliminaries are introduced in \cref{sec:preliminaries}.
We investigate only finite invariant measures.
 Next, the Bratteli--Vershik representation of zero-dimensional essentially minimal 
homeomorphisms is presented in \cref{sec:Bratteli-Vershik-representations}, and a link is established between the Bratteli--Vershik representation
approach and the graph covering approach in the case of zero-dimensional essentially minimal systems. In \cref{sec:proximal-systems}, we state
\cref{thm:proximal}, which characterizes zero-dimensional proximal systems
by means of Bratteli--Vershik systems and graph coverings. In \cref{sec:array-systems}, we present the array system approach developed by Downarowicz and Maass \cite{DOWNAROWICZ_2008FiniteRankBratteliVershikDiagAreExpansive}.
Finally, in \cref{sec:proximal-systems-of-rank-2}, we present the results of our survey on proximal Cantor systems with topological rank 2
as well as some examples.
\section{Preliminaries}\label{sec:preliminaries}
Let  $\Z$ be the set of all integers
 and $\Nonne$ be the set of all non-negative integers.
For integers $a < b$,
 the intervals are denoted by $[a,b] := \seb a, a+1, \dotsc,b \sen$.
A continuous surjective topological dynamical system $(X,f)$
 is \textit{topologically transitive}
 if there exists an $x \in X$
 such that $\seb f^n(x) \mid n \bni \sen$ is dense in $X$.
If $(X,f)$ is topologically transitive and $\num{X}$ is not finite,
 then $X$ does not have isolated points.
When $f$ is a homeomorphism,
 a point $x \in X$ is a \textit{forward transitive point}
 if $\seb f^n(x) \mid n \bni \sen$ is dense in  $X$.
The notion of backward transitive points is also defined in the natural sense.
A topological dynamical system $(X,f)$ is \textit{topologically mixing}
 if for any opene $U,V$, there exists $N > 0$ such that $f^n(U) \cap V \nekuu$
 for all $n \ge N$.
A topological dynamical system $(X,f)$ is \textit{weakly mixing}
 if $(X\times X,f\times f)$ is transitive.
A zero-dimensional system $(X,f)$
 is called a Cantor system if $X$ is homeomorphic
 to the Cantor set, i.e., $X$ does not have isolated points.
We say that
 $(X,f)$ is \textit{expansive} with an \textit{expansive constant} $\ep > 0$
 if for every pair $(x,y) \in X^2 \setminus \Delta_X$,
 there exists an $n \bi$ such that $d(f^n(x),f^n(y)) \ge \ep$.
Let $A$ be a finite set.
An element $x \in A^{\Z}$ is written as $(x_i)_{i \bi}$, and the shift map
 $\sigma : A^{\Z} \to A^{\Z}$ is defined
 as $(\sigma(x))_{i} = x_{i+1}$ for all $i \bi$.
Suppose that a closed subset
 $\Sigma \subset A^{\Z}$ satisfies $\sigma(\Sigma) = \Sigma$.
Then, $(\Sigma,\sigma)$ is called a \textit{(two-sided) subshift}.
A homeomorphic zero-dimensional system is expansive
 if and only if it is topologically conjugate to a two-sided subshift.
%
%
\subsection{Some notions of chaos.}
Let $(X,f)$ be a continuous surjective topological dynamical system.
\begin{defn}\label{defn:proximal}
A pair $(x,y) \in X^2$ is \textit{(forward) proximal} if
\[ \liminf_{n \to +\infty}d(f^n(x),f^n(y)) = 0. \]
\end{defn}
\begin{defn}\label{defn:Li-Yorke}
A pair $(x,y) \in X^2$ is a \textit{(forward) Li--Yorke pair}
 if $(x,y)$ is proximal and satisfies
\[ \limsup_{n \to +\infty}d(f^n(x),f^n(y)) > 0.\]
\end{defn}
\begin{defn}\label{defn:scrambled}
A subset $S \subset X$ is \textit{(forward) scrambled} if
 every $(x,y) \in S^2 \setminus \Delta_S$ is a Li--Yorke pair.
\end{defn}
\begin{defn}\label{defn:residually-scrambled}
A topological dynamical system $(X,f)$ is \textit{residually scrambled}
 if there exists a scrambled set $S \subseteq X$
 that is a dense $G_{\delta}$ subset.
\end{defn}
\begin{defn}\label{defn:completely-scrambled}
A topological dynamical system $(X,f)$ is said to be
 \textit{completely scrambled}
 if $X$ itself is a (forward) scrambled set.
\end{defn}
Because we are mainly dealing with the case in which $f$ is a homeomorphism,
 we also consider systems that are
 \textit{backward proximal, Li--Yorke, scrambled, etc.,}
 in the usual sense.
\begin{defn}\label{defn:proximal-system}
A continuous surjective topological dynamical system $(X,f)$
 is said to be \textit{proximal}
 if every pair $(x,y) \in X^2$ is (forward) proximal.
\end{defn}
Akin and Kolyada \cite{Akin_2003LiYorkeSens}
 gave a characterization of proximal systems as follows:
\begin{thm}[{\cite[Proposition 2.2]{Akin_2003LiYorkeSens}}]
\label{thm:proximal-system}
A continuous surjective topological dynamical system $(X,f)$
 is proximal
 if and only if it has a fixed point that is the unique minimal subset of X.
\end{thm}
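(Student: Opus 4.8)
The plan is to prove the two implications separately, using the product system $(X \times X, f \times f)$ and the existence of minimal subsets in compact systems as the main tools. For the ``if'' direction, suppose $X$ has a fixed point $p$ that is the unique minimal subset. Given an arbitrary pair $(x,y) \in X^2$, I would look at the forward orbit closure $\overline{\{(f \times f)^n(x,y) \mid n \ge 0\}}$ inside $X \times X$. This is a nonempty compact $(f\times f)$-invariant set, so by Zorn's lemma it contains a minimal subset $N$ of $(X\times X, f\times f)$. The two coordinate projections $\pi_1(N)$ and $\pi_2(N)$ are continuous equivariant images of the minimal system $(N, f\times f)$, hence are minimal subsets of $(X,f)$; by the uniqueness hypothesis each equals $\{p\}$, forcing $N = \{(p,p)\}$. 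Since $N$ lies in the forward orbit closure of $(x,y)$, there is a sequence $n_k \to +\infty$ with $(f^{n_k}(x), f^{n_k}(y)) \to (p,p)$, so $\liminf_{n\to+\infty} d(f^n(x),f^n(y)) = 0$ and $(x,y)$ is proximal. As $(x,y)$ was arbitrary, $(X,f)$ is proximal.

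For the ``only if'' direction, assume $(X,f)$ is proximal and fix any minimal subset $M$ (which exists by compactness). The key step is to produce a fixed point inside $M$: pick $p \in M$ and apply proximality to the pair $(p, f(p))$, which gives $\liminf_n d(f^n(p), f(f^n(p))) = 0$. Choosing $n_k$ realizing this liminf and passing to a convergent subsequence $f^{n_k}(p) \to q$, continuity of $f$ yields $d(q, f(q)) = 0$, i.e.\ $q$ is a fixed point, and $q \in M$ since $M$ is closed and invariant. Minimality then forces $M = \{q\}$. Finally, if $M' = \{q'\}$ were a second minimal subset, it too is a fixed point by the same argument, and proximality of $(q,q')$ gives $d(q,q') = \liminf_n d(f^n(q), f^n(q')) = 0$, so $q = q'$; hence the minimal subset is unique.

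The routine ingredients --- existence of minimal sets, and that a projection of a minimal set is minimal --- are standard. The part I expect to require the most care, and which is really the crux, is the fixed-point extraction in the ``only if'' direction: the trick of feeding the pair $(p, f(p))$ into the proximality hypothesis and using continuity of $f$ to pass the relation $d(\cdot, f(\cdot)) \to 0$ to the limit. Once a fixed point is located inside an arbitrary minimal set, minimality and a second application of proximality dispatch both the singleton and the uniqueness claims with no further work.
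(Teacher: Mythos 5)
Your proof is correct. Note, however, that the paper itself does not prove this statement: it is imported by citation from Akin--Kolyada \cite[Proposition 2.2]{Akin_2003LiYorkeSens}, so there is no in-paper argument to compare against. What you have written is a sound, self-contained proof by the standard route: for the ``if'' direction, a minimal set $N$ inside the forward orbit closure of $(x,y)$ in $(X\times X, f\times f)$ projects onto minimal subsets of $(X,f)$ (factors of minimal systems are minimal), which the uniqueness hypothesis collapses to $\{p\}$, forcing $N=\{(p,p)\}$ and hence proximality; for the ``only if'' direction, feeding $(p,f(p))$ into proximality and passing to a limit along a subsequence realizing the liminf produces a fixed point inside any minimal set $M$, whence $M$ is that singleton, and proximality of a pair of fixed points gives uniqueness. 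One small point you glide over --- the possibility that $(p,p)$ lies on the forward orbit of $(x,y)$ rather than being a proper limit point --- is harmless, since $p$ is fixed and the tail of the orbit is then constantly $(p,p)$, so the liminf is still $0$; it would be worth a half-sentence if this were written up formally.
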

A continuous surjective topological dynamical system $(X,f)$ is \textit{sensitive} if
 there exists $\ep > 0$ such that for all $x \in X$ and all $\delta > 0$,
 there exists $y$ with $d(x,y) < \delta$ and $n > 0$ such that
 $d(f^n(x),f^n(y)) \ge \ep$.
Akin and Kolyada
 \cite{Akin_2003LiYorkeSens} presented the notion of Li--Yorke sensitivity:
\begin{defn}\label{defn:Li-Yorke-sensitivity}
We say that a continuous surjective topological dynamical system is
 \textit{Li--Yorke sensitive} if there exists an $\ep > 0$
 such that every $x \in X$ is a limit of points $y \in X$ such that the pair
 $(x, y)$ is proximal and satisfies
\[ \limsup_{n \to +\infty}d(f^n(x),f^n(y)) \ge \ep.\]
\end{defn}
\begin{rem}\label{rem:Li-Yorke-sensitive}
Let $(X,f)$ be a two-sided subshift that is proximal.
Suppose that $X$ does not have isolated points
 and there exists a dense scrambled set $S$.
Then, $(X,f)$ is Li--Yorke sensitive.
To show this, because $(X,f)$ is symbolic,
 there exists an $\ep > 0$ such that
 for all $x,y \in X$,
 $\limsup_{n \to +\infty}d(f^n(x),f^n(y)) > 0$ implies that
 $\limsup_{n \to +\infty}d(f^n(x),f^n(y)) \ge \ep$.
Take $\ep > 0$ to satisfy the above condition.
Let $x \in X$ and $\delta > 0$ be small.
Because $S$ is dense and $X$ does not have isolated points,
 there exist $y_1,y_2 \in S$ with $y_1 \ne y_2$
 such that $d(x,y_a) < \delta$  $(a = 1,2)$.
Because $\limsup_{n \to +\infty}d(f^n(y_1),f^n(y_2)) \ge \ep$,
 for one of $a = 1,2$,
 the inequality $\limsup_{n \to +\infty}d(f^n(x),f^n(y_a)) \ge \ep/2$
follows.
\end{rem}
\subsection{Uniformly chaotic set.}
In this subsection, we introduce the notion presented by Akin \textit{et al.}
 in \cite{AKIN_2009SufficCondUnderWhichTransSysIsChaotic}.
Let $(X,f)$ be a continuous surjective topological dynamical system.
A subset $A \subset X$ is \textit{uniformly recurrent}
 if for every $\ep > 0$, there exists an arbitrarily large $k > 0$ such that
 $d(f^k(x),x) < \ep$ for all $x \in A$.
A subset $A \subset X$ is \textit{uniformly proximal}
 if $\liminf_{n \to +\infty}\diam(A) = 0$.
\begin{defn}[Akin \textit{et al.}
 \cite{AKIN_2009SufficCondUnderWhichTransSysIsChaotic}]
\label{defn:uniformly-chaotic-set}
Let $(X,f)$ be a continuous surjective topological dynamical system.
A subset $K \subset X$ is called a \textit{uniformly chaotic set} if
 there exist Cantor sets $C_1 \subset  C_2 \subset \dotsb$ such that
\enumb
\item $K = \bigcup_{i = 1}^{\infty}C_i$,
\item for each $N \bpi$, $C_N$ is uniformly recurrent, and
\item for each $N \bpi$, $C_N$ is uniformly proximal.
\enumn
Here, $(X,f)$ is \textit{(densely) uniformly chaotic} if
 $(X,f)$ has a (dense) uniformly chaotic subset.
\end{defn}
Every uniformly chaotic set is a scrambled set in the sense of Li--Yorke.
For a complete metric space without isolated points, a subset is called a
 \textit{Mycielski set} if it is a countable union of Cantor sets.
In \cite{AKIN_2009SufficCondUnderWhichTransSysIsChaotic},
 for various cases of transitive continuous surjective topological dynamical systems,
 Akin \textit{et al.} constructed uniformly scrambled sets
 as Mycielski sets both concretely and by using an instance of
 Mycielski's theorem.
For further discussion,
 see \cite{AKIN_2009SufficCondUnderWhichTransSysIsChaotic}.
We only cite the next result:
\begin{thm}
[{\cite[Corollary 3.2.~(1)]{AKIN_2009SufficCondUnderWhichTransSysIsChaotic}}]
\label{thm:transitive-fixed}
Suppose that $(X,f)$,
 a continuous surjective topological dynamical system without isolated points,
 is transitive and has a fixed point.
Then, it is densely uniformly chaotic.
\end{thm}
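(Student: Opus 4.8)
The plan is to exhibit a dense uniformly chaotic subset directly, by an inductive Cantor-scheme construction in the spirit of Mycielski's theorem. First I would record the basic structure. Since $(X,f)$ is transitive with no isolated points, $X$ is perfect and the set of forward transitive points is a dense $G_\delta$. Let $e$ denote the fixed point. If $x$ is a forward transitive point, then its forward orbit is dense, so $f^{n_k}(x) \to e$ along some subsequence; as $f(e)=e$ this gives $\liminf_{n\to+\infty} d(f^n(x),f^n(e)) = 0$, i.e. $(x,e)$ is proximal. Thus every transitive point is proximal to $e$, and the set of points proximal to $e$ is a dense $G_\delta$. The target is a Mycielski set $K = \bigcup_{N} C_N$ (with $C_N$ Cantor and increasing) that is dense and such that each $C_N$ is simultaneously uniformly recurrent and uniformly proximal; since such a $K$ is automatically scrambled, density then yields that $(X,f)$ is densely uniformly chaotic.

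Next I would carry out the scheme. Fixing a countable base $\{O_m\}$ of $X$, I would build by induction on levels $l$ a binary tree of opene sets $\{\,U_s : s \in \{0,1\}^{l}\,\}$ with $\overline{U_{s0}},\overline{U_{s1}}$ disjoint subsets of $U_s$ and $\mesh \to 0$, so that the nested intersections along branches form a Cantor set, and by seeding infinitely many such subtrees along the base I obtain an increasing sequence of Cantor sets with dense union. Interleaved with the splitting I would enforce two families of conditions. For uniform proximality I would, at suitable stages, produce a single time $n$ and shrink every current piece so that $f^{n}\!\left(\bigcup_{|s|=l} U_s\right) \subseteq B(e,1/l)$, forcing $\diam f^{n}(C_N) \to 0$ along those stages. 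For uniform recurrence I would, at the alternating stages, produce a single large time $k$ and shrink each piece so that $f^{k}(\overline{U_s}) \subseteq B(x_s,1/l)$ for a chosen base point $x_s$ of $U_s$, giving $d(f^{k}(y),y)$ uniformly small on the piece.

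The hard part is the synchronization: arranging one common time $n$ (respectively $k$) that works for all finitely many pieces of a level at once. Transitivity alone gives, for each individual opene piece, only an infinite set of visit times to a neighborhood of $e$, and these sets need not meet in a common time. Here the fixed point is essential: since $f(e)=e$, continuity at $e$ yields, for every target $\ep$ and every prescribed window length $M$, a $\delta$ with $f^{j}(B(e,\delta)) \subseteq B(e,\ep)$ for $0 \le j \le M$, so a small neighborhood of $e$ is forward almost-invariant for any bounded number of steps. The mechanism is then to park the pieces near $e$ one at a time inside overlapping time windows: transitivity drives a chosen piece into $B(e,\delta)$, almost-invariance keeps it near $e$ throughout a long window, and the entry times of the remaining pieces are forced to fall inside that window, threading a single time at which all pieces lie in $B(e,\ep)$; the recurrence times are extracted along the same synchronized windows by a further transitivity step. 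I expect the bookkeeping of this interleaving — ensuring that securing a proximality time does not destroy a previously secured recurrence time and conversely, while maintaining the splitting and $\mesh \to 0$ — to be the most delicate part of the argument. This is exactly the inductive heart carried out by Akin \textit{et al.}; granting it, $K = \bigcup_{N} C_N$ is the desired dense uniformly chaotic set.
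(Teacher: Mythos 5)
You should first note that the paper does not prove this statement at all: it is imported verbatim from Akin \emph{et al.}\ (introduced by ``We only cite the next result''), so there is no internal proof to compare against, and your sketch has to stand on its own. Its overall strategy --- a Cantor scheme producing a dense Mycielski set $K=\bigcup_N C_N$ with interleaved uniform-recurrence and uniform-proximality stages --- is indeed the strategy of the cited source, and your preliminary observations (perfectness, density of forward transitive points, every transitive point being proximal to the fixed point $e$) are correct. The problem is that the one step you identify as ``the inductive heart'' is both the entire content of the theorem and, as you describe it, does not work; ending with ``granting it'' defers exactly what had to be proved.

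Concretely, the windowing mechanism is circular. To park a piece $U_1$ near $e$ throughout a window of length $M$, you must first choose a persistence radius $\delta$ (so that $f^j(B(e,\delta))\subseteq B(e,\ep)$ for $0\le j\le M$) and only then invoke transitivity to get an entry time $n_1$ and a nonempty open $V_1\subseteq U_1\cap f^{-n_1}(B(e,\delta))$. But transitivity gives no control whatsoever over \emph{when} the remaining pieces enter $B(e,\delta')$: the hitting-time sets $\{n : f^n(U_2)\cap B(e,\delta')\neq\emptyset\}$ of a merely transitive system can have arbitrarily long gaps, so nothing forces those entry times into the window $[n_1,n_1+M]$. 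Enlarging $M$ to accommodate them shrinks $V_1$ and changes the relevant entry times, and the dependence never closes. The idea that actually makes the synchronization trivial --- and which is what the fixed point is really for --- is to place all reference points on a \emph{single} orbit. Take a forward transitive point $x_0$; since $X$ has no isolated points, $\omega(x_0)=X$, so $x_0$ is recurrent ($f^{n_k}(x_0)\to x_0$) and its orbit approaches $e$ ($f^{m_k}(x_0)\to e$). Given nonempty open $U_1,\dotsc,U_m$, pick times $i_j$ with $y_j:=f^{i_j}(x_0)\in U_j$. Then for the common times $N=m_k$ one has $f^{N}(y_j)=f^{i_j}\bigl(f^{m_k}(x_0)\bigr)\to f^{i_j}(e)=e$ \emph{simultaneously for all} $j$, because only finitely many continuous maps $f^{i_j}$ are involved and $e$ is fixed; likewise for $N=n_k$ one gets $f^{N}(y_j)\to y_j$ simultaneously. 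Pulling back by continuity of $f^N$ to small neighborhoods $V_j\subseteq U_j$ of $y_j$ yields, with no circularity, the common proximality and recurrence times needed at every stage of your Cantor scheme. This single-orbit argument (applied in the cited paper to a transitive point of $X\times Y$, here with $Y=\{e\}$) is the missing ingredient; without it your induction cannot be closed, so the proposal as written has a genuine gap.
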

This implies that the topologically transitive
 proximal Cantor continuous surjective systems 
 are densely uniformly chaotic.
Because proximal Cantor systems with topological rank 2 are always
 transitive (see \cref{rem:transitive}), they are densely uniformly chaotic.
\subsection{Graph covering.}\label{sec:covering}
%
%
In many works, combinatorial studies on zero-dimensional systems
 are based on the Bratteli--Vershik representation.
Howerver, the graph covering approach of representing zero-dimensional systems   is very natural in some cases, and makes it easy to describe the phenomena.
In this subsection, we recall the method
 in \cite{Shimomura_2014SpecialHomeoApproxCantorSys}.
We also need some results in \cite{Shimomura_2014ergodic}
 to study the ergodicity of proximal Cantor systems with
 topological rank 2.
We note that, long before our construction of general graph coverings,
 Gambaudo and Martens \cite{Gambaudo_2006AlgTopMinCantSets} had 
 already introduced a combinatorial representation for all
 continuous surjective zero-dimensional minimal systems and made essential
 study on the ergodicity, which has been rather formally generalized
 to all zero-dimensional systems in \cite{Shimomura_2014ergodic}.

A pair $G = (V,E)$ consisting of a finite set $V$
 and a relation $E \subseteq V \times V$ on $V$
 can be considered as a directed graph with the set of vertices $V$
 and the set of edges $E$,
 i.e., there exists an edge from $u$ to $v$ when $(u,v) \in E$.
For a finite directed graph $G = (V,E)$, we write $V = V(G)$ and $E = E(G)$.
%
%
\begin{nota}
In this paper, we assume that a finite directed graph $G = (V,E)$
 is a surjective relation,
 i.e., for every vertex $v \in V$, there exist edges $(u_1,v),(v,u_2) \in E$.
\end{nota}
For finite directed graphs $G_i = (V_i,E_i)$ with $i = 1,2$,
 a map $\fai : V_1 \to V_2$ is said to be a \textit{graph homomorphism}
 if for every edge $(u,v) \in E_1$,
 it follows that $(\fai(u),\fai(v)) \in E_2$.
In this case, we write $\fai : G_1 \to G_2$.
For a graph homomorphism $\fai : G_1 \to G_2$,
 we say that $\fai$ is \textit{edge-surjective}
 if $\fai(E_1) = E_2$.
Suppose that a graph homomorphism $\fai : G_1 \to G_2$
 satisfies the following condition:
\[(u,v),(u,v') \in E_1 \text{ implies that } \fai(v) = \fai(v').\]
In this case, $\fai$ is said to be \textit{\pdirectional}.
Suppose that a graph homomorphism $\fai$
 satisfies both of the following conditions:
\[(u,v),(u,v') \in E_1 \text{ implies that } \fai(v) = \fai(v') \myand \]
\[(u,v),(u',v) \in E_1 \text{ implies that } \fai(u) = \fai(u').\]
Then, $\fai$ is said to be \textit{\bidirectional}.
%
%
\begin{defn}\label{defn:cover}
For finite directed graphs $G_1$ and $G_2$,
 a graph homomorphism $\fai : G_1 \to G_2$
 is called a \textit{cover} if it is a \pdirectionals \tesgh.
\end{defn}
For a sequence $G_1 \getsby{\fai_2} G_2 \getsby{\fai_3} \dotsb$
 of graph homomorphisms and $m > n$,
 we write
 $\fai_{m,n} := \fai_{n+1} \circ \fai_{n+2} \circ \dotsb \circ \fai_m$.
Then, $\fai_{m,n}$ is a graph homomorphism.
If all ${\fai_i}$ $(i \beposint)$ are edge surjective,
 then every $\fai_{m,n}$ is edge surjective.
If all ${\fai_i}$ $(i \beposint)$ are covers, every $\fai_{m,n}$ is a cover.
%
%
Let $G_0 := \left(\seb v_0 \sen, \seb (v_0,v_0) \sen \right)$
 be a singleton graph.
For a sequence of graph covers
 $G_1 \getsby{\fai_2} G_2 \getsby{\fai_3} \dotsb$, we
 attach the singleton graph $G_0$ at the head.
We refer to a sequence of graph covers
 $\covrepa{G}{\fai}$
 as a \textit{graph covering} or just a \textit{covering}.
In the original paper,
 we used the numbering $G_n \getsby{\fai_n} G_{n+1}$;
 however, in this paper, considering the numbering of Bratteli diagrams,
 we use this numbering.
%
%
Let us denote the directed graphs as $G_i = (V_i,E_i)$ for $i \benonne$.
We define the \textit{inverse limit} of $\Gcal$ as follows:
\[V_{\Gcal} := \seb (v_0,v_1,v_2,\dotsc)
 \in \prod_{i = 0}^{\infty}V_i~|~v_i = \fai_{i+1}(v_{i+1})
 \text{ for all } i \benonne \sen \text{ and}\]
\[E_{\Gcal} :=
\seb (x,y) \in V_{\Gcal} \times V_{\Gcal}~|~
(u_i,v_i) \in E_i \text{ for all } i \benonne\sen,\]
where $x = (u_0,u_1,u_2,\dotsc), y = (v_0,v_1,v_2,\dotsc) \in V_{\Gcal}$.
The set $\prod_{i = 0}^{\infty}V_i$ is equipped with the product topology.
\begin{defn}\label{defn:open-sets-of-vertices}
Let $X = V_{\Gcal}$,
 and let us define a map $f : X \to X$ by $f(x) = y$ if and only if
 $(x,y) \in E(\Gcal)$.
Then, a pair $(X,f)$ is called the \textit{inverse limit} of $\Gcal$.
\end{defn}
\begin{nota}\label{nota:vertex-and-real}
For each $n \benonne$,
 the projection from $X$ to $V_n$ is denoted by $\fai_{\infty,n}$.
For $v \in V_n$,
 we denote a closed and open set as $U(v) := \fai_{\infty,n}^{-1}(v)$.
For a subset $A \subset V_n$,
 we denote a closed and open set as $U(A) := \bigcup_{v \in A}U(v)$.
Let $\seb n_k \sen_{k \bpi}$ be a strictly increasing sequence.
Suppose that there exists a sequence
 $\seb u_{n_k} \mid u_{n_k} \in V_{n_k}, k \bpi\sen$ of vertices such that
 $\fai_{n_{k+1},n_k}(u_{n_{k+1}})= u_{n_k}$ for all $k \bpi$.
Then, there exists a unique element
 $x \in V_{\Gcal}$ such that $x \in U(u_{n_k})$
 for all $k \bpi$.
This element is denoted as $x = \lim_{k \to \infty}u_{n_k}$.
\end{nota}
%
%
%
The next theorem follows:
\begin{thm}[Theorem 3.9 and Lemma 3.5 of \cite{Shimomura_2014ergodic}]
\label{thm:0dim=covering}
Let $\Gcal : \covrepa{G}{\fai}$ be a covering.
Let $(X,f)$ be the inverse limit of $\Gcal$.
Then, $(X,f)$ is a continuous surjective zero-dimensional system.
Conversely, every continuous surjective zero-dimensional system
 can be written in this manner.
Furthermore, if $\Gcal$ is a \bidirectionals covering,
 then $(X,f)$ is a homeomorphic zero-dimensional system.
Conversely, every homeomorphic zero-dimensional system
 can be written in this manner.
\end{thm}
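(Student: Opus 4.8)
The plan is to establish the four assertions separately: that the inverse limit of a covering is always a continuous surjective zero-dimensional system; that conversely every such system arises this way; that \bidirectionalitys of the covering upgrades $f$ to a homeomorphism; and that conversely every homeomorphic zero-dimensional system admits a \bidirectionals covering. The two forward implications are essentially inverse-limit bookkeeping, while the two converses rest on constructing a suitable refining sequence of clopen partitions, which is where the genuine content lies.

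First I would analyze the inverse limit $(X,f)$ of $\Gcal : \covrepa{G}{\fai}$. Since $X = V_{\Gcal}$ is a closed subset of the product $\prod_{i \benonne} V_i$ of finite discrete spaces, it is compact, metrizable, and totally disconnected, hence zero-dimensional. For the map $f$ of \cref{defn:open-sets-of-vertices}, single-valuedness is exactly where \pdirectionalitys enters: if $(x,y),(x,y') \in E(\Gcal)$ with $x = (u_i)_i$, $y = (v_i)_i$, $y' = (v'_i)_i$, then at each level $(u_{i+1},v_{i+1}),(u_{i+1},v'_{i+1}) \in E_{i+1}$, so \pdirectionalitys of $\fai_{i+1}$ gives $v_i = \fai_{i+1}(v_{i+1}) = \fai_{i+1}(v'_{i+1}) = v'_i$ for every $i$, whence $y = y'$. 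Existence of $f(x)$, and surjectivity of $f$, follow from the surjective-relation convention together with edge-surjectivity: choosing any out-edge of $u_n$ (resp. any in-edge of $v_n$) at a high level $n$ and pushing it downward through the graph homomorphisms $\fai_{i+1}$ produces compatible finite chains, so the associated inverse system of nonempty finite truncation sets has nonempty limit by compactness. Continuity of $f$ is then automatic, as its graph $E(\Gcal)$ is closed in the compact Hausdorff space $X \times X$ and $f$ is single-valued. This settles the first assertion.

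For the two converses I would fix a generating sequence of clopen partitions. Given $(X,f)$ continuous surjective zero-dimensional, I choose finite clopen partitions $\Pcal_0 = \{X\}, \Pcal_1, \Pcal_2, \dotsc$ with $\Pcal_{n+1} \refine \Pcal_n$ and $\mesh(\Pcal_n) \to 0$, subject to the key condition that $\Pcal_{n+1} \refine f^{-1}\Pcal_n$; set $V_n := \Pcal_n$, declare $(P,Q) \in E_n$ iff $f(P) \cap Q \nekuu$, and let $\fai_{n+1}$ send a piece of $\Pcal_{n+1}$ to the $\Pcal_n$-piece containing it. One checks that each $\fai_{n+1}$ is an \tesgh (both being graph-homomorphic and edge-surjective reduce to chasing a witness point $z \in P$ with $f(z) \in Q$ down to its $\Pcal_{n+1}$-pieces), and that the refinement $\Pcal_{n+1} \refine f^{-1}\Pcal_n$ is precisely what forces $f(P')$ into a single $\Pcal_n$-piece for each $P' \in \Pcal_{n+1}$, i.e. exactly the \pdirectional clause. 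Thus $\Gcal$ is a covering, and the addressing map $\Phi(x) := (P_n(x))_{n \ge 0}$, where $P_n(x) \in \Pcal_n$ is the piece containing $x$, is a homeomorphism of $X$ onto $V_{\Gcal}$ (injective since $\mesh(\Pcal_n) \to 0$, surjective by compactness) that intertwines $f$ with the inverse-limit map, because $f(x) \in f(P_n(x)) \cap P_n(f(x))$ gives $(P_n(x), P_n(f(x))) \in E_n$ at every level, and uniqueness of the inverse-limit successor pins down the image.

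For the homeomorphism statements, in the forward direction injectivity of $f$ is the mirror of single-valuedness: if $f(x) = f(x')$, the second defining clause of \bidirectionalitys forces $u_i = u'_i$ at each level, so $f$ is a continuous bijection of a compact Hausdorff space and hence a homeomorphism. In the converse direction I strengthen the refinement to $\Pcal_{n+1} \refine f^{-1}\Pcal_n \vee f\Pcal_n$; the extra clause $\Pcal_{n+1} \refine f\Pcal_n$ makes $f^{-1}(Q')$ lie in a single $\Pcal_n$-piece for each $Q' \in \Pcal_{n+1}$, which is exactly the backward \pdirectional condition, so the covering is \bidirectional, and the same $\Phi$ serves as the conjugacy. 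The one genuinely delicate point, and the step I expect to absorb most of the care, is this converse construction: verifying that the refinement conditions on $\Pcal_{n+1}$ translate cleanly into the forward and backward \pdirectional clauses and that $\Phi$ is a topological conjugacy; the remaining arguments are routine inverse-limit and compactness manipulations.
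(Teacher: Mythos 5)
Your proposal is correct and takes essentially the same approach as the paper: the paper itself imports this theorem from \cite{Shimomura_2014ergodic} rather than reproving it, but its \cref{rem:partition-covering} records precisely your dictionary between graph coverings and refining sequences of finite clopen partitions satisfying $f(U)\subset U'$, which is the mechanism behind both converse directions. Your remaining verifications (single-valuedness and existence of $f$ from \pdirectionalitys and compactness, injectivity from the second \bidirectionalitys clause, and the extra refinement by $f\Pcal_n$ in the homeomorphic case) are the same details carried out in the cited source.
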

\begin{rem}\label{rem:partition-covering}
Let $\covrepa{G}{\fai}$ be a graph
 covering.
Let $(X,f)$ be the inverse limit.
For each $n \bni$, the set $\Ucal(G_n) := \seb U(v) \mid v \in V(G_n) \sen$
 is a closed and open partition
 such that $U(v) \cap f(U(u)) \nekuu$ if and only if $(u,v) \in E(G_n)$.
Furthermore, $\bigcup_{n \bni} \Ucal_n$ generates the topology of $X$.
Conversely, suppose that $\Ucal_n$ $(n \bni)$
 is a refining sequence of finite closed and open partitions
 of a compact metrizable zero-dimensional space $X$, $\bigcup_{n \bni}\Ucal_n$
 generates the topology of $X$, and $f : X \to X$ is a
 continuous surjective map such that for any $U \in \Ucal_{n+1}$, there exists
 $U' \in \Ucal_n$ such that $f(U) \subset U'$.
We assume that $\Ucal_0 = \seb X \sen$.
Then, we can define a graph covering such that for each $n \bni$,
 $V(G_n) = \Ucal_n$, and for every $(u,v) \in V(G_n) \times V(G_n)$,
 $(u,v) \in E(G_n)$, if and only if $f(u) \cap v \nekuu$.
It is evident that the inverse limit of this graph covering is topologically conjugate to the original.
\end{rem}
%
%
%
\begin{nota}\label{nota:graph-covering}
Let $\covrepa{G}{\fai}$ be a graph
 covering.
Let $(X,f)$ be the inverse limit.
We present a notation that is used in this paper:
\itemb
\item[(N--1)]  We write $G_{\infty} = (X,f)$;
\item[(N--2)]  We fix a metric on $X$;
\item[(N--3)]  For each $i \bni$, we write $G_i = (V_i,E_i)$
\item[(N--4)]  For each $i \bni$,
 we define $U(v) := \fai^{-1}_{\infty,i}(v)$
 for $v \in V_i$ and $\Ucal(G_i) := \seb U(v) ~|~v \in V_i\sen$; and
\item[(N--5)]  for each $i \bni$,
 there exists a bijective map $V_i \ni v \leftrightarrow U(v) \in \Ucal(G_i)$.
We note that for each $i \bni$ and $u,v \in V_i$,
 it follows that $(u,v) \in E_i$ if and only if $f(U(u)) \cap U(v) \nekuu$.
\itemn
\end{nota}
%
\begin{nota}
Let $G = (V,E)$ be a surjective directed graph.
A sequence of vertices $(v_0,v_1,\dotsc,v_l)$ of $G$
 is said to be a \textit{walk} of
 \textit{length} $l$ if $(v_i, v_{i+1}) \in E$ for all $0 \le i < l$.
We denote $w(i) := v_i$ for each $i$ with $0 \le i \le l$.
We denote $l(w) := l$.
We say that a walk $w = (v_0,v_1,\dotsc,v_l)$ is a \textit{path}
 if $v_i$ $(0 \le i \le l)$ are mutually distinct.
A walk $c = (v_0,v_1,\dotsc,v_l)$ is said to be a \textit{cycle} of period $l$
 if $v_0 = v_l$,
 and a cycle $c = (v_0,v_1,\dotsc,v_l)$
 is a \textit{circuit} of period $l$
 if $v_i$ $(0 \le i < l)$ are mutually distinct.
A circuit $c$ and a path $p$ are considered to be subgraphs of $G$
 with period $l(c)$ and length $l(p)$, respectively.
For a walk $w = (v_0,v_1,\dotsc,v_l)$,
 we define $V(w) := \seb v_i \mid 0 \le i \le l \sen$
 and $E(w) := \seb (v_i,v_{i+1}) \mid 0 \le i < l \sen$.
For a subgraph $G'$ of $G$,
 we also define $V(G')$ and $E(G')$ in the same manner.
For a walk $w$ and a subgraph $G'$,
 we also denote a closed and open set as $U(w) := \bigcup_{v \in V(w)}U(v)$
 and $U(G') := \bigcup_{v \in V(G')}U(v)$.
\end{nota}
\begin{nota}\label{nota:projected-walk}
Let $G_1 = (V_1,E_1)$ and $G_2 = (V_2,E_2)$ be finite directed graphs.
Let $\fai : G_1 \to G_2$ be a graph homomorphism.
Let $w = (v_0, v_1, \dotsc, v_l)$ be a walk in $G_1$.
Then, the walk $\fai(w) := (\fai(v_0),\fai(v_1),\dotsc,\fai(v_l))$ is defined.
\end{nota}
%
%
\begin{nota}
Let $w_1 = (u_0,u_1,\dotsc, u_l)$ and $w_2 = (v_0,v_1,\dotsc, v_{l'})$ be walks
 such that $u_l = v_0$.
Then, we denote $w_1  w_2 := (u_0,u_1,\dots,u_l,v_1,v_2,\dotsc,v_{l'})$.
Evidently, we get $l(w_1  w_2) = l+l'$.
If $c$ is a cycle of length $l$, then for any positive integer $n$,
 a cycle $c^n$ of length $ln$ is well defined.
When a walk $w  c  w'$ can be constructed, a walk $w c^0  w'$ implies that $w  w'$.
\end{nota}
\subsection{Invariant measures}\label{subsec:invariant-measures}
In \cite{Bezuglyi_2012FiniteRankBraDiagStructInvMeas},
 in the context of the finite-rank Bratteli--Vershik
 representations,
 Bezuglyi \textit{et al.}
 investigated the theory of invariant measures.
In \cite{Shimomura_2014ergodic}, we studied some relations between
 the invariant probability measures of continuous surjective zero-dimensional
 systems and the circuits of the graphs.
In this study,
 in addition to \cite{Bezuglyi_2012FiniteRankBraDiagStructInvMeas},
 we followed the first half of
 \cite[\S 3]{Gambaudo_2006AlgTopMinCantSets},
 in which Gambaudo and Martens had described the combinatorial construction of
 invariant measures.
For the calculation of ergodicity,
 we would like to use some graph covering
 methods rather than the Bratteli diagrams.
Therefore, let us summarize some results of \cite{Shimomura_2014ergodic}.
To avoid redundancy, we cite only some results and necessary tools
 from the above paper, i.e. general theories are sometimes omitted.

Let $\Gcal : \covrepa{G}{\fai}$ be a graph covering.
We write $G_{\infty} = (X,f)$ as \cref{nota:graph-covering}.
The details of our approach can be found in \cite[\S\S 4--6]{Shimomura_2014ergodic};
 hence, we omit the proofs throughout this section.
Basic notions that are not used in this manuscript are also omitted.
We refer the interested readers to the above paper.

Let us denote the set of all invariant Borel finite measures in $(X,f)$
 as $\sM_f = \sM_f(X)$,
 and the invariant Borel probability measures as $\sP_f = \sP_f(X)$.
Both $\sM_f$ and $\sP_f$ are endowed with a weak topology.

For each $e = (u,v) \in E_n$,
 we denote a closed and open set as $U(e) := U(u) \cap f^{-1}(U(v))$.
Note that for $e \neq e'~(e,e' \in E_n)$,
 it follows that $U(e) \cap U(e') = \kuu$.
%
We note that if we consider the sets $\seb U(v) \mid v \in V_n, n \bni \sen
 = \bigcup_ {n \bni} \Ucal(G_n)$
 and $\seb U(e) \mid e \in E_n, n \bni \sen$,
 then they are open bases of the topology of $X$.
Suppose that non-negative real values $\mu(U)$ are assigned for all $U \in \sO_V$ and $U \in \sO_E$ satisfying the condition in \cite[Lemma 4.1]{Shimomura_2014ergodic}.
Then, $\mu$ is a countably additive measure on the set of all open sets of $X$.
Therefore, as is well known, $\mu$ can be extended to the unique Borel measure on $X$.
A Borel measure $\mu$ on $X$, constructed in this manner, is a probability measure if and only if $1 = \sum_{v \in V_n} \mu(U(v))$ for all $n \bni$.
Thus, we obtain the following:
\begin{prop}\label{lem:creation-of-measures}
Suppose that non-negative real values $\mu(U)$ are assigned for all $U \in \sO_E$ such that the conditions
 in \cite[Lemma 4.1]{Shimomura_2014ergodic} are satisfied.
Then, the resulting $\mu$ is an invariant measure.
\end{prop}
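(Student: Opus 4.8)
The plan is to take the existence of the Borel measure $\mu$ itself for granted from the paragraph immediately preceding the statement --- namely, that once non-negative values $\mu(U)$ are assigned on $\sO_E$ and \cref{eqn:measure-edge} holds, the resulting set function is countably additive on the open sets and extends uniquely to a finite Borel measure on $X$ --- and to devote the entire argument to showing that this $\mu$ is $f$-invariant. This is the only new content, and it is where \cref{eqn:measure-invariant} must be used.

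First I would observe that the vertex masses are forced by the edge masses. By \cref{lem:inv-on-vertex}, for each $n \benonne$ and each $v \in V_n$ the clopen set $U(v)$ is the disjoint union $\bigcup_{e \in E_{n,+}(v)} U(e)$ while $f^{-1}(U(v))$ is the disjoint union $\bigcup_{e \in E_{n,-}(v)} U(e)$ (disjointness holds because $U(e) \cap U(e') = \kuu$ for $e \ne e'$). Finite additivity of $\mu$ then gives $\mu(U(v)) = \sum_{e \in E_{n,+}(v)} \mu(U(e))$ and $\mu(f^{-1}(U(v))) = \sum_{e \in E_{n,-}(v)} \mu(U(e))$. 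Letting $\nu := \mu \circ f^{-1}$ be the pushforward, which is again a finite Borel measure since $f$ is continuous, I combine these two computations with the hypothesis \cref{eqn:measure-invariant} to obtain, for every $n$ and every $v \in V_n$,
\[
\nu(U(v)) = \mu(f^{-1}(U(v))) = \sum_{e \in E_{n,-}(v)} \mu(U(e)) = \sum_{e \in E_{n,+}(v)} \mu(U(e)) = \mu(U(v)).
\]
Thus $\mu$ and $\nu$ agree on every cell of every partition $\Ucal(G_n)$.

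Finally I would upgrade this cell-wise agreement to the equality $\nu = \mu$ of Borel measures, which is precisely the assertion $\mu(f^{-1}(B)) = \mu(B)$ for all Borel $B$, i.e.\ invariance. Since $X$ is compact and zero-dimensional and the partitions $\Ucal(G_n)$ refine as $n$ increases and generate the topology, every clopen subset of $X$ is a finite disjoint union of cells $U(v)$ taken at a common, sufficiently large level; by finite additivity $\mu$ and $\nu$ therefore agree on the Boolean algebra $\Acal$ of all clopen sets. As $\Acal$ is a $\pi$-system that generates the Borel $\sigma$-algebra and both measures are finite, the uniqueness part of measure extension (equivalently, the $\pi$-$\lambda$ theorem) forces $\mu = \nu$ on all Borel sets.

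The disjoint decompositions and the countable additivity of $\mu$ are already supplied by \cref{lem:inv-on-vertex} and the preceding paragraph, so the genuinely substantive step is the cell-wise identity, which is exactly \cref{eqn:measure-invariant}. The step most deserving of care is the last one: the passage from agreement on the generating algebra of clopen sets to equality of the two Borel measures, where one must explicitly invoke that the clopen algebra generates the Borel $\sigma$-algebra and that two finite Borel measures coinciding on a generating $\pi$-system coincide everywhere. Everything else is bookkeeping with the decompositions of $U(v)$ and $f^{-1}(U(v))$ into edge sets.
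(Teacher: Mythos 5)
Your proof is correct, and it follows exactly the route the paper intends: the paper itself gives no proof here (it defers all details in this section to \cite[\S\S 4--6]{Shimomura_2014ergodic}), presenting the proposition as an immediate consequence of the preceding construction of the Borel measure together with the decompositions in \cref{lem:inv-on-vertex}. Your argument---pushing forward by $f$, matching $\mu$ and $\mu\circ f^{-1}$ on every cell $U(v)$ via \cref{eqn:measure-invariant}, and then upgrading to equality of Borel measures through the clopen algebra and the $\pi$-$\lambda$ theorem---is precisely the natural completion of that outline, and each step (including using only the measure-free decomposition content of \cref{lem:inv-on-vertex}, so no circularity) is sound.
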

Let $G = (V,E)$ be a graph.
We denote the set of all circuits (subgraphs) as $\sC(G)$.
Let $c \in \sC(G)$ and $s \in \Real$.
Then, we denote $sc := \sum_{e \in E(c)}se$.
For $c \in \sC(G)$, we denote $\tilc = (1/l(c)) \cdot c$.
Each linear combinataion $\sum_{c \in \sC(G)}s_c  \tilc$
 with $\sum_{c \in \sC(G)}s_c = 1$ satisfies the condition
 in \cite[Lemma 4.1]{Shimomura_2014ergodic}.
Conversely, if a linear combination $\sum_{e \in E}s_e e$ satisfies
 the condition in \cite[Lemma 4.1]{Shimomura_2014ergodic},
 then there exists a linear combination $\sum_{c \in \sC(G)}s_c  \tilc$
 with $\sum_{c \in \sC(G)}s_c = 1$ that generates $\sum_{e \in E}s_e e$,
 for the proof, see \cite[Proposition 4.7]{Shimomura_2014ergodic}.
The set of linear combinations is denoted 
 $\Pcal(G) := \seb \sum_{c \in \sC(G)}s_c  \tilc \mid
 \sum_{c \in \sC(G)}s_c = 1\sen$.
Because $\fai_{m,n} :G_m \to G_n$ is a graph cover for each $m > n$,
 there exists a natual linear map $(\fai_{m,n})_* : \Pcal(G_m) \to \Pcal(G_n)$.
\begin{nota}\label{nota:deltan}
For each $n \bni$, we denote $\sC_n := \sC(G_n)$ and $\Delta_n := \Pcal(G_n)$.
For each $m > n$, we define an affine map as $\xi_{m,n} := (\fai_{m,n})_*|_{\Delta_m} : \Delta_m \to \Delta_n$.
For each $n \bni$ and $c \in \sC_n$, it follows that $\tilc \in \Delta_n$.
\end{nota}
\begin{nota}
We denote an inverse limit as follows:
\[\Deltainf := \seb (x_0,x_1,\dotsc) ~|~\myforall i \bni,~ x_i \in \Delta_i,~\text{and for all } m > n,~ \xi_{m,n}(x_m) = x_n \sen.\]
Further, $\Deltainf$ is endowed with product topology.
For each $n \bni$, we denote the projection as $\xi_{\infty,n} : \Deltainf \to \Delta_n$.
It is evident that $\xi_{\infty,n}$ is continuous.
For $\infty \ge k > j > i$, it follows that $\xi_{j,i} \circ \xi_{k,j} = \xi_{k,i}$.
\end{nota}
It is well known that $\sP_f$ is a compact, metrizable space.
By definition, $\Deltainf$ is also a compact, metrizable space.
%
%
%
\begin{nota}\label{nota:measure-inverse-limit-of-circuits}
Let $\mu \in \sP_f$.
We write $\bar{\mu} := \pstrzinf{\mu}$, where $\mu_n = \sum_{e \in E_n}\mu(U(e))e$ for all $n \bni$.
Then, it follows that $\bar{\mu} \in \Deltainf$ and $\mu_n \in \Delta_n$ for each $n \bni$.
Following \cite{Gambaudo_2006AlgTopMinCantSets},
 we denote the map $\mu \mapsto \bar{\mu}$ as $p_*$.
Thus, we have the map $p_* : \sP_f \to \Deltainf$.
Note that we can write $\bar{\mu} = \pstrzinf{\mu}$,
 where $\mu_n = \sum_{c \in \sC_n}s_c \tilc$.
\end{nota}
We have checked Proposition 3.2 of \cite{Gambaudo_2006AlgTopMinCantSets}
 for our case.
\begin{prop}\label{prop:description-of-measures-by-covers}
The map $p_* : \sP_f \to \Deltainf$ is an isomorphism.
\end{prop}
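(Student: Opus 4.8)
**The plan is to show that $p_*$ is a continuous affine bijection between compact metrizable spaces, hence a homeomorphism (an isomorphism of compact convex sets).** The map $p_*$ sends $\mu \in \sP_f$ to $\bar\mu = (\mu_0,\mu_1,\dotsc)$, where $\mu_n = \sum_{e \in E_n}\mu(U(e))\,e$. We already know from the preceding discussion that $\bar\mu \in \Deltainf$ and each $\mu_n \in \Delta_n$, so $p_*$ is well defined. It is clearly affine, since $\mu \mapsto \mu(U(e))$ is affine for each fixed edge $e$. Continuity is routine: a sequence $\mu^{(k)} \to \mu$ in the weak topology means $\mu^{(k)}(U) \to \mu(U)$ for each clopen $U$ (the sets $U(e)$ are clopen), so each coordinate $\mu^{(k)}_n \to \mu_n$, giving $p_*(\mu^{(k)}) \to p_*(\mu)$ in the product topology of $\Deltainf$. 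Since both $\sP_f$ and $\Deltainf$ are compact metrizable, it then suffices to prove that $p_*$ is a bijection.

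First I would establish \textbf{injectivity}. Suppose $p_*(\mu) = p_*(\nu)$. Then $\mu(U(e)) = \nu(U(e))$ for every $e \in E_n$ and every $n$. Because $\sO_E = \{\,U(e) \mid e \in E_n,\ n \bni\,\}$ is an open base for the topology of $X$ and, more importantly, generates the Borel $\sigma$-algebra, and because the $U(e)$ for fixed $n$ form (together with the identification via \cref{lem:inv-on-vertex}) a system from which all basic clopen sets are obtained by finite unions, two Borel measures agreeing on all of $\sO_E$ must coincide. Here I would invoke the uniqueness part of the extension discussion preceding \cref{lem:creation-of-measures}, which states that the values on $\sO_E$ determine the Borel measure uniquely.

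Next I would establish \textbf{surjectivity}, which I expect to be the main obstacle. Given an arbitrary $\bar\mu = (x_0,x_1,\dotsc) \in \Deltainf$, I must produce an invariant probability measure $\mu$ with $p_*(\mu) = \bar\mu$. The idea is to read off the edge-values: for $e \in E_n$, write $x_n = \sum_{e \in E_n} a_n(e)\,e$ and set $\mu(U(e)) := a_n(e)$. The compatibility condition $\xi_{m,n}(x_m) = x_n$ in the definition of $\Deltainf$ translates precisely into \cref{eqn:measure-edge}, because $\xi_{m,n} = (\fai_{m,n})_*|_{\Delta_m}$ pushes forward the edge-coefficients along $\fai_{m,n}$ (by \cref{prop:graph-hom-linear}, $(\fai_{m,n})_*$ sums the coefficients of edges mapping to a common edge). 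Moreover, each $x_n \in \Delta_n = \Pcal(G_n)$, so by the defining conditions of $\Pcal(G)$ the coefficients are non-negative, sum to $1$, and satisfy the Kirchhoff-type balance $\sum_{e \in E_{n,-}(v)} a_n(e) = \sum_{e \in E_{n,+}(v)} a_n(e)$, which is exactly \cref{eqn:measure-invariant}. With both \cref{eqn:measure-invariant} and \cref{eqn:measure-edge} in hand, \cref{lem:creation-of-measures} yields a genuine invariant measure $\mu$; normalization to a probability measure follows from $\sum_{e \in E_n} a_n(e) = 1$ together with \cref{lem:inv-on-vertex}, which gives $\sum_{v \in V_n}\mu(U(v)) = \sum_{e \in E_n}\mu(U(e)) = 1$.

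\textbf{The hard part} is verifying that the pointwise-defined $\mu$ on the base $\sO_E$ is genuinely a well-defined, countably additive premeasure before extension — in particular that the edge-values are consistent across different levels $n$ (an edge at level $n$ is the disjoint union of its preimage edges at level $m > n$, and their $\mu$-values must sum correctly). This consistency is exactly \cref{eqn:measure-edge}, which I obtain from the inverse-limit compatibility as above, so the technical burden is really just unwinding the definitions of $\xi_{m,n}$, $\Pcal(G_n)$, and $\Deltainf$ and matching them termwise against \cref{eqn:measure-invariant,eqn:measure-edge}. Once surjectivity and injectivity are secured, the continuous affine bijection $p_*$ between compact metrizable convex sets is automatically a homeomorphism (its inverse is continuous by compactness), completing the proof that $p_*$ is an isomorphism.
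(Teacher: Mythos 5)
Your proof is correct. Bear in mind, though, that the paper gives no proof of this proposition at all: it is stated as a verification of Proposition 3.2 of \cite{Gambaudo_2006AlgTopMinCantSets} for the present setting, and all proofs in this section are explicitly deferred to \cite[\S\S 4--6]{Shimomura_2014ergodic}, so there is no internal argument to compare yours against. Your route is the standard one that those sources follow: affinity and weak-continuity of $\mu \mapsto (\mu(U(e)))_e$ (clopen sets have negligible boundary, so evaluation on them is weakly continuous), injectivity from the uniqueness of the Borel extension of a premeasure defined on the base $\sO_E$, surjectivity by reading off the coefficients of $x_n \in \Delta_n$ and feeding \cref{eqn:measure-invariant} and \cref{eqn:measure-edge} (the latter being exactly the compatibility $\xi_{m,n}(x_m)=x_n$ unwound via \cref{prop:graph-hom-linear}) into \cref{lem:creation-of-measures}, and compactness to convert a continuous affine bijection into an affine homeomorphism. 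One small point you glossed over in the surjectivity step: the assignment $U(e) \mapsto a_n(e)$ must be well defined on the \emph{set} $\sO_E$, since conceivably $U(e) = U(e')$ for edges at different levels $n < m$. This is harmless: by (N--5) every $U(e)$ is non-empty, so the disjoint decomposition of $U(e)$ into the sets $U(e'')$ over the $\fai_{m,n}$-preimage edges $e''$ of $e$ shows that $U(e') = U(e)$ forces $e'$ to be the unique preimage edge, and then \cref{eqn:measure-edge} yields $a_m(e') = a_n(e)$; it is worth saying this explicitly, but it does not affect the validity of your argument.
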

\begin{nota}
Let $\seb x_{n_i} \sen_{i \bpi}$ be a sequence such that $n_1 < n_2 < \dotsb$, and $x_{n_i} \in \Delta_{n_i}$ for all $i \bpi$.
Suppose that, for all $m \bni$, $\lim_{i \to \infty}\xi_{n_i,m}(x_{n_i}) = y_m \in \Delta_m$.
Then, we denote $\lim_{i \to \infty}x_{n_i} := (y_0,y_1,\dotsc) \in \Deltainf$.
\end{nota}
Hereafter, we consider restrictions of the entire set $\Nonne$ on infinite subsets of $\Nonne$.
Let $\bN \subseteq \Nonne$ be an infinite subset.
Because the restricted sequence $\seb G_n \sen_{n \in \bN}$ of $\Gcal$ produces the same inverse limit $G_{\infty}$, it is convenient to consider such restrictions.
This change can be done for some calculations of the invariant measures
 without telescoping $\Gcal$ itself.
In \cite{Shimomura_2014ergodic}, we made a general theory taking sufficietly
 many circuits $\seb \Ccal_n \subseteq \sC_n \sen_{n \in \bN}$
 for sufficiently many $\bN \subseteq \Nonne$
 to express particular measures for certain aim.
However, in this paper, we need not this general argument.
Thus, we consider $\bN = \Nonne$ and $\Ccal_n = \sC_n$ ($n \bni$).
In the same way, we define
\[\sC_{\infty} := \seb \seb c_{n} \sen_{n \bni} \mid c_n \in \sC_n
 \myforall n \bni, \lim_{n \to \infty}\tilc_{n} \text{ exists} \sen. \]
For each $x = \seb c_n \sen_{n \bni} \in \sC_{\infty}$,
 we denote $\bar{x} = \lim_{n \to \infty}\tilc_{n} \in \Deltainf$.
Further, we write $\bar{\sC}_{\infty} := \seb \bar{x}~|~x \in \sC_{\infty}\sen$.
Note that because every $\Delta_n$ ($n \bni$) is compact,
 it is obvious that,
 for each sequence of circuits $\seb c_{n} \sen_{n \bni}$
 with $c_n \in \sC_n$ for all $n \bni$,
 there exists a subsequence $\seb n_i \sen_{i \bpi}$
 such that there exists a $\lim_{i \to \infty}\tilc_{n_i}$.
Note that this does not directly imply $\sC_{\infty} \nekuu$.

As shown by the next theorem, each ergodic measure is expressed as a limit of
 a sequence of circuits.
\begin{thm}\label{thm:ergodic-measures-are-from-essential-circuits}
Let $\mu \in \sP_f$ be an ergodic measure.
Let $\bN \subseteq \Nonne$ be an infinite subset.
Suppose that $\Ccal$ is a system of circuits that expresses $\mu$.
Then, it follows that $\bar{\mu} \in \ovCcalinf$.
\end{thm}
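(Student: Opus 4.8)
The plan is to push everything through the affine isomorphism $p_* : \sP_f \to \Deltainf$ of \cref{prop:description-of-measures-by-covers} and to exploit that $\bar{\mu}$ is an \emph{extreme point}. Since $\sP_f$ is a Choquet simplex and $\mu$ is ergodic, $\mu$ is extreme in $\sP_f$; as $p_*$ is an affine isomorphism, $\Deltainf$ is again a Choquet simplex and $\bar{\mu} = p_*(\mu)$ is extreme. I would write the hypothesis that $\Ccal$ expresses $\mu$ as $\mu_n = \sum_{c \in \Ccal_n} s_n(c)\tilc$ with $s_n(c) \ge 0$ and $\sum_{c \in \Ccal_n} s_n(c) = 1$ (the normalization holds because every $\tilc$ and every $\mu_n$ lie in $\Delta_n = \Pcal(G_n)$, on which the total edge weight is $1$). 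The goal then becomes: select a single circuit $c_n \in \Ccal_n$ for each $n \in \bN$ so that $\lim_{n \to \infty}\tilc_n = \bar{\mu}$ in $\Deltainf$, which exhibits $\{c_n\}_{n \in \bN} \in \Ccalinf$ with $\bar{\mu} \in \ovCcalinf$.

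\textbf{Lifting the circuits to a product space.} Viewing each $s_n$ as a probability distribution on the finite set $\Ccal_n$, I would set $P := \prod_{n \in \bN}\Delta_n$, a compact metrizable convex subset of $\prod_{n \in \bN}\Real^{E_n}$ containing $\Deltainf$ as a closed convex subset. For $N \in \bN$ and $c \in \Ccal_N$ define $\iota_N(c) \in P$ by recording $\xi_{N,m}(\tilc)$ in the coordinates $m \le N$ (and an arbitrary fixed value in coordinates $m > N$, which becomes irrelevant in the limit), and form the Borel probability measure $\Lambda_N := \sum_{c \in \Ccal_N} s_N(c)\,\delta_{\iota_N(c)}$ on $P$. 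By weak$^*$ compactness of the probability measures on $P$, some subsequence of $\{\Lambda_N\}$ converges to a limit $\lambda$. For fixed $j < k$ in $\bN$ and all $N \ge k$ one has $\xi_{k,j}(\xi_{N,k}(\tilc)) = \xi_{N,j}(\tilc)$, so each such $\Lambda_N$ is carried by the closed compatibility set $\{z : \xi_{k,j}(z_k) = z_j\}$; letting $N \to \infty$ and intersecting over all pairs shows $\lambda$ is supported on $\Deltainf$. Moreover, for fixed $m$ and all $N \ge m$ the $m$-th coordinate of the barycenter of $\Lambda_N$ is $\xi_{N,m}\!\big(\sum_c s_N(c)\tilc\big) = \xi_{N,m}(\mu_N) = \mu_m$; passing to the weak$^*$ limit coordinatewise shows the barycenter of $\lambda$ equals $\bar{\mu}$.

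\textbf{Extremality forces concentration.} Thus $\lambda$ is a probability measure on the Choquet simplex $\Deltainf$ whose barycenter is the extreme point $\bar{\mu}$; in a simplex the only representing measure of an extreme point is its Dirac mass, so $\lambda = \delta_{\bar{\mu}}$. Because this holds for \emph{every} weak$^*$ subsequential limit, the whole sequence satisfies $\Lambda_N \to \delta_{\bar{\mu}}$. Unwinding the definition of $\Lambda_N$, this is exactly the statement that for every $M \in \bN$ and every $\ep > 0$,
\[ s_N\big(\{\, c \in \Ccal_N : \norm{\xi_{N,m}(\tilc) - \mu_m} < \ep \ \text{ for all } m \le M \,\}\big) \longrightarrow 1 \quad (N \to \infty,\ N \in \bN), \]
so in particular this ``good'' set is nonempty for all large $N$. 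A diagonal selection now finishes: choosing schedules $M_n \uparrow \infty$ and $\ep_n \downarrow 0$ slowly enough that the good set at level $n$ for $(M_n,\ep_n)$ is nonempty, pick $c_n$ in it (arbitrary $c_n$ for the finitely many small $n$). Then $\lim_{n}\xi_{n,m}(\tilc_n) = \mu_m$ for each $m$, i.e. $\lim_n \tilc_n = \bar{\mu}$, giving $\{c_n\} \in \Ccalinf$ and $\bar{\mu} \in \ovCcalinf$.

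\textbf{Where the difficulty lies.} The main obstacle is the \emph{full}-limit requirement built into $\Ccalinf$: an unstructured (say independent) choice of circuits produces only subsequential limits, and working one coordinate at a time is useless because $\mu_m$ is generally not extreme in $\Delta_m$. The extremality of $\bar{\mu}$ must therefore be invoked \emph{jointly across all levels at once}, and this is precisely what the representing-measure computation on the inverse limit $\Deltainf$ delivers; the resulting concentration on finitely many coordinates simultaneously is exactly what allows a single coherent circuit to be selected at every level.
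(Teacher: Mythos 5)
Your proposal is correct, but there is nothing in-paper to measure it against: the paper explicitly omits all proofs in the subsection containing \cref{thm:ergodic-measures-are-from-essential-circuits}, deferring them to \cite[\S\S 4--6]{Shimomura_2014ergodic}. Taken as a self-contained replacement, your argument is sound, and its skeleton is the right one for this statement: ergodicity is used only through extremality of $\mu$ in $\sP_f$, transported to extremality of $\bar{\mu}$ in $\Deltainf$ by the affine isomorphism $p_*$ of \cref{prop:description-of-measures-by-covers}; the lifted measures $\Lambda_N = \sum_{c \in \Ccal_N} s_N(c)\,\delta_{\iota_N(c)}$ are supported (in the limit) on the compatible set, which is affinely homeomorphic to $\Deltainf$, and have barycenter $\bar{\mu}$; and the representing-measure rigidity then forces $\Lambda_N \to \delta_{\bar{\mu}}$. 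Two remarks on the details. First, you do not need any Choquet-simplex structure: Bauer's theorem says that for \emph{any} compact convex metrizable set, an extreme point admits only its Dirac mass as a representing measure, so the appeal to simplexes can be dropped. Second, your conclusion is actually stronger than the theorem -- the $s_N$-mass of circuits whose projections to the first $M$ levels are $\ep$-close to $\bar{\mu}$ tends to $1$ -- which is exactly the concentration that makes the full-limit requirement in $\Ccalinf$ (rather than a subsequential limit, cf.\ \cref{lem:subsequence-converge}) attainable by a diagonal selection; this also explains the paper's remark that no further subsequences of $\bN$ are needed. An approach through Birkhoff generic points, which is the other natural route for an ergodicity statement, would be awkward here because the theorem concerns an \emph{arbitrary} prescribed system $\Ccal$ expressing $\mu$ and arbitrary graph coverings, in which circuits overlap and orbit-tracing decompositions are not canonical; your convexity argument bypasses this entirely.

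One small loose end, easily closed: your final statement gives $\lim_{n}\xi_{n,m}(\tilc_n) = \mu_m$ only for $m \in \bN$, whereas the paper's definition of $\lim_n \tilc_n \in \Deltainf$ requires convergence for every $m \in \Nonne$. For $m \notin \bN$ choose $m' \in \bN$ with $m' > m$; then continuity of $\xi_{m',m}$ gives $\xi_{n,m}(\tilc_n) = \xi_{m',m}\bigl(\xi_{n,m'}(\tilc_n)\bigr) \to \xi_{m',m}(\mu_{m'}) = \mu_m$, so the selected sequence indeed lies in $\Ccalinf$ and $\bar{\mu} \in \ovCcalinf$.
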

We note that, by this theorem,
 we need not take further subsequences of $\bN$ to get the convergent sequences.
In \cite[Proposition 3.3]{Gambaudo_2006AlgTopMinCantSets}, Gambaudo and Martens showed that if the number of loops in $X_n$ is uniformly bounded by $k$, then $f$ has at most $k$ ergodic invariant probability measures. They reported their findings in the context of Cantor minimal continuous surjection. Nevertheless, the proof is still valid in our case of zero-dimensional proximal systems.
We have checked this fact in the context of this section:
\begin{thm}[{\cite[Theorem 6.2]{Shimomura_2014ergodic}}]\label{thm:finite-ergodicity}
Let $\bN$ be an infinite subset of $\Nonne$.
Suppose that $\abs{\sC_n} \le k$ for all $n \in \bN$.
Then, $\sP_f$ has at most $k$ ergodic measures.
\end{thm}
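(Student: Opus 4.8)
The plan is to argue by contradiction, using \cref{thm:ergodic-measures-are-from-essential-circuits} to realize each ergodic measure as a genuine limit of \emph{single} normalized circuits (not merely as a limit of convex combinations of circuits), after which a two-fold counting argument forces two of the supposed measures to collide. Suppose toward a contradiction that $\sP_f$ has at least $k+1$ distinct ergodic measures $\mu_0,\mu_1,\dotsc,\mu_k$. Since $p_* : \sP_f \to \Deltainf$ is an isomorphism by \cref{prop:description-of-measures-by-covers}, it is in particular injective, so the images $\bar{\mu}_0,\dotsc,\bar{\mu}_k \in \Deltainf$ are pairwise distinct. This distinctness in $\Deltainf$ is what I will contradict.

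Next I would use the hypothesis that $\Ccal$ expresses every invariant probability measure: in particular it expresses each ergodic $\mu_j$. Because $\mu_j$ is ergodic, \cref{thm:ergodic-measures-are-from-essential-circuits} applies and gives $\bar{\mu}_j \in \ovCcalinf$. Unwinding the definition of $\ovCcalinf$, for each $j$ there is a sequence $\{c^{(j)}_n\}_{n \in \bN} \in \Ccalinf$ with $c^{(j)}_n \in \Ccal_n$ for all $n \in \bN$ and $\lim_{n\to\infty} \widetilde{c^{(j)}_n} = \bar{\mu}_j$ in $\Deltainf$, where $\widetilde{c^{(j)}_n}$ denotes the normalization $(1/l(c^{(j)}_n))\,c^{(j)}_n$. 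Here $\sharp \Ccal_n \le k$ enters: for each fixed $n \in \bN$ the $k+1$ circuits $c^{(0)}_n,\dotsc,c^{(k)}_n$ all lie in the set $\Ccal_n$ of size at most $k$, so by the pigeonhole principle there exist indices $i<j$ (depending on $n$) with $c^{(i)}_n = c^{(j)}_n$.

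The second round of counting is over levels. Since there are only finitely many pairs $(i,j)$ with $0 \le i < j \le k$, some single pair $(i_0,j_0)$ satisfies $c^{(i_0)}_n = c^{(j_0)}_n$ for infinitely many $n \in \bN$; collect these into an infinite subset $\bN' \subseteq \bN$. Along $\bN'$ the normalized circuits coincide identically, $\widetilde{c^{(i_0)}_n} = \widetilde{c^{(j_0)}_n}$, hence for every fixed $m \bni$ the sequences $\xi_{n,m}(\widetilde{c^{(i_0)}_n})$ and $\xi_{n,m}(\widetilde{c^{(j_0)}_n})$ agree for all $n \in \bN'$. Being subsequences of the convergent sequences that define $\bar{\mu}_{i_0}$ and $\bar{\mu}_{j_0}$, they converge to the $m$-th coordinates of $\bar{\mu}_{i_0}$ and $\bar{\mu}_{j_0}$ respectively; equality at every level $m$ yields $\bar{\mu}_{i_0} = \bar{\mu}_{j_0}$, contradicting the distinctness established in the first paragraph. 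Therefore $\sP_f$ has at most $k$ ergodic measures.

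The main obstacle, and really the only nontrivial ingredient, is the appeal to \cref{thm:ergodic-measures-are-from-essential-circuits}: it is precisely what guarantees that each ergodic measure is the limit of one chain of normalized circuits rather than only a limit of convex combinations of them. If the ergodic measures were known merely as elements of the $\Delta_n$ expressed by $\Ccal$, no per-level collision would be available and the counting argument would fail; the ergodicity-to-essential-circuits passage is what converts the uniform bound $\sharp \Ccal_n \le k$ into the bound on the number of ergodic measures. The remaining steps are a routine double pigeonhole together with the continuity and coordinatewise nature of convergence in $\Deltainf$.
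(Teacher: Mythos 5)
Your proof is correct, and it is essentially the intended argument: the paper itself omits the proof (citing Theorem 6.2 of \cite{Shimomura_2014ergodic}), but the machinery it sets up in that section --- the isomorphism $p_*$ of \cref{prop:description-of-measures-by-covers} and the passage from ergodicity to single circuit chains in \cref{thm:ergodic-measures-are-from-essential-circuits} --- is exactly what you use, followed by the standard double pigeonhole on the sets $\Ccal_n$ and on the finitely many index pairs. No gaps: coordinatewise convergence in $\Deltainf$ along the common subsequence $\bN'$ does force $\bar{\mu}_{i_0} = \bar{\mu}_{j_0}$, which contradicts injectivity of $p_*$.
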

\section{Bratteli--Vershik representations}
\label{sec:Bratteli-Vershik-representations}
In the previous section, we presented a combinatorial approach for representing 
 every continuous surjective zero-dimensional system by our graph covering.
\begin{nota}
By $(V,E)$, we refer to not only a finite directed graph but also a Bratteli diagram.
To avoid ambiguity,
 for a finite directed graph $G = (V,E)$, we write $V = V(G)$ and $E = E(G)$,
 and the Bratteli diagram is simply expressed as $(V,E)$.
\end{nota}
From this section, we return to homeomorphic zero-dimensional systems.
Long before our construction of graph coverings,
 Herman, Putnam, and Skau \cite{HERMAN_1992OrdBratteliDiagDimGroupTopDyn}
 had shown that
 every Cantor essentially minimal homeomorphism is represented by
 an essentially simple ordered Bratteli--Vershik model.
In \cite[\S 1]{GJERDE_2000BratteliVershikModelCantorMinSysAppToeplFlows}, Gjerde and Johansen described
 how Bratteli diagrams are employed to obtain models for
 Cantor minimal homeomorphisms.
Nevertheless, because we treat proximal systems that are not minimal,
 we also have to return to the original
 \cite{HERMAN_1992OrdBratteliDiagDimGroupTopDyn}.
We follow Gjerde and Johansen
 \cite[\S 1]{GJERDE_2000BratteliVershikModelCantorMinSysAppToeplFlows}
 to describe the Bratteli--Vershik models for zero-dimensional
 essentially minimal systems.
\begin{defn}
A \textit{Bratteli diagram} is an infinite directed graph $(V,E)$, where $V$ is the vertex
set and $E$ is the edge set.
These sets are partitioned into non-empty disjoint finite sets 
$V = V_0 \cup V_1 \cup V_2 \cup \dotsb$ and $E = E_1 \cup E_2 \cup \dotsb$,
 where $V_0 = \seb v_0 \sen$ is a one-point set.
Each $E_n$ is a set of edges from $V_{n-1}$ to $V_n$.
Therefore, there exist two maps $r,s : E \to V$, such that $r:E_n \to V_n$
 and $s : E_n \to V_{n-1}$ for $n \bpi$, i.e., the range map and the source map respectively.
Moreover, the conditions $s^{-1}(v) \nekuu$ for all $v \in V$ and
$r^{-1}(v) \nekuu$ for all $v \in V \setminus V_0$ are assumed.
We say that $u \in V_{n-1}$ is connected to $v \in V_{n}$ if there
 exists an edge $e \in E_n$ such that $s(e) = u$ and $r(e) = v$.
Unlike the case of graph coverings,
 multiple edges between $u$ and $v$ are permitted.
The \textit{rank $K$} of a Bratteli diagram is defined as
 $K := \liminf_{n \to \infty}\num{V_n}$,
 where $\num{V_n}$ is the number of elements in $V_n$.
\end{defn}
\begin{nota}
In this section, we also consider a type of directed graph and its covering maps.
Directed graphs are denoted as $G_n$ $(n \bni)$, while the sets of vertices
 are denoted as $V(G_n)$ and the sets of edges are denoted as $E(G_n)$.
In this section,
 we use the notation $V_n$ and $E_n$ only for Bratteli diagrams.
\end{nota}
%
%
Let $(V,E)$ be a Bratteli diagram and $m < n$ be non-negative integers.
We define
\[E_{m,n} := 
 \seb p \mid p \text{ is a path from a } u \in V_m \text{ to a } v \in V_n \sen.\]
Then, we can construct a new Bratteli diagram $(V',E')$ as follows:
\[ V' := V_0 \cup V_1 \cup \dotsb \cup V_m \cup V_n \cup V_{n+1} \cup \dotsb \]
\[ E' := E_1 \cup E_2 \cup \dotsb \cup E_m \cup E_{m,n} \cup E_{n+1} \cup \dotsb. \]
The source map and the range map are also defined naturally.
This procedure is called {\it telescoping}.

\begin{defn}
Let $(V,E)$ be a Bratteli diagram such that
$V = V_0 \cup V_1 \cup V_2 \cup \dotsb$ and $E = E_1 \cup E_2 \cup \dotsb$
 are the partitions,
 where $V_0 = \seb v_0 \sen$ is a one-point set.
Let $r,s : E \to V$ be the range map and the source map, respectively.
We say that $(V,E,\le)$ is an {\it ordered}\/ Bratteli diagram if
 a partial order $\le$ is defined on $E$ such that 
 $e, e' \in E$ is comparable if and only if $r(e) = r(e')$.
 Thus, we have a linear order on each set $r^{-1}(v)$ with $v \in \Vp$.
The edges $r^{-1}(v)$ are numbered from $1$ to $\num{r^{-1}(v)}$.
\end{defn}
Let $n > 0$ and $e = (e_n,e_{n+1},e_{n+2},\dotsc), e'=(e'_n,e'_{n+1},e'_{n+2},\dotsc)$ be cofinal paths from the vertices of $V_{n-1}$, which might be different.
We get the lexicographic order $e < e'$ as follows:
\[\myif k \ge n \text{ is the largest number such that } e_k \ne e'_k, \mythen e_k < e'_k.\]%
\begin{defn}
Let $(V,E,\le)$ be an ordered Bratteli diagram.
Let $E_{\max}$ and $E_{\min}$ denote the sets of maximal and minimal
edges, respectively.
An infinite path is maximal (minimal) if all the edges constituting the
path are elements of $E_{\max}$ ($E_{\min}$).
\end{defn}
We provide the following notation
 that does not appear in
 \cite{GJERDE_2000BratteliVershikModelCantorMinSysAppToeplFlows}.
\begin{nota}
Let $v \in V\setminus V_0$.
Because $r^{-1}(v)$ is linearly ordered, 
 we denote the maximal edge $e(v,\max) \in r^{-1}(v)$
 and the minimal edge $e(v,\min) \in r^{-1}(v)$.
\end{nota}

\begin{defn}
As in \cite{HERMAN_1992OrdBratteliDiagDimGroupTopDyn},
 an ordered Bratteli diagram $(V,E)$ is said to be
 \textit{essentially simple}
 if there exists a unique infinite path
 $p_{\max} = (e_{1,\max},e_{2,\max},\dotsc)$
 with $e_{i,\max} \in E_{\max} \cap E_i$ for all $i \bpi$,
 and there exists a unique infinite path
 $p_{\min} = (e_{1,\min},e_{2,\min},\dotsc)$
 with $e_{i,\min} \in E_{\min} \cap E_i$ for all $i \bpi$.
\end{defn}

\begin{defn}[Vershik map]
Let $(V,E,\le)$ be an essentially simple ordered Bratteli diagram
 with the unique maximal path $p_{\max}$ and the unique minimal path
 $p_{\min}$.
Let 
\[E_{0,\infty} := \seb (e_1,e_2,\dotsc) \mid r(e_i) = s(e_{i+1})
 \myforall i \ge 1 \sen,\]
with the subspace topology of the product space $\prod_{i = 1}^{\infty}E_i$.
We can define the {\it Vershik map}\/ $\phi : E_{0,\infty} \to E_{0,\infty}$ as follows:

\noindent If $e = (e_1,e_2,\dotsc) \ne p_{\max}$,
 then there exists the least $n \ge 1$ such that 
 $e_n$ is not maximal in $r^{-1}(r(e_n))$.
Then, we can select the least $f_n > e_n$ in $r^{-1}(r(e_n))$.
Let $v_{n-1} = s(f_n)$.
Then, it is easy to get the unique least path $(f_1,f_2,\dotsc,f_{n-1})$
 from $v_0$ to $v_{n-1}$.
Now, we can define
\[\phi(e) := (f_1,f_2,\dotsc,f_{n-1},f_n,e_{n+1},e_{n+2},\dotsc)\],
and separately, we define $\phi(p_{\max}) = p_{\min}$.
The map $\phi : E_{0,\infty} \to E_{0,\infty}$ is called the {\it Vershik map}.
The system $(E_{0,\infty},\phi)$ is a homeomorphic zero-dimensional topological
 dynamical system (see \cite{HERMAN_1992OrdBratteliDiagDimGroupTopDyn}).
The system $(E_{0,\infty},\phi)$ is called
 the \textit{essentially simple Bratteli--Vershik model}
 determined by $(V,E,\le)$,
 and $(X,f)$ \textit{has essentially simple Bratteli--Vershik model}
 if $(X,f)$ is topologically conjugate to $(E_{0,\infty},\phi)$.
\end{defn}
The next theorem is a part of
 \cite[Theorem 4.6]{HERMAN_1992OrdBratteliDiagDimGroupTopDyn}.
\begin{thm}[{\cite[Theorem 4.6]{HERMAN_1992OrdBratteliDiagDimGroupTopDyn}}]
\label{thm:essentially-simple-representation}
The essentially simple Bratteli--Vershik model
 is a homeomorphic essentially minimal zero-dimensional system.
Conversely, a homeomorphic essentially minimal zero-dimensional system has 
 an essentially simple Bratteli--Vershik model.
In the Bratteli--Vershik model, both $p_{\max}$ and $p_{\min}$ are
 points in the unique minimal set.
\end{thm}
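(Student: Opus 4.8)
The plan is to prove the two directions separately and then locate $p_{\max}$ and $p_{\min}$ inside the minimal set.

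\emph{From diagram to system.} First I would verify that $(E_{0,\infty},\phi)$ is a homeomorphic zero-dimensional system. The space $E_{0,\infty}$ is a closed subspace of the product $\prod_{i\ge 1}E_i$ of finite discrete sets, hence compact, metrizable and zero-dimensional. The defining formula for $\phi$ is continuous off the single point $p_{\max}$, and the prescription $\phi(p_{\max})=p_{\min}$ restores continuity there: as $e\to p_{\max}$ the least non-maximal coordinate tends to infinity, so $\phi(e)$ agrees with $p_{\min}$ on longer and longer initial segments. Constructing $\phi^{-1}$ symmetrically (decrease the least non-minimal coordinate, reset the lower tail to the maximal continuation) shows that uniqueness of $p_{\max}$ and $p_{\min}$, i.e. essential simplicity, is exactly what upgrades $\phi$ from a continuous bijection off a countable set to a homeomorphism. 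For essential minimality I would analyse the cofinal (tail) structure: repeatedly applying $\phi$ makes the first $k$ edges run through all admissible values, so they equal the minimal ones infinitely often, giving $p_{\min}\in\omega(e)$ for \emph{every} $e$. Verifying in addition that $p_{\min}$ is uniformly recurrent, I conclude that $\overline{\{\phi^n(p_{\min})\mid n\ge 0\}}$ is a minimal set contained in every $\omega$-limit set, hence the \emph{unique} minimal set.

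\emph{From system to diagram.} Fix the unique minimal set $Y\subseteq X$ and a point $y\in Y$. As $X$ is a Cantor space I choose decreasing clopen neighbourhoods $B_0=X\supseteq B_1\supseteq B_2\supseteq\dotsb$ with $\bigcap_n B_n=\{y\}$. The decisive point is that the first-return map to each $B_n$ is everywhere defined with \emph{bounded} return time: since $Y$ is the unique minimal set, the $\omega$-limit set of every $x\in X$ contains $Y\ni y$, so every forward orbit re-enters the neighbourhood $B_n$; then the disjoint clopen sets $\{x\in B_n\mid r_n(x)=k\}$ $(k\ge 1)$ cover the compact set $B_n$, so only finitely many are non-empty. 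Hence the first-return Kakutani--Rohlin partition $\mathcal{T}_n$ over $B_n$ is a finite clopen partition into finitely many towers of bounded height. Intersecting each $\mathcal{T}_n$ with a fixed generating sequence of finite clopen partitions and telescoping, I arrange that $\mathcal{T}_{n+1}$ refines $\mathcal{T}_n$, that the bases nest with $B_{n+1}\subseteq B_n$ so that each $\mathcal{T}_{n+1}$-column crosses $B_n$ at finitely many heights, and that $\bigvee_n\mathcal{T}_n$ generates the topology.

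\emph{Reading off the ordered diagram and the conjugacy.} I let $V_n$ be the set of towers of $\mathcal{T}_n$, and for each $\mathcal{T}_n$-tower I record, from bottom to top, the successive $\mathcal{T}_{n-1}$-towers met at its crossings of $B_{n-1}$; each crossing is an edge of $E_n$, ordered by the height at which it occurs, yielding an ordered Bratteli diagram. The itinerary map $X\to E_{0,\infty}$ sending $x$ to the sequence of edges recording which tower and which crossing $x$ occupies at each level is a continuous bijection (the partitions separate points, $X$ is compact) that carries $f$ to the Vershik map, because climbing one level in a tower is applying $f$ and the ``carry'' at a roof is precisely the reset to the minimal continuation. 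The all-minimal path threads the bottoms, hence corresponds to the unique point of $\bigcap_n B_n=\{y\}$; the all-maximal path threads the roofs, whose $f$-image lies in $\bigcap_n B_n$, hence corresponds to $f^{-1}(y)$; each converges to a single point, giving uniqueness and so essential simplicity. Since $Y$ is $f$-invariant, both $y$ and $f^{-1}(y)$ lie in $Y$, which under the conjugacy is the unique minimal set, yielding the final assertion that $p_{\min}$ and $p_{\max}$ are points of the minimal set.

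\emph{Main obstacle.} The step I expect to demand the most care is the finiteness of the levels $V_n$ in the merely essentially minimal (not minimal) setting: a priori the return time to a shrinking neighbourhood of $y$ might be unbounded, and the construction collapses if some $V_n$ becomes infinite. The resolution above---every orbit re-enters $B_n$ because its $\omega$-limit set engulfs the unique minimal set, combined with compactness of $B_n$---is what keeps each level finite, and carrying out simultaneously the refinement so that the partitions both generate the topology and preserve the nesting and the edge order is the most delicate bookkeeping.
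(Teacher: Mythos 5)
The paper itself does not prove this theorem; it imports it from Herman--Putnam--Skau, so your attempt has to be measured against the standard proof and against the closely related machinery the paper does develop in \cref{sec:Bratteli-Vershik-representations} (KR-partitions, itinerary conjugacies, and the ``reduced form'' lemmas). Your overall architecture --- Vershik-map analysis for one direction; nested clopen bases, first-return Kakutani--Rohlin towers with bounded return times, and an itinerary map for the converse --- is exactly the standard one, and most of it is sound. But there is one genuine gap, in the diagram-to-system direction. Both of your key claims there --- continuity of $\phi$ at $p_{\max}$ (``$\phi(e)$ agrees with $p_{\min}$ on longer and longer initial segments'') and essential minimality (``the first $k$ edges run through all admissible values, so they equal the minimal ones infinitely often, giving $p_{\min}\in\omega(e)$'') --- silently use a lemma you never state or prove: \emph{for every $k$ there is an $N$ such that any path consisting entirely of minimal edges and reaching up to a level $\ge N$ agrees with $p_{\min}$ in its first $k$ edges}. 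What a Vershik carry at level $n$ actually produces is the minimal path into the vertex $s(f_n)$, and \emph{every} vertex of $V_{n-1}$ has its own unique minimal path from $V_0$; nothing in the definitions forces these to begin like $p_{\min}$. Indeed your stated mechanism is false as written: the first $k$ coordinates need not run through all admissible values (which level-$k$ vertex appears after a carry is dictated by the higher levels), so ``equal the minimal ones infinitely often'' only yields minimal paths into \emph{varying} vertices, not the initial segment of $p_{\min}$. The fix is a compactness/diagonalization argument from essential simplicity: if arbitrarily long all-minimal paths disagreed with $p_{\min}$ in their first $k$ edges, a subsequence would converge to a second infinite minimal path, a contradiction. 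This is precisely what the paper proves, for its own purposes, in \cref{lem:maximal-path-start-from:essentially-minimal,lem:minimal-path-start-from:essentially-minimal}; your proof cannot do without it, since both the continuity of $\phi$ at $p_{\max}$ and the statement $p_{\min}\in\omega(e)$ fail for diagrams with two infinite minimal paths.

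Two smaller glosses in the converse direction. First, for the towers $f^j\bigl(\{x\in B_n \mid r_n(x)=k\}\bigr)$ to \emph{partition} $X$ (not merely to be defined), every point's \emph{backward} orbit must enter $B_n$; you only argue forward re-entry. The fix is symmetric --- the $\alpha$-limit set of every point contains the unique minimal set --- but it has to be said, as it is a second, independent use of essential minimality. Second, the final assertion of the theorem concerns $p_{\max},p_{\min}$ in \emph{any} Bratteli--Vershik representation, whereas you verify it only for the diagram you construct; the general statement should instead be extracted from your first direction ($p_{\min}\in\omega(e)\subseteq M$ for every minimal set $M$, hence $p_{\min}\in M$, and $p_{\max}=\phi^{-1}(p_{\min})\in M$ by invariance). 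Incidentally, this also shows your detour through uniform recurrence of $p_{\min}$ is unnecessary: once $p_{\min}$ lies in every $\omega$-limit set, every minimal set contains it, and since distinct minimal sets are disjoint, uniqueness is immediate.
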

In \cite{DOWNAROWICZ_2008FiniteRankBratteliVershikDiagAreExpansive},
 Downarowicz and Maass introduced the topological rank for
 a homeomorphic Cantor minimal system.
We define the same for (homeomorphic) zero-dimensional essentially minimal systems as follows:
\begin{defn}[Topological Rank]\label{defn:rank}
Let $(X,f)$ be a homeomorphic essentially minimal zero-dimensional system.
Then, \textit{the topological rank of $(X,f)$} is $1 \le K \le \infty$
 if it has an essentially simple Bratteli--Vershik model
 determined by an essentially simple ordered Bratteli diagram with rank $K$,
 and $K$ is the minimum of such numbers.
\end{defn}%
%
%
\subsection{Graph coverings of Kakutani--Rohlin type}
We now characterize the homeomorphic zero-dimensional essentially minimal
 systems by means of graph coverings.
\begin{defn}\label{defn:gf8}
A finite directed graph $G$ is called a \textit{generalized figure 8} if
 there exist \textit{center} $v_0 \in V(G)$ and distinct
 circuits $c_1,c_2,\dotsc,c_r$ with periods $p(t)$ $(1 \le t \le r)$.
The circuits are denoted as
 $c_t = (v_{t,0} = v_0, v_{t,1}, v_{t,2}, \dotsc, v_{t,p(t)} = v_0)$
 for $1 \le t \le r$, and they satisfy the following requirements:
\itemb
\item $E(G) = \bigcup_{1 \le t \le r}E(c_t)$, and
\item $V(c_s) \cap V(c_t) = \seb v_0 \sen$ for each $s \ne t$.
\itemn
The term ``generalized figure 8'' is abbreviated as \textit{GF8}.
In the case of GF8, every $c \in \sC(G)$ is considered as a walk from
 the center.
Further, $r$ denotes the \textit{rank} of $G$.
Note that the number of vertices of $G$ is counted as 
 $1+ \sum_{1 \le t \le r}(p(t)-1)$.
\end{defn}
Let $G_0$ be the singleton graph that corresponds to
 the top vertex of the Bratteli diagram.
We shall construct each $G_n$ $(n \ge 1)$ as GF8 with the center
 $v_{n,0}$ and distinct circuits
 $c_{n,1}, c_{n,2},\dotsc, c_{n,r_n}$.
We express the period of each circuit $c_{n,t}$ with $1 \le t \le r_n$
 as $p(n,t) \ge 1$.
Thus, we write
\[c_{n,t}
 = (v_{n,t,0} = v_{n,0}, v_{n,t,1}, v_{n,t,2}, \dotsc, v_{n,t,p(n,t)}
 = v_{n,0})\]
for $n \ge 1$ and $1 \le t \le r_n$.
\begin{defn}\label{defn:KR-cov}
We say that a covering
 $\covrepa{G}{\fai}$ is of
 \textit{Kakutani--Rohlin type} if $G_n$ is a GF8 and
 $\fai_{n+1}(v_{n+1,0}) = v_{n,0}$ for every $n \bni$.
We abbreviate a covering of Kakutani--Rohlin type as a \textit{KR-covering}.
A KR-covering has \textit{rank} $1 \le K \le \infty$
 if $\liminf_{n \to \infty}r_n = K$.
Note that for each $n \bni$ and $t$ with $1 \le t \le r_{n+1}$,
 we can write
\[\fai_{n+1}(c_{n+1,t}) = c_{n,t,1} c_{n,t,2} \dotsb c_{n,t,w(n,t)},\]
 where $c_{n,t,j}$'s are circuits of $G_n$,
 i.e., $c_{n,t,j} = c_{n,i(n,t,j)}$ for some
 $1 \le i(n,t,j) \le r_n$.
Furthermore, because of the \pdirectionalitys of a covering, it follows that
 $c_{n,t,1}$'s are independent of $t$.
\begin{nota}
Hereafter, we assume that $c_{n,t,1} = c_{n,1}$ for all $n,t$, i.e.,
 every upper circuit winds the lower circuits with $c_{n,1}$ first.
\end{nota}
We note that for an $n \bni$, $\fai_n$ is \bidirectionals if and only if
 $c_{n,t,w(n,t)}$'s are independent of $t$.
\end{defn}
%
%
%
%
\begin{defn}\label{defn:KR-covering-model}
If $\covrepa{G}{\fai}$ be a KR-covering,
 then the inverse limit $G_{\infty} = (X,f)$
 is called a \textit{KR-covering model} determined by
 the KR-covering.
If a continuous surjective zero-dimensional system $(X,f)$
 is topologicallly conjugate to $G_{\infty}$ for some
 KR-covering $\covrepa{G}{\fai}$, then we say that 
 $(X,f)$ \textit{has a KR-covering model}.
\end{defn}

\begin{rem}
As in the case of essentially simple Bratteli--Vershik models,
 some topological rank can be defined by the usage of 
 ranks in KR-covering models.
Later, in \cref{thm:coincidence-of-ranks-essentially-simple-BV-and-KR-cov},
 we find that the two topological ranks coincide
 if we consider homeomorphic essentially minimal zero-dimensional
 systems.
\end{rem}

\begin{rem}\label{lem:exist-kr-partition}
Let $\covrepa{G}{\fai}$ be a \bidirectionals KR-covering.
We are able to 
 consider a refining sequence of Kakutani--Rohlin partitions
 (see \cite[Lemma 4.1]{HERMAN_1992OrdBratteliDiagDimGroupTopDyn}).
For each $n \ge 0$ and $c_{n,t}$ with $1 \le t \le r_n$, 
 let
 $c_{n,t} = (v_{n,t,0}=v_{n,0},v_{n,t,1}, \dotsc, v_{n,t,{p(n,t)}} = v_{n,0})$.
We write $B_t = f^{-1}(U(v_{n,t,1})) \subset U(v_{n,0})$.
Then, $f^{k}(B_t) = U(v_{n,t,k})$ for $1 \le k < p(n,t)$.
We write $\xi_t = \seb f^k(B_t) \mid 0 \le k < p(n,t) \sen$.
The zero-dimensional system $(X,f)$ is partitioned into
 Kakutani--Rohlin partition $\Xi_n :=\bigcup_{1 \le t \le r_n} \xi_t$.
It is clear that $\Xi_{n+1}$ refines $\Xi_n$ for all $n \ge 0$.
\end{rem}
%
%
%
%
%
Hereafter, in this section, we show that
 a \bidirectionals KR-covering is linked with
 an essentially simple ordered Bratteli diagram.
First, we construct an essentially simple ordered Bratteli diagram
 from a \bidirectionals KR-covering.
\begin{lem}\label{lem:KR-covering->Bratteli-diagram}
Let
 $\covrepa{G}{\fai}$ be a
 \bidirectionals KR-covering.
An essentially simple ordered Bratteli diagram $(V,E)$
 is constructed as follows:
 $V_n = \sC(G_n)$ $(n \bni)$; if for $n \bni$,
\[\fai_{n+1}(c_{n+1,t})
 = c_{n,t,1} c_{n,t,2} \dotsb c_{n,t,w(n,t)}
 \text{ for each } 1 \le t \le r_{n+1},\]
 an edge that belongs to $E_{n+1} \subset E$
 is made from each $c_{n,t,j}$ to $c_{n+1,t}$
 for $1 \le j \le w(n,t)$ that is numbered by $j$.
In particular, the ranks do not differ.
The Bratteli--Vershik system is topologically conjugate to $G_{\infty}$.
Moreover, it follows that the ranks coincide.
\end{lem}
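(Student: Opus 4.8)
\textit{The plan} is to establish three things in turn: that the prescription yields a well-defined ordered Bratteli diagram, that it is essentially simple, and that its Bratteli--Vershik system is topologically conjugate to $G_{\infty} = (X,f)$; the last is where the real work lies.

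First I would check well-definedness. The structural fact I would exploit is that in a GF8 every non-central vertex lies on a \emph{unique} circuit and (since $E(G)=\bigcup_t E(c_t)$ and distinct circuits meet only at the center) has a unique outgoing edge; hence any closed walk based at the center $v_{n,0}$ decomposes uniquely as a concatenation of the circuits $c_{n,1},\dotsc,c_{n,r_n}$. Applying this to $\fai_{n+1}(c_{n+1,t})$, which is closed at $v_{n,0}$ because $\fai_{n+1}(v_{n+1,0})=v_{n,0}$, produces the decomposition $c_{n,t,1}\dotsb c_{n,t,w(n,t)}$ and the numbered edges into $c_{n+1,t}$. Since $p(n+1,t)\ge 1$ forces $w(n,t)\ge 1$ we get $r^{-1}(c_{n+1,t})\neq\emptyset$, and edge-surjectivity of $\fai_{n+1}$ forces every circuit of $G_n$ to occur in some such decomposition, giving $s^{-1}(c_{n,t})\neq\emptyset$. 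Thus $(V,E,\le)$ with $V_n=\sC(G_n)$ is a genuine ordered Bratteli diagram.

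For essential simplicity I would read off both extremal paths from the directionality hypotheses. By \pdirectionalitys the first circuit $c_{n,t,1}=c_{n,1}$ is independent of $t$, so the number-$1$ edge into every vertex has source $c_{n,1}$; this forces a unique minimal infinite path, through $c_{1,1},c_{2,1},\dotsc$. By \bidirectionalitys (via the criterion recorded in \cref{defn:KR-cov}) the last circuit $c_{n,t,w(n,t)}$ is likewise independent of $t$, so the top-numbered edge into every vertex has a common source, and the analogous argument yields a unique maximal path. Hence the diagram is essentially simple and carries a Vershik map $\phi$ on $E_{0,\infty}$. For the conjugacy itself I would then use the refining sequence of Kakutani--Rohlin partitions $\Xi_n=\bigcup_t \xi_t$ from \cref{lem:exist-kr-partition}, whose atoms are the tower levels $f^k(B_t)$ ($0\le k<p(n,t)$), equal to $U(v_{n,t,k})$ for $k\ge 1$. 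The decisive combinatorial observation is that the tower $c_{n+1,t}$ refines into the consecutive sub-towers $c_{n,t,1},\dotsc,c_{n,t,w(n,t)}$ exactly as dictated by the decomposition above; consequently a point's position ``height $k_n$ in tower $t_n$'' at level $n$ determines, and is determined by, a length-$n$ path $(e_1,\dotsc,e_n)$, where $e_n$ records which sub-block of tower $t_n$ the point occupies. An induction on $n$ (the number of paths into $c_{n,t}$ equals $p(n,t)$, matching the number of atoms) shows this is a bijection $h\colon X\to E_{0,\infty}$; since $h$ carries the clopen cylinders of $E_{0,\infty}$ onto the atoms of the $\Xi_n$, which generate the topology by \cref{rem:partition-covering}, $h$ is a homeomorphism of compact zero-dimensional spaces.

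The step I expect to be the main obstacle is the intertwining $h\circ f=\phi\circ h$, matching ``move up one level in the tower'' with the Vershik carry. The bookkeeping I would record is that a point sits at the top of its level-$m$ tower precisely when $e_1,\dotsc,e_m$ are all maximal; so if $N$ is the least index with $e_N$ not maximal, then $x$ sits at the top of block $j_N<w(N-1,t_N)$ of the level-$N$ tower, and applying $f$ moves it to the bottom of block $j_N+1$. At level $N$ this increments $e_N$ to the next edge $f_N$; below $N$ it resets to the minimal path to $s(f_N)$; above $N$ it changes nothing---which is exactly the definition of $\phi$. This verifies the identity off the single maximal point $h^{-1}(p_{\max})$, and since $f$, $\phi$, $h$ are continuous and that set is dense, it holds everywhere, with $f$ sending the maximal point to the minimal point as $\phi(p_{\max})=p_{\min}$ requires. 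Managing this carry across the three ranges of levels is the only genuinely intricate part of the argument.
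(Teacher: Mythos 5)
Your proposal follows essentially the same route as the paper's proof, and in fact supplies the verifications the paper omits. Essential simplicity is argued identically: \pdirectionalitys makes the number-one edge into every vertex have the common source $c_{n,1}$, forcing a unique minimal infinite path, and \bidirectionalitys gives the dual statement for the maximal path. Your conjugacy is the same correspondence the paper constructs, only phrased through the Kakutani--Rohlin towers of \cref{lem:exist-kr-partition} (tower index and height at each level) rather than the paper's direct device, which takes the least $N$ with $v_n \ne v_{n,0}$ for all $n \ge N$, uses the unique circuit through $v_n$ and the position of $v_{n+1}$ inside $c_{n+1}$ to select the edge $e_{n+1}$, and fills in minimal edges below level $N$; these encode exactly the same data. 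Where the paper writes that the Bratteli-diagram axioms are ``straightforward'' and leaves the homeomorphism and the intertwining $h \circ f = \phi \circ h$ ``to the readers,'' your decomposition-of-closed-walks argument, the path-counting induction, and the carry computation are precisely those omitted checks, and they are correct.

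The one step that does not survive scrutiny is the final appeal to density. You deduce $h \circ f = \phi \circ h$ at the maximal point from continuity plus the claim that $X \setminus h^{-1}(p_{\max})$ is dense; that set is dense if and only if $h^{-1}(p_{\max})$ is not isolated, and this can fail. The lemma is stated for arbitrary \bidirectionals KR-coverings (the Cantor hypothesis enters only at \cref{nota:Cantor}), so $X$ may be a single periodic orbit: for instance, take every $G_n$ to be one circuit of period $2$ and every $\fai_{n+1}$ the obvious isomorphism; then $X$ consists of two isolated points while $E_{0,\infty}$ still has two paths, and the density argument says nothing about the maximal point. (One can check that this is the \emph{only} bad case --- if $X$ is infinite, essential minimality forces $h^{-1}(p_{\max})$, which lies in the unique minimal set, to be non-isolated --- but the lemma covers the finite case too.) The repair is a one-line direct check rather than a new idea: the coordinates of $h^{-1}(p_{\max})$ are the vertices $c_{n,\max}(l(c_{n,\max})-1)$, whose edges compatible with the inverse limit all end at the centers, so $f$ carries $h^{-1}(p_{\max})$ to the center sequence $x_0$, and $h(x_0) = p_{\min} = \phi(p_{\max})$ by construction. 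With that substitution your proof is complete.
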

\begin{proof}
To check that $(V,E)$ is an essentially simple ordered Bratteli diagram, we have to show that the maximal infinite path from $V_0$ is unique and
 the minimal infinite path from $V_0$ is unique.
Let $n \bpi$.
For each $v \in V_n$ of the Bratteli diagram, there is a corresponding
circuit $c_{n,t}$ of $G_n$.
Then, $c_{n-1,t,1}$ is independent of $t$.
Let $v_{n-1,1}$ be the corresponding vertex of $V_{n-1} \subset V$ of the 
 Bratteli diagram, i.e.,
 there exists a minimal edge $e \in E_n$ from $v_{n-1,1}$ to $v$.
Let $e_{n}$ be the minimal edge from $v_{n-1,1}$ to $v_{n,1}$.
Then, $(e_{1},e_{2},\dotsc)$ is the unique infinite minimal path
 from $V_0$.
Because of the \bidirectionalitys condition, the existence and uniqueness
 of the infinite
 maximal path from $V_0$ also follow by the same argument.
It is straightforward to check the remaining conditions of the ordered Bratteli diagrams.
To check the last statement, let $(X,f) = G_{\infty}$.
Let $x_0 := (v_{0,0},v_{1,0},v_{2,0},\dotsc) \in X$, where $v_{n,0}$ is the center of $G_n$.
Let $x = (v_0,v_1,v_2,\dotsc) \in X \setminus \seb x_0 \sen$.
Let $N$ be the least positive integer such that for all $n \ge N$,
 $v_n \ne v_{n,0}$.
Let $n \ge N$.
Let $c_n \in \sC(G_n)$ be a unique circuit such that $v_n \in V(c_n)$.
We have considered that $c_n \in V_n \subset V$, a vertex of the Bratteli diagram.
Observe that, because $v_{n+1} \in V(c_{n+1})$ is already determined,
 by the cover $\fai_{n+1} : G_{n+1} \to G_n$, one can identify the unique edge $e_{n+1}$ from $c_n$ to $c_{n+1}$.
Because $v_N \in V(c_N)$ is determined, by the cover $\fai_N : G_N \to G_{N-1}$, the unique $c_{N-1} \in V_{N-1}$ and the unique edge $e_N \in E_N$ from $c_{N-1}$ to $c_N$ that may not be minimal is determined.
For all $0 \le i < N-1$, we choose $c_i := c_{i, 1}$ and the edge $e_{i+1}$ from $c_{i}$ to $c_{i+1}$ is the minimal edge.
Thus, we have constructed $h(x) := (e_1,e_2,\dotsc) \in E_{0,\infty}$.
We denote $h(x_0) := p_{\min} \in E_{0,\infty}$.
Recall that we have denoted for all $n \bni$ and $1 \le t \le r_{n+1}$,
\[\fai_{n+1}(c_{n+1,t}) = c_{n,t,1} c_{n,t,2} \dotsb c_{n,t,w(n,t)}.\]
By the \bidirectionalitys condition, $c_{n,t,w(n,t)}$ is independent of $t$.
We denote this as $c_{n,\max}$.
There exists a maximal edge $e_{n,\max}$ from $c_{n-1,\max}$ to $c_{n,\max}$ for all $n \bpi$.
Let $v_{n,\max} := c_{n,\max}(l(c_{n,\max})-1) \in V(G_n)$ for all $n \bpi$.
Then, it follows that we can define
 $x_1 : = (v_0,v_{1,\max},v_{2,\max},\dotsc) \in X$.
It is obvious that $h(x_1) = p_{\max}$.
We leave it to the readers to check that $h : X \to E_{0,\infty}$
 is a homeomorphism and $h \circ f = \phi \circ h$.
\end{proof}

We now show that the converse is true, i.e.,
 we construct a \bidirectionals KR-covering from an essentially simple
 ordered Bratteli diagram.
For this purpose, we need a few lemmas and a definition.
The readers might skip the following detailed arguments.
\begin{lem}\label{lem:maximal-path-start-from:essentially-minimal}
Suppose that $(V,E)$ is an essentially simple ordered Bratteli diagram.
Let $p_{\max} = (e_{1,\max},e_{2,\max},\dotsc)$
 be the infinite maximal path from $V_0$.
Let $v_{i,\max} = r(e_{i,\max})$ $(i \bpi)$.
Let $n \bpi$.
Then, there exists an $n_0 > n$ such that if
 $(e_{n+1,\max},e_{n+2,\max},\dotsc,e_{m,\max})$
 with $m \ge n_0$ is maximal, then
 $s(e_{n+1,\max}) = v_{n,\max}$.
\end{lem}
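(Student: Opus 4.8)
The plan is to argue by contradiction, exploiting the defining property of an essentially simple ordered Bratteli diagram: the infinite maximal path from $V_0$ is unique. Suppose no such $n_0$ exists. Then for every $n_0 > n$ there are some $m \ge n_0$ and a maximal finite path from a vertex of $V_n$ to a vertex of $V_m$ whose source vertex in $V_n$ is different from $v_{n,\max}$. In particular, there exist maximal finite paths starting in $V_n$ of arbitrarily large length whose source vertex is not $v_{n,\max}$.

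Since $V_n$ is finite, by the pigeonhole principle I can fix a single vertex $w \in V_n$ with $w \ne v_{n,\max}$ such that maximal finite paths of arbitrarily large length start at $w$. I would then organize all maximal finite paths starting at $w$ into a tree: the nodes at depth $k$ are the maximal paths $(f_{n+1}, \dotsc, f_{n+k})$ with $s(f_{n+1}) = w$, and a node is the child of its one-step-shorter prefix (a prefix of a maximal path is again a maximal path with the same source). Because each edge set $E_i$ is finite this tree is finitely branching, and because arbitrarily long such paths exist it has nodes at every depth. K\"onig's lemma then yields an infinite branch, that is, an infinite maximal path $(f_{n+1}, f_{n+2}, \dotsc)$ with $s(f_{n+1}) = w$.

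It remains to extend this infinite path backward to $V_0$. For each vertex $v \in \Vp$ the fiber $r^{-1}(v)$ is nonempty and linearly ordered, so the maximal edge $e(v,\max)$ exists; tracing these maximal edges backward from $w$ (take $e(w,\max)$, pass to its source in $V_{n-1}$, and repeat) produces a maximal finite path $(g_1, \dotsc, g_n)$ from $V_0$ to $w$. Concatenating gives an infinite maximal path $(g_1, \dotsc, g_n, f_{n+1}, f_{n+2}, \dotsc)$ from $V_0$ whose level-$n$ vertex is $w \ne v_{n,\max}$, so it differs from $p_{\max}$. This contradicts the uniqueness of the infinite maximal path, completing the argument.

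The main obstacle is the compactness step, namely passing from ``maximal finite paths of unbounded length'' to ``a single infinite maximal path.'' The finiteness of each $E_i$ and of $V_n$ is exactly what makes K\"onig's lemma applicable (one could equally phrase this as a diagonal subsequence extraction, or as compactness of $E_{0,\infty}$); the backward extension to $V_0$ is then routine once $e(v,\max)$ is available for every $v \in \Vp$.
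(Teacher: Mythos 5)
Your proof is correct and follows essentially the same route as the paper: assume the claim fails, extract (via compactness --- K\"onig's lemma in your version, a subsequence argument in the paper's) an infinite maximal path starting at a vertex of $V_n$ other than $v_{n,\max}$, prepend a maximal path from $V_0$ to that vertex, and contradict the uniqueness of $p_{\max}$. Your write-up is in fact more careful than the paper's, which only stabilizes the first edge and leaves the diagonal extraction and the backward extension implicit.
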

\begin{proof}
Suppose that the claim fails.
Then, there exists an infinite sequence $n < m(1) < m(2) < \dotsb$ of integers
 and corresponding sequence $p_k = (e_{n+1,k},e_{n+2,k},\dotsc,e_{m(k),k})$
 $(k \bpi)$
 of maximal paths such that $s(e_{n+1,k}) \ne v_{n,\max}$.
Taking a subsequence if necessary,
 we can assume that $e_{n+1,k}$ is independent
 of $k$.
Thus, we get an infinite maximal path $(e'_{n+1},e'_{n+2},\dotsc)$
 such that $s(e'_{n+1}) \ne v_{n,\max}$.
It is easy to get a maximal path from $V_0$ to $v_{n,0}$.
Connecting these paths, we get an infinite maximal path from $V_0$ that is not
 $p_{\max}$, which is a contradiction.
\end{proof}
An identical argument shows the following:
\begin{lem}\label{lem:minimal-path-start-from:essentially-minimal}
Suppose that $(V,E)$ is an essentially simple ordered Bratteli diagram.
Let $p_{\min} = (e_{1,\min},e_{2,\min},\dotsc)$
 be the infinite minimal path from $V_0$.
Let $v_{i,\min} = r(e_{i,\min})$ $(i \bpi)$.
Let $n \bpi$.
Then, there exists an $n_0 > n$ such that if
 $(e_{n+1},e_{n+2},\dotsc,e_m)$ with $m \ge n_0$ is minimal, then
 $s(e_{n+1}) = v_{n,\min}$.
\end{lem}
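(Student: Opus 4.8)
The plan is to mirror verbatim the proof of \cref{lem:maximal-path-start-from:essentially-minimal}, interchanging the roles of ``maximal'' and ``minimal'' and of $p_{\max}$ and $p_{\min}$. I would argue by contradiction, assuming that for every candidate $n_0 > n$ the conclusion fails; this yields an infinite strictly increasing sequence $n < m(1) < m(2) < \dotsb$ together with minimal finite paths $p_k = (e_{n+1,k}, e_{n+2,k}, \dotsc, e_{m(k),k})$ whose initial source satisfies $s(e_{n+1,k}) \ne v_{n,\min}$.

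The first key step is to upgrade this family of finite minimal paths into a single \emph{infinite} minimal path $(e'_{n+1}, e'_{n+2}, \dotsc)$ still satisfying $s(e'_{n+1}) \ne v_{n,\min}$. Since each edge set $E_i$ is finite, I would do this by a diagonal extraction (equivalently, by K\"onig's lemma applied to the finitely branching tree of minimal finite paths starting at level $n+1$ whose first edge has source $\ne v_{n,\min}$): pass to a subsequence along which $e_{n+1,k}$ is constant, then a further subsequence along which $e_{n+2,k}$ is constant, and so on, taking the stabilized edges as $e'_{n+1}, e'_{n+2}, \dotsc$. Each $e'_j$ is minimal and consecutive edges match up, so this is a genuine infinite minimal path.

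The second step is to close it off to a path from $V_0$. Writing $u := s(e'_{n+1}) \in V_n$, I would build a finite minimal path from $V_0$ to $u$ by following minimal incoming edges backward: every vertex in $V \setminus V_0$ has $r^{-1}(\cdot) \ne \emptyset$, hence carries a distinguished minimal edge $e(\cdot,\min)$, and tracing these from $u$ down to the singleton $V_0$ produces the desired finite minimal path. Concatenating it with $(e'_{n+1}, e'_{n+2}, \dotsc)$ gives an infinite minimal path emanating from $V_0$. Because its edge at level $n+1$ has source $u \ne v_{n,\min} = s(e_{n+1,\min})$, this path is distinct from $p_{\min}$, contradicting the essential simplicity of $(V,E)$ (uniqueness of the infinite minimal path from $V_0$).

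I expect the only real subtlety to be the first step: making the passage from finitely many arbitrarily long finite minimal paths to one infinite minimal path rigorous. The diagonal/K\"onig argument is routine but must be stated carefully, since the $m(k)$ tend to infinity and one must verify that the stabilized sequence is defined at every level and remains minimal. Everything else is bookkeeping identical to the maximal case.
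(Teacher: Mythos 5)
Your proposal is correct and matches the paper's proof: the paper disposes of this lemma by noting that an argument identical to that of \cref{lem:maximal-path-start-from:essentially-minimal} applies, and your contradiction argument (subsequence/K\"onig extraction of an infinite minimal path with $s(e'_{n+1}) \ne v_{n,\min}$, then prepending a finite minimal path from $V_0$ built from the edges $e(\cdot,\min)$ to contradict uniqueness of $p_{\min}$) is exactly that argument with ``maximal'' and ``minimal'' interchanged. Your explicit diagonal extraction merely spells out what the paper compresses into ``taking a subsequence if necessary.''
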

\begin{defn}
Let $(V,E)$ be an essentially simple ordered Bratteli diagram.
We have written the maximal path as
 $p_{\max} = (e_{1,\max},e_{2,\max},\dotsc)$
 with $e_{i,\max} \in E_{\max} \cap E_i$ for all $i \bpi$,
 and the minimal path as 
 $p_{\min} = (e_{1,\min},e_{2,\min},\dotsc)$
 with $e_{i,\min} \in E_{\min} \cap E_i$ for all $i \bpi$.
Let $v_{n,\max} := r(e_{n,\max})$ for all $n \bni$,
 and $v_{n,\min} := r(e_{n,\min})$ for all $n \bni$.
Applying
 \cref{lem:maximal-path-start-from:essentially-minimal,lem:minimal-path-start-from:essentially-minimal},
 and telescoping,
 we get an essentially simple ordered Bratteli diagram
 with the additional property such that
\itemb
\item for all $m \bpi$ and $v \in V_m$, it follows that 
 $v_{m-1,\max} = s(e(v,\max))$ and
\item for all $m \bpi$ and $v \in V_m$, it follows that 
 $v_{m-1,\min} = s(e(v,\min))$.
\itemn
We say that an essentially minimal ordered Bratteli diagram has
 \textit{reduced form}
 if it satisfies the above condition.
\end{defn}
Hereafter, we assume that an essentially simple ordered Bratteli diagram
 $(V,E)$ has reduced form.
\begin{nota}
For each $n \bpi$ and $v \in V_n$, we denote 
 $P(v) := 
 \seb p = (e_1,e_2,\dotsc,e_n) \mid
 p \text{ is a path from } V_0 \text{ to } v \sen$.
Then, by the lexicographic order, $P(v)$ is linearly ordered.
We order $P(v) = \seb p_{v,1},p_{v,2},\dotsc p_{v,w(v)} \sen$ with
 $p_{v,i} < p_{v,j}$ if and only if $i < j$.
We define for each $p \in P(v)$,
 $[p] := \seb q \in E_{0,\infty} \mid
 \text{ the first } n \text{ edges of } q \text{ coincide with } p \sen$.
We denote $P_n := \bigcup_{v \in V_n} P(v)$.
For each $n \bpi$, we denote by $p_{n,\min}$ ($p_{n,\max}$)
 the finite path that consists of the first $n$ edges
 of $p_{\min}$ ($p_{\max}$).
For each $n \bpi$, we denote $D_{n,0} := \bigcup_{v \in V_n}[p_{v,1}]$.
For each $n \bpi$, each $v \in V_n$, and $0 < i < w(v)$,
 we denote $D_{n,v,i} := [p_{v,i+1}]$.
For $n \bpi$, we get a partition by closed and open sets
\[\Dcal_n := D_{n,0} \cup \seb D_{n,v,i} \mid v \in V_n, 1 \le i < w(v) \sen.\]
We denote $\Dcal_0 := \seb E_{0,\infty} \sen$.
\end{nota}
\begin{lem}\label{lem:first-n-coincide}
Let $n \bpi$ and $v \in V_{n+1}$.
The path that consists of the first $n$ edges of $p_{v,1}$
 coincides with $p_{n,\min}$ ($p_{n,\max}$).
\end{lem}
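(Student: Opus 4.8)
The plan is to prove the minimal assertion in full; the parenthetical maximal case is entirely symmetric, with $p_{v,w(v)}$, $e(\cdot,\max)$, and $v_{\cdot,\max}$ replacing $p_{v,1}$, $e(\cdot,\min)$, and $v_{\cdot,\min}$ throughout. Recall that the lexicographic order on $P(v)$ compares paths from the highest edge index downward, so the minimum $p_{v,1}$ is built greedily from the top: one first makes the top edge as small as possible, then the next, and so on. First I would exhibit an explicit candidate, namely
\[ q := (e_{1,\min}, e_{2,\min}, \dotsc, e_{n,\min}, e(v,\min)), \]
and then show $q = p_{v,1}$; reading off its first $n$ edges gives $p_{n,\min}$ at once.

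The first step is to verify that $q$ is a genuine path from $V_0$ to $v$. For $1 \le i < n$ the incidence $r(e_{i,\min}) = s(e_{i+1,\min})$ is merely the assertion that $p_{\min}$ is a path, rephrased via $v_{i,\min} = r(e_{i,\min})$ together with the reduced-form identity $s(e(v_{i+1,\min},\min)) = v_{i,\min}$ (using $e_{i+1,\min} = e(v_{i+1,\min},\min)$, which holds because $e_{i+1,\min}$ is by definition the minimal edge into its own range). The only genuinely new incidence is at the top: since $v \in V_{n+1}$, applying the reduced-form condition with $m = n+1$ gives $s(e(v,\min)) = v_{n,\min} = r(e_{n,\min})$, so $q$ connects.

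The second step is to show $q$ is the lexicographic minimum of $P(v)$. I would take an arbitrary $p = (f_1, \dotsc, f_{n+1}) \in P(v)$ with $p \ne q$ and let $k$ be the largest index at which $p$ and $q$ differ. If $k = n+1$, both $k$-th edges lie in $r^{-1}(v)$. If $k \le n$, then $p$ and $q$ agree at index $k+1$, so $s(f_{k+1}) = s(q_{k+1})$ and hence $r(f_k) = r(q_k) = v_{k,\min}$; thus again the two $k$-th edges share a common range $w$. In either case $q$'s $k$-th edge is the minimal edge $e(w,\min)$ of $r^{-1}(w)$ (it equals $e(v,\min)$ when $k=n+1$, and $e_{k,\min} = e(v_{k,\min},\min)$ when $k \le n$), so $q$'s $k$-th edge is $\le f_k$, and being distinct from $f_k$ it is strictly smaller. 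Therefore $q < p$. This proves $q = p_{v,1}$, whose first $n$ edges are $(e_{1,\min},\dotsc,e_{n,\min}) = p_{n,\min}$.

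I do not anticipate a serious obstacle; the argument is essentially bookkeeping. The one point demanding care is the role of the reduced-form hypothesis: it is precisely what forces the source of the minimal edge into any vertex of $V_m$ to lie on the global minimal path at $v_{m-1,\min}$, and without it the locally minimal path into $v$ need not coincide with $p_{\min}$. Keeping the index shift straight ($v \in V_{n+1}$ versus the first $n$ edges), and checking that agreement of the higher edges really pins down the common range vertex in the comparison, are the only places where one must be attentive.
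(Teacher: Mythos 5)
Your proof is correct, and it is exactly the argument the paper has in mind: the paper's own ``proof'' is the single sentence that the claim is straightforward from the reduced form, and your construction of the candidate path $(e_{1,\min},\dotsc,e_{n,\min},e(v,\min))$ together with the largest-index-of-disagreement comparison is precisely the bookkeeping being left to the reader. In particular, your identification of the reduced-form hypothesis as the step that pins $s(e(v,\min))$ to $v_{n,\min}$ is the intended content of the lemma.
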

\begin{proof}
Because we have assumed the reduced form,
 the proof is straightforward.
\end{proof}
\begin{lem}\label{lem:bratteli-to-covering}
We get the following:
\enumb
\item\label{item:bratteli-to-cov:a} for $n \bpi$ and $v \in V_n$,
 $\phi([p_{v,i}]) = [p_{v,i+1}]$ for all $1 \le i < w(v)$,
\item\label{item:bratteli-to-cov:b} for $n \bpi$,
 $\bigcup_{v \in V_n}\phi([p_{v,w(v)}]) = \bigcup_{v \in V_n}[p_{v,1}]$,
 and
\item\label{item:bratteli-to-cov:c} for $n \bpi$,
 $D_{n+1,0} \subset [p_{n,\min}]$, and
\item\label{item:bratteli-to-cov:d} $\bigcup_{n \bni} \Dcal_n$
 generates the topology of $E_{0,\infty}$.
\enumn
\end{lem}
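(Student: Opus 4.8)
The plan is to read off each statement directly from the combinatorial description of the Vershik map $\phi$, using the reduced form together with \cref{lem:first-n-coincide}. The observation driving parts \ref{item:bratteli-to-cov:a} and \ref{item:bratteli-to-cov:b} is that for $q \in [p_{v,i}]$ the least index $k$ at which an edge of $q$ fails to be maximal is already determined by the first $n$ edges, hence is constant on the whole cylinder. Consequently $\phi$ acts on $[p_{v,i}]$ by rewriting a fixed initial block of length $k \le n$ while fixing every coordinate beyond $k$, so that $\phi$ carries one cylinder onto another; the same principle, applied to all-maximal initial segments, will give \ref{item:bratteli-to-cov:b}.

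For \ref{item:bratteli-to-cov:a}, fix $v \in V_n$ and $1 \le i < w(v)$. Since $p_{v,i}$ is not the maximal path into $v$, not all of its edges are maximal, so the least non-maximal index $k$ satisfies $k \le n$. For every $q \in [p_{v,i}]$ the Vershik rule replaces $(e_1,\dotsc,e_k)$ by $(f_1,\dotsc,f_{k-1},f_k)$ and leaves $e_{k+1},e_{k+2},\dotsc$ unchanged, where $f_k$ is the least edge above $e_k$ in $r^{-1}(r(e_k))$ and $(f_1,\dotsc,f_{k-1})$ is minimal into $s(f_k)$. The resulting initial segment $(f_1,\dotsc,f_k,e_{k+1},\dotsc,e_n)$ is again a path into $v$, and because the order on $P(v)$ compares paths at the largest index where they differ, this segment is exactly the immediate successor of $p_{v,i}$, namely $p_{v,i+1}$. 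As the block of length $k$ is fixed on the cylinder and the coordinates beyond $n$ remain free, we obtain $\phi([p_{v,i}]) = [p_{v,i+1}]$; I would confirm that no path lies strictly between the two by appealing to the minimality built into the Vershik successor.

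For \ref{item:bratteli-to-cov:b}, observe that $\bigcup_{v \in V_n}[p_{v,w(v)}]$ is precisely the set of paths whose first $n$ edges are all maximal, while $\bigcup_{v \in V_n}[p_{v,1}] = D_{n,0}$ is the set whose first $n$ edges are all minimal. If $q$ belongs to the former and $q \ne p_{\max}$, then its least non-maximal index exceeds $n$, so the Vershik rule rebuilds the first $n$ coordinates as a minimal initial segment and $\phi(q) \in D_{n,0}$; the exceptional point is covered by $\phi(p_{\max}) = p_{\min} \in D_{n,0}$. This yields $\bigcup_v \phi([p_{v,w(v)}]) \subseteq D_{n,0}$, and the reverse inclusion follows from the symmetric computation for $\phi^{-1}$, whose least non-minimal index on $D_{n,0}$ likewise exceeds $n$, forcing $\phi^{-1}(D_{n,0})$ into the all-maximal set. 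Part \ref{item:bratteli-to-cov:c} is then immediate: by \cref{lem:first-n-coincide} the first $n$ edges of $p_{v,1}$ equal $p_{n,\min}$ for every $v \in V_{n+1}$, so $[p_{v,1}] \subseteq [p_{n,\min}]$ and the union $D_{n+1,0}$ is contained in $[p_{n,\min}]$.

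The remaining part \ref{item:bratteli-to-cov:d} is where the real work lies, since $\Dcal_n$ records the non-minimal cylinders $[p_{v,i}]$ $(i \ge 2)$ individually but lumps all minimal cylinders into the single set $D_{n,0}$. It suffices to show that the clopen algebra generated by $\bigcup_n \Dcal_n$ contains every cylinder $[p]$ with $p \in P_n$, since these cylinders generate the topology of $E_{0,\infty}$. Non-minimal cylinders already lie in $\Dcal_n$, so the point is to recover each minimal cylinder $[p_{v,1}]$. I would refine it to level $n+1$ by writing $[p_{v,1}]$ as the finite disjoint union of the $[p_{v,1}\cdot e]$ over edges $e$ with $s(e)=v$, and then invoke the reduced form: an extension $p_{v,1}\cdot e$ is a minimal path at level $n+1$ only when $v = v_{n,\min}$ and $e$ is the minimal edge into $r(e)$. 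Thus if $v \ne v_{n,\min}$ every extension is a non-minimal level-$(n+1)$ cylinder, so $[p_{v,1}]$ is a finite union of members of $\Dcal_{n+1}$; and if $v = v_{n,\min}$, then $[p_{v,1}] = [p_{n,\min}]$ splits as $D_{n+1,0}$ together with the non-minimal extensions, which again all belong to $\Dcal_{n+1}$. Either way $[p_{v,1}]$ lies in the generated algebra, completing \ref{item:bratteli-to-cov:d}. The main obstacle I anticipate is exactly this last recovery step: one must use the reduced form precisely to guarantee that the minimal paths at level $n+1$ extend only the single minimal path $p_{n,\min}$, so that the lumped set $D_{n,0}$ does not obstruct the separation of points.
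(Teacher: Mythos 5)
Your proof is correct and is essentially the paper's argument with the details filled in: \cref{item:bratteli-to-cov:a} comes from the Vershik rule rewriting only a fixed initial block on each cylinder, \cref{item:bratteli-to-cov:c} from \cref{lem:first-n-coincide}, and \cref{item:bratteli-to-cov:d} from the fact that every level-$n$ cylinder is a finite union of members of $\Dcal_{n+1}$, i.e.\ that $\Dcal_{n+1}$ refines the level-$n$ cylinder partition, which is exactly the content of the paper's terse ``\cref{item:bratteli-to-cov:c} implies \cref{item:bratteli-to-cov:d}''. The only divergence is cosmetic: for \cref{item:bratteli-to-cov:b} the paper argues from \cref{item:bratteli-to-cov:a} together with the fact that $\phi$ is a homeomorphism (so the images of the non-maximal cylinders exhaust the complement of $D_{n,0}$), whereas you compute $\phi$ and $\phi^{-1}$ on the all-maximal and all-minimal cylinders directly; both are valid.
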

\begin{proof}
For a fixed $n \bpi$, $\seb [p] \mid p \in P_n \sen$ is a partition 
 by the closed and open sets of $E_{0,\infty}$.
By the definition of the Vershik map,
 \cref{item:bratteli-to-cov:a} follows.
We recall that $\phi$ is a homeomorphism.
Thus, \cref{item:bratteli-to-cov:b} follows.
Further, \cref{item:bratteli-to-cov:c} follows from \cref{lem:first-n-coincide}.
Evidently, \cref{item:bratteli-to-cov:c} implies \cref{item:bratteli-to-cov:d}.
\end{proof}
%
%
For each $n \bni$, we construct a finite directed graph $G_n$ such that
 $V(G_n) := \Dcal_n$, and $(u,v) \in E(G_n)$
 if and only if $\phi(u) \cap v \nekuu$.
Thus, in $G_n$, we get circuits
\[(D_{n,0},D_{n,v,1},D_{n,v,2},\dotsc,D_{n,v,w(v) -1},D_{n,0})~~
 \text{ for each } v \in V_n.\]
In particular, the rank of $G_n$ is equal to $\abs{V_n}$.
We remark that all vertices $v \in V_n$ with $w(v) = 1$ are merged
 into the circuit $(D_{n,0},D_{n,0})$.
Other circuits are disjoint except at $D_{n,0}$.
For each $p = (e_1,e_2,\dotsc,e_{n+1}) \in P_{n+1}$, evidently,
 $[p] \subseteq [(e_1,e_2,\dotsc,e_n)]$.
We also note that,
 by \cref{item:bratteli-to-cov:c} of \cref{lem:bratteli-to-covering},
\[D_{n+1,0} \subseteq [p_{n,\min}] \subseteq D_{n,0}.\]
Thus, $\Dcal_{n+1,0}$ refines $\Dcal_{n,0}$.
We define a covering map $\fai_{n+1} : G_{n+1} \to G_n$ by
 $\fai(u) = v$ ($u \in V(G_{n+1}), v \in V(G_n)$) if and only if
 $u \subseteq v$.
To check the \bidirectionalitys condition,
 we only have to check at $D_{n+1,0}$.
The calculation
\[ \phi(D_{n+1,0}) \subseteq \phi([p_{n,\min}]) \in \Dcal_n\]
 shows the \pdirectionalitys at $D_{n+1,0}$.
The \pdirectionalitys and the calculation
\[ \phi^{-1}(D_{n+1,0}) = \bigcup_{v \in V_{n+1}} [p_{v,w(v)}]
 \subseteq [p_{n,\max}]\]
 show the \bidirectionalitys at $D_{n+1,0}$.
From the refining sequence
 $\Dcal_0 \prec \Dcal_1 \prec \Dcal_2 \prec \dotsb$,
 we have defined a KR-covering $\covrepa{G}{\fai}$, the rank of which
 is equal to the rank of the Bratteli diagram.
We also note \cref{item:bratteli-to-cov:d}.
Thus, by the argument in \cref{rem:partition-covering},
 we get the next lemma:
\begin{lem}\label{lem:BD-to-KR-cov}
Let $(V,E)$ be an essentially simple ordered Bratteli diagram.
Let $\covrepa{G}{\fai}$ be the \bidirectionals KR-covering defined above.
Then, $G_{\infty}$ is topologically conjugate to $(E_{0,\infty},\phi)$.
Moreover, it follows that the ranks coincide.
\end{lem}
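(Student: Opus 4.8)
The plan is to read off \cref{lem:BD-to-KR-cov} as an instance of the converse half of \cref{rem:partition-covering}, applied to $X = E_{0,\infty}$, the Vershik map $f = \phi$, and the partitions $\Ucal_n = \Dcal_n$ $(n \bni)$. The point is that the KR-covering $\covrepa{G}{\fai}$ built just above the statement is, tautologically, the graph covering that \cref{rem:partition-covering} manufactures out of this data: by construction $V(G_n) = \Dcal_n$, the adjacency is $(u,v) \in E(G_n) \iff \phi(u) \cap v \nekuu$, and the bonding map $\fai_{n+1}$ is the refinement assignment $u \mapsto v$ characterised by $u \subseteq v$. Hence, once the hypotheses of \cref{rem:partition-covering} are verified, its conclusion states exactly that the inverse limit $G_{\infty}$ is topologically conjugate to $(E_{0,\infty},\phi)$, which is the assertion.

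So I would simply confirm the hypotheses, each of which has been prepared. First, $E_{0,\infty}$ is a closed subset of $\prod_{i \bpi} E_i$, hence a compact metrizable zero-dimensional space, and $\phi$ is a homeomorphism, in particular continuous and surjective; moreover $\Dcal_0 = \seb E_{0,\infty} \sen$. Second, each $\Dcal_n$ is a finite partition of $E_{0,\infty}$ into closed and open sets, and the sequence is refining: for $p \in P_{n+1}$ one has $[p] \subseteq [(e_1,\dotsc,e_n)]$, so every non-base atom of $\Dcal_{n+1}$ lies in a single atom of $\Dcal_n$, while \cref{item:bratteli-to-cov:c} of \cref{lem:bratteli-to-covering} (via \cref{lem:first-n-coincide}) gives $D_{n+1,0} \subseteq [p_{n,\min}] \subseteq D_{n,0}$ for the base atom. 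Third, $\bigcup_{n \bni} \Dcal_n$ generates the topology of $E_{0,\infty}$, which is precisely \cref{item:bratteli-to-cov:d}.

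The remaining hypothesis, that for every $U \in \Dcal_{n+1}$ there is a $U' \in \Dcal_n$ with $\phi(U) \subseteq U'$, is exactly the \pdirectionalitys of $\fai_{n+1}$, and it has already been checked in the construction. On a non-base atom $D_{n+1,v,i}$ one invokes \cref{item:bratteli-to-cov:a} of \cref{lem:bratteli-to-covering} (and \cref{item:bratteli-to-cov:b} for the closing atom of each circuit), which sends $\phi(D_{n+1,v,i})$ into a single atom of $\Dcal_{n+1}$ and therefore, by the refinement just noted, into a single atom of $\Dcal_n$; on the base atom one uses $\phi(D_{n+1,0}) \subseteq \phi([p_{n,\min}])$, the latter being a member of $\Dcal_n$. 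With all hypotheses in force, \cref{rem:partition-covering} applies and delivers the conjugacy; since the covering is in fact \bidirectional, \cref{thm:0dim=covering} reconfirms that $G_{\infty}$ is a homeomorphic system, consistently with $\phi$ being a homeomorphism.

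I anticipate no real obstacle, because the substantive combinatorics — that $\phi$ pushes each level-$(n+1)$ atom into a single level-$n$ atom, together with the behaviour at the distinguished minimal and maximal cylinders — was dispatched in \cref{lem:bratteli-to-covering,lem:first-n-coincide} and in the paragraph preceding the statement. The only thing demanding care is the bookkeeping check that the covering produced abstractly by \cref{rem:partition-covering} agrees edge-for-edge and bond-for-bond with the hand-built $\covrepa{G}{\fai}$, so that its inverse limit is literally $G_{\infty}$.
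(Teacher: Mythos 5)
Your proposal is correct and is essentially the paper's own argument: the paper likewise builds the refining clopen partitions $\Dcal_n$, checks the directionality condition and the generation of the topology via \cref{lem:bratteli-to-covering,lem:first-n-coincide}, and then concludes by "the argument in \cref{rem:partition-covering}." The only cosmetic difference is that the paper folds these verifications into the construction preceding the lemma rather than into a displayed proof, so your write-up is a faithful expansion of the same route.
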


\begin{thm}\label{thm:coincidence-of-ranks-essentially-simple-BV-and-KR-cov}
Let $(X,f)$ be a (homeomorphic) essentially minimal zero-dimensional system.
Then, the topological rank by essentially simple Bratteli--Vershik models and that by KR-covering models coincide.
\end{thm}
\begin{proof}
By \cref{lem:KR-covering->Bratteli-diagram}, we get 
\[ \text{topological rank (Bratteli--Vershik model)}
 \le \text{topological rank (KR-covering model)}.\]
The converse is obtained by \cref{lem:BD-to-KR-cov}.
\end{proof}

Owing to \cref{thm:essentially-simple-representation},
 the next proposition is no more than a remark:
\begin{prop}\label{prop:bidirectional-KR-cov-essentially-minimal}
 For a homeomorphic zero-dimensional topological dynamical system $(X,f)$,
 the following are equivalent:
\enumb
\item $(X,f)$ is essentially minimal,
\item $(X,f)$ has a Bratteli--Vershik representation of an essentially simple
 ordered Bratteli diagram, and
\item there exists a \bidirectionals KR-covering $\covrepa{G}{\fai}$ such that
 $G_{\infty}$ is topologically conjugate to $(X,f)$.
\enumn
\end{prop}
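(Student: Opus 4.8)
The plan is to close a short cycle of implications among the three conditions, leaning entirely on results already in hand. The equivalence of (a) and (b) is nothing but \cref{thm:essentially-simple-representation}, so no work is needed there; it remains only to link (b) and (c), and I would do this by invoking the two explicit constructions carried out earlier in this section, each of which was shown to preserve topological conjugacy.

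For the direction (b) $\Rightarrow$ (c), I would start from a Bratteli--Vershik representation $(E_{0,\infty},\phi)$ of $(X,f)$ coming from an essentially simple ordered Bratteli diagram $(V,E)$. After telescoping I may assume $(V,E)$ is in reduced form, which changes neither the Bratteli--Vershik system up to conjugacy nor the essential simplicity, and which is exactly the standing hypothesis under which the construction of this section operates. Then \cref{lem:BD-to-KR-cov} hands me a \bidirectionals KR-covering $\covrepa{G}{\fai}$ whose inverse limit $G_\infty$ is topologically conjugate to $(E_{0,\infty},\phi)$, hence to $(X,f)$; this is precisely assertion (c).

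For the reverse direction (c) $\Rightarrow$ (b), I would begin with a \bidirectionals KR-covering $\covrepa{G}{\fai}$ whose inverse limit $G_\infty$ is topologically conjugate to $(X,f)$. Feeding this covering into \cref{lem:KR-covering->Bratteli-diagram} produces an essentially simple ordered Bratteli diagram whose Bratteli--Vershik system is topologically conjugate to $G_\infty$, and therefore to $(X,f)$; this furnishes the Bratteli--Vershik representation required by (b).

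I do not expect any genuine obstacle, which is why the statement is, as remarked, little more than bookkeeping. The two substantive directions are delivered almost verbatim by \cref{lem:BD-to-KR-cov} and \cref{lem:KR-covering->Bratteli-diagram}, and the only gluing step is the transitivity of topological conjugacy, which lets me compose the conjugacy supplied by each lemma with the conjugacy assumed in the respective hypothesis. The mild care points are merely to record the telescoping-to-reduced-form reduction in the $(b)\Rightarrow(c)$ step and to note that the covering-to-diagram dictionary of this section is what converts the abstract equivalence of \cref{thm:essentially-simple-representation} into the concrete third characterization by \bidirectionals KR-coverings.
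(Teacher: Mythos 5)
Your proof is correct and follows exactly the paper's own route: the paper's proof is a one-line citation of \cref{thm:essentially-simple-representation}, \cref{lem:KR-covering->Bratteli-diagram}, and \cref{lem:BD-to-KR-cov}, which you have simply unpacked into the three implications (a)$\Leftrightarrow$(b), (b)$\Rightarrow$(c), and (c)$\Rightarrow$(b). Your explicit mention of the telescoping-to-reduced-form step is a detail the paper leaves implicit, but it is the right bookkeeping and does not change the argument.
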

\begin{proof}
By \cref{thm:essentially-simple-representation,lem:KR-covering->Bratteli-diagram,lem:BD-to-KR-cov}, the proof is obvious.
\end{proof}
\section{Homeomorphic proximal Cantor systems}
\label{sec:proximal-systems}
We now focus our attention on proximal cases.
\begin{defn}\label{defn:proximal-BV}
We say that an ordered Bratteli diagram is \textit{proximal}
 if there exist vertices $v_{n,1} \in V_n$ $(n \bni)$ and edges
 $e_n \in E_n$ $(n \bpi)$ such that
\itemb
\item  $s(e_{n+1}) = v_{n,1}$ and $r(e_{n+1}) = v_{n+1,1}$ ($n \bni$),
\item the infinite path $p_0 := (e_{1}, e_{2},\dotsc)$ is both 
 maximal and minimal, and
\item $p_0$ is the unique maximal infinite path from $V_0$ and $p_0$ is 
 the unique minimal infinite path from $V_0$.
\itemn
It is evident that an essentially simple ordered Bratteli diagram
 is proximal if and only if $p_{\max} = p_{\min}$.
\end{defn}
\begin{defn}\label{defn:proximal-covering}
We say that a KR-covering $\covrepa{G}{\fai}$ is \textit{proximal} if
 for each $n \bpi$, where $\seb c_{n,t} \mid  1 \le t \le r_n \sen$ 
 is the set of circuits of $G_n$, there exists unique $c_{n,t}$
 such that $p(n,t) = 1$.
\end{defn}
\begin{nota}\label{nota:proximal-KR}
Hereafter, without loss of generality, we assume that
 $p(n,1) = 1$ for each $n \bpi$.
We rewrite $e_n := c_{n,1}$ and $c_{n,t-1} := c_{n,t}$ for $1 < t \le r_n$.
We write $\barr_n := r_n -1$.
Thus, we get $\barr_n+1$ circuits $e_n$ and $c_{n,t}$ ($1 \le t \le \barr_n$).
\end{nota}
We remark that the definition of
 \pdirectionalitys and \bidirectionalitys implies the following:
\begin{rem}\label{rem:proximal-KR}
Let $\covrepa{G}{\fai}$ be a proximal KR-covering.
It is evident that for each $n \bpi$, $\fai_{n+1}(e_{n+1}) = e_n$.
Let $n \bpi$ and $1 \le t \le \barr_{n+1}$.
For each $1 \le t \le \barr_{n+1}$, we have
 \[\fai_{n+1}(c_{n+1,t}) = c_{n,t,1} c_{n,t,2} \dotsb c_{n,t,\barw(n,t)}\]
 for some $\barw(n,t) > 1$
 and $c_{n,t,i} \in \sC_n$ ($1 \le i \le \barw(n,t)$).
Then, $c_{n,t,1} = e_n$.
If the covering is \bidirectional,
 then we also get $c_{n,t,\barw(n,t)} = e_n$.
\end{rem}
We shall show the following:
\begin{thm}\label{thm:proximal}
Let $(X,f)$ be a zero-dimensional homeomorphic topological dynamical system.
Then, the following are equivalent:
\enumb
\item\label{item:thm:proximal:td} $(X,f)$ is proximal,
\item\label{item:thm:proximal:BV}
 $(X,f)$ has a Bratteli--Vershik representation by
 a proximal ordered Bratteli diagram, and
\item\label{item:thm:proximal:KR}
 $(X,f)$ is topologically conjugate to the inverse limit of
 a \bidirectionals proximal KR-covering.
\enumn
\end{thm}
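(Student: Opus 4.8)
The plan is to establish the two equivalences \ref{item:thm:proximal:td}$\Leftrightarrow$\ref{item:thm:proximal:BV} and \ref{item:thm:proximal:BV}$\Leftrightarrow$\ref{item:thm:proximal:KR} separately, in each case feeding the problem into the essentially minimal machinery already set up. A proximal system has, by \cref{thm:proximal-system}, a fixed point that is its unique minimal set, so it is in particular essentially minimal; hence \cref{prop:bidirectional-KR-cov-essentially-minimal} applies and supplies both a Bratteli--Vershik representation by an essentially simple diagram and a \bidirectionals KR-covering. All that remains is to match the proximality of $(X,f)$ with the combinatorial condition ``$p_{\max}=p_{\min}$'' on the diagram (\cref{defn:proximal-BV}) and with the ``unique period-$1$ circuit'' condition on the covering (\cref{defn:proximal-covering}).

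For \ref{item:thm:proximal:td}$\Leftrightarrow$\ref{item:thm:proximal:BV}: assuming \ref{item:thm:proximal:td}, fix an essentially simple representation; the conjugacy carries the unique fixed point of $(X,f)$ to the unique minimal set of $(E_{0,\infty},\phi)$, which is therefore a single fixed point. By \cref{thm:essentially-simple-representation} both $p_{\max}$ and $p_{\min}$ lie in that minimal set, so $p_{\max}=p_{\min}$ and the diagram is proximal. Conversely, if $p_{\max}=p_{\min}=:p_0$, the definition of the Vershik map gives $\phi(p_0)=\phi(p_{\max})=p_{\min}=p_0$, so $\{p_0\}$ is a fixed point, hence a minimal set; essential simplicity forces it to be the unique minimal set, and \cref{thm:proximal-system} then yields proximality. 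This direction is routine once the fixed-point/minimal-set dictionary is in place.

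For \ref{item:thm:proximal:KR}$\Rightarrow$\ref{item:thm:proximal:BV}, I would start from a \bidirectionals proximal KR-covering and build the diagram as in \cref{lem:KR-covering->Bratteli-diagram}, identifying $V_n$ with $\sC(G_n)$. Writing $e_n$ for the unique period-$1$ circuit, \cref{rem:proximal-KR} gives $\fai_{n+1}(e_{n+1})=e_n$ and $c_{n,t,1}=c_{n,t,\barw(n,t)}=e_n$ for every non-loop circuit. Since the edges into a level-$(n+1)$ vertex are numbered by the position $j$ in $\fai_{n+1}(c_{n+1,t})=c_{n,t,1}\cdots c_{n,t,\barw(n,t)}$, both the minimal edge ($j=1$) and the maximal edge ($j=\barw(n,t)$) into every vertex of $V_{n+1}$ have source $e_n$; for the loop vertex $e_{n+1}$ this holds through its single edge. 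An infinite minimal (resp. maximal) path is thereby forced through the loop vertices $e_n$ at every level, so $p_{\min}=p_{\max}$ and the diagram is proximal. For the reverse \ref{item:thm:proximal:BV}$\Rightarrow$\ref{item:thm:proximal:KR}, I would put the proximal diagram in reduced form and run the construction of \cref{lem:BD-to-KR-cov}, with vertex sets $\Dcal_n$. The only candidate period-$1$ circuit is the center loop $(D_{n,0},D_{n,0})$: at any other vertex $D_{n,v,i}$ one has $\phi(D_{n,v,i})=D_{n,v,i+1}$ by \cref{item:bratteli-to-cov:a}, disjoint from $D_{n,v,i}$, so no self-edge occurs. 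Existence of the center loop follows from the fixed point: since the first $n$ edges of $p_0=p_{\max}=p_{\min}$ equal $p_{n,\min}$, we have $p_0\in[p_{n,\min}]\subseteq D_{n,0}$ while $\phi(p_0)=p_0\in D_{n,0}$, whence $\phi(D_{n,0})\cap D_{n,0}\neq\emptyset$. Thus there is a unique period-$1$ circuit at each level and the covering is proximal.

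The step I expect to demand the most care is the \ref{item:thm:proximal:BV}$\Leftrightarrow$\ref{item:thm:proximal:KR} bookkeeping: one must verify that the reduced-form and telescoping normalizations do not disturb proximality, and that the edge numbering in \cref{lem:KR-covering->Bratteli-diagram} genuinely sends the first and last windings $c_{n,t,1},c_{n,t,\barw(n,t)}$ to the minimal and maximal edges. The crucial observation, which keeps the construction from accidentally producing no period-$1$ circuit, is that the fixed point $p_0$ lies in every $D_{n,0}$ and is $\phi$-invariant, pinning down the center loop.
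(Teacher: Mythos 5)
Your proposal is correct, and its treatment of the equivalence between proximality and the condition $p_{\max}=p_{\min}$ (items (a) and (b)) coincides with the paper's: both directions go through \cref{thm:essentially-simple-representation} and \cref{thm:proximal-system} exactly as you describe. Where you genuinely depart from the paper is in how you tie item (c) to the rest. The paper never proves (c)$\Rightarrow$(b) directly: it closes the cycle with a dynamical argument (c)$\Rightarrow$(a) --- the self-loops at the centers yield a fixed point of $G_\infty$, which by \cref{prop:bidirectional-KR-cov-essentially-minimal} is the unique minimal set, so \cref{thm:proximal-system} gives proximality --- and it obtains (b)$\Rightarrow$(c) by first invoking (b)$\Rightarrow$(a) to know the conjugate system $G_\infty$ is proximal, whence each $G_n$ acquires a period-$1$ circuit from the fixed point. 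You instead establish (b)$\Leftrightarrow$(c) purely combinatorially: your (c)$\Rightarrow$(b) leg, tracing the minimal and maximal edges of the diagram built in \cref{lem:KR-covering->Bratteli-diagram} through the loop circuits $e_n$ (via \cref{rem:proximal-KR}) to force $p_{\min}=p_{\max}$, has no counterpart in the paper; and your (b)$\Rightarrow$(c) leg replaces the appeal to proximality of the conjugate system by the observation that the $\phi$-fixed path $p_0$ lies in every $D_{n,0}$, which pins the center loop, while \cref{lem:bratteli-to-covering}\cref{item:bratteli-to-cov:a} excludes self-loops elsewhere. The paper's route is shorter given that \cref{prop:bidirectional-KR-cov-essentially-minimal} and Akin--Kolyada are already in hand; yours makes the dictionary between proximal diagrams and proximal coverings explicit at the diagram level and, as a bonus, spells out two points the paper leaves implicit --- the uniqueness of the period-$1$ circuit in each $G_n$ and the fact that the first/last windings $c_{n,t,1},c_{n,t,\barw(n,t)}$ really are the sources of the minimal/maximal edges under the numbering of \cref{lem:KR-covering->Bratteli-diagram}.
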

\begin{proof}
\textbf{\cref{item:thm:proximal:td} implies \cref{item:thm:proximal:BV}:}
Let $(X,f)$ be a zero-dimensional homeomorphic proximal system.
Because this is essentially minimal,
 by \cref{thm:essentially-simple-representation},
 $(X,f)$ has a Bratteli--Vershik representation by
 an essentially simple ordered Bratteli diagram.
Furthermore,
 by the last statement of \cref{thm:essentially-simple-representation},
 both $p_{\max}$ and $p_{\min}$ are points in the unique minimal set
 that is a fixed point.
Thus, $p_{\max} = p_{\min}$.
This concludes the proof.

\textbf{\cref{item:thm:proximal:BV} implies \cref{item:thm:proximal:td}:}
Let $(V,E)$ be a proximal ordered Bratteli diagram.
Then, it follows that $p_{\max} = p_{\min}$.
Because $(V,E)$ is essentially simple by definition,
 it follows that this fixed point is the unique minimal set.
From \cref{thm:proximal-system}, we get that the Bratteli--Vershik system
 is proximal.

\textbf{\cref{item:thm:proximal:BV} implies \cref{item:thm:proximal:KR}:}
Let $(V,E)$ be a proximal ordered Bratteli diagram.
Because we have already shown that \cref{item:thm:proximal:BV}
 implies \cref{item:thm:proximal:td},
 the Bratteli--Vershik system $(E_{0,\infty},\phi)$ is proximal.
On the other hand, because $(V,E)$
 is an essentially simple ordered Bratteli diagram,
 the argument of \cref{lem:BD-to-KR-cov} can be applied, and we get
 a KR-covering $\covrepa{G}{\fai}$ such that
 $G_{\infty}$ and $(E_{0,\infty},\phi)$ are topologically conjugate.
Thus, $G_{\infty}$ is proximal, i.e., it has a fixed point that is the
 unique minimal set.
It follows that each $G_n$ has a circuit of period 1.
This concludes the proof.

\textbf{\cref{item:thm:proximal:KR} implies \cref{item:thm:proximal:td}:}
Let $\covrepa{G}{\fai}$ be a \bidirectionals proximal KR-covering.
Let $v_{n,0}$ be the center of $G_n$ for all $n \bpi$.
Then, by the definition of proximality for KR-covering,
 $(v_{n,0},v_{n,0}) \in E(G_n)$ for all $n \bpi$.
This implies the existence of a fixed point.
By \cref{prop:bidirectional-KR-cov-essentially-minimal},
 this is the unique minimal set.
This completes the proof.
\end{proof}
\section{Array Systems}\label{sec:array-systems}
\label{sec:array-systems}
In this section,
 following \cite{DOWNAROWICZ_2008FiniteRankBratteliVershikDiagAreExpansive},
 we introduce the \textit{array system}.
Let $\covrepa{G}{\fai}$ be a \bidirectionals KR-covering.
For each $n \bni$, we have expressed
 the center of $G_n$ as $v_{n,0} \in V(G_n)$.
Further, we have denoted $G_{\infty} = (X,f)$.
For each $x \in X$, $n \ge 0$, and $i \bi$,
 there exists a unique $v \in V(G_n)$ such that $f^i(x) \in U(v)$.
We express this $v$ as $u_{n,i}$.
We define a sequence $\ddx := (u_{n,i})_{n \bni, i \bi}$.
We define $\ddX := \seb \ddx \mid x \in X\sen$
 and $\ddx(n,i) := u_{n,i}$ for $n \bni$ and $i \bi$.
Further, we denote $\ddx[n] := (u_{n,i})_{i \bi}$.
Because every $\ddx[n]$ is an infinite path of $G_n$,
 every segment $(\ddx[n])[a,b] = (u_{n,a},u_{n,a+1},\dotsc,u_{n,b})$
 is a finite path of $G_n$.
We define a shift map $\sigma : \ddX \to \ddX$ that shifts left.
Let $x \in X$, $n \ge 0$ and $i \bi$.
Then, there exists a unique circuit $c \in \sC_n = \sC(G_n)$ such that
 $(\ddx(n,i),\ddx(n,i+1)) \in E(c)$.
We denote $\barx(n,i) = c$.
Thus, once a $c \in \sC_n$ appears, then the $c$ continues at least $l(c)$ times.
If it is necessary to distinguish the beginning of the circuits,
 then it can be done by changing
 $\barx(n,i) = c$ into $\barx(n,i) = \check{c}$
 for all the $i$'s with $\ddx(n,i) = v_{n,0}$.
\begin{rem}\label{rem:proximal-cond-array}
In the case of proximal systems,
 we shall show, in \cref{rem:continuation-upper-bound}, that
 for each $n \bpi$,
 the number of continuations of the same $c \in \sC_n \setminus \seb e_n \sen$
 has an upper bound.
Therefore, in the case of proximal systems, we do not need this change.
\end{rem}
Following \cite{DOWNAROWICZ_2008FiniteRankBratteliVershikDiagAreExpansive},
 we make an $n$-\textit{cut} in each $\barx[n]$
 just before each $i$ with
 $\ddx(n,i) = v_{n,0}$ (see \cref{fig:array-system}).
The pair $(\barX,\sigma)$ of a set $\barX := \seb \barx \mid x \in X \sen$
 and the shift map $\sigma : \barX \to \barX$ that shifts left
 is called an \textit{array system}.
To abbreviate the notation, we write $e_0 := (v_{0,0},v_{0,0}) \in E(G_0)$.
Therefore, for each $x \in X$ and $n \bni$,
 there exists a unique sequence
 $\barx[n]  := (\dotsc,\barx(n,-2),\barx(n,-1),\barx(n,0),\barx(n,1),\dotsc)$
 of circuits of $G_n$ that is separated by the cuts.
For integers $s < t$,
 we denote $(\barx[n])[s,t] :=
 (\barx(n,s),\barx(n,s+1),\barx(n,s+2),\dotsc, \barx(n,t))$.
We have abbreviated $\barx[0] = (\dotsc,e_0,e_0,e_0,\dotsc)$
 that is cut everywhere.
%
%
\begin{figure}
\begin{center}\leavevmode
\xy
(-5,18)*{}; (105,18)*{} **@{-},
 (49,15)*{c_{n,1}
 \hspace{9mm} c_{n,3} \hspace{4mm}
 \hspace{12mm} c_{n,1} \hspace{6mm}
 \hspace{8mm} c_{n,3} \hspace{3mm}
 \hspace{10mm} c_{n,2} \hspace{4mm}
 \hspace{8mm} c_{n,1}
 \hspace{1mm} },
(7,18)*{}; (7,12)*{} **@{-},
(28,18)*{}; (28,12)*{} **@{-},
(49,18)*{}; (49,12)*{} **@{-},
(70,18)*{}; (70,12)*{} **@{-},
(84,18)*{}; (84,12)*{} **@{-},
(-5,12)*{}; (105,12)*{} **@{-},
 (54,9)*{c_{n+1,5}
 \hspace{35mm} c_{n+1,1} \hspace{3mm}
 \hspace{28mm} c_{n+1,3} \hspace{2mm} },
(28,12)*{}; (28,6)*{} **@{-},
(84,12)*{}; (84,6)*{} **@{-},
(-5,6)*{}; (105,6)*{} **@{-},
\endxy
\end{center}
\caption{$n$th and $(n+1)$th rows of an array system with cuts.}
\label{fig:array-system}
\end{figure}
%
%
%
%
%
For an interval $[n,m]$ with $m > n$, the combination of rows $\barx[n']$
 with $n \le n' \le m$ is denoted as $\barx[n,m]$.
The \textit{array system of}
 $x$ is the infinite combination $\barx[0,\infty)$
 of rows $\barx[n]$ for all $0 \le n < \infty$ (see \cref{fig:array-system-2}).
%
%
Note that for $m > n$, if there exists an $m$-cut at position $i$
 (just before position $i$), then
 there exists an $n$-cut at position $i$ (just before position $i$).
%
%
%
%
\begin{figure}
\begin{center}\leavevmode
\xy
(-5,30)*{}; (105,30)*{} **@{-},
 (49,27)*{e_0\phantom{{}_{{},f}}\ e_0\phantom{{}_{{},f}}
 \ e_0\phantom{{}_{{},l}}
 \ e_0\phantom{{}_{{},l}}\ e_0\phantom{{}_{{},l}}
 \ e_0\phantom{{}_{{},f}}\ e_0\phantom{{}_{{},l}}
 \ e_0\phantom{{}_{{},l}}\ e_0\phantom{{}_{{},l}}
 \ e_0\phantom{{}_{{},f}}\ e_0\phantom{{}_{{},l}}
 \ e_0\phantom{{}_{{},l}}\ e_0\phantom{{}_{{},l}}
 \ e_0\phantom{{}_{{},f}}\ e_0\phantom{{}_{{},l}}\ e_0},
(0,30)*{}; (0,24)*{} **@{-},
(7,30)*{}; (7,24)*{} **@{-},
(14,30)*{}; (14,24)*{} **@{-},
(21,30)*{}; (21,24)*{} **@{-},
(28,30)*{}; (28,24)*{} **@{-},
(35,30)*{}; (35,24)*{} **@{-},
(42,30)*{}; (42,24)*{} **@{-},
(49,30)*{}; (49,24)*{} **@{-},
(56,30)*{}; (56,24)*{} **@{-},
(63,30)*{}; (63,24)*{} **@{-},
(70,30)*{}; (70,24)*{} **@{-},
(77,30)*{}; (77,24)*{} **@{-},
(84,30)*{}; (84,24)*{} **@{-},
(91,30)*{}; (91,24)*{} **@{-},
(98,30)*{}; (98,24)*{} **@{-},
(105,30)*{}; (105,24)*{} **@{-},%
(-5,24)*{}; (105,24)*{} **@{-},
 (48,21)*{c_{1,3}
 \hspace{11mm} c_{1,3} \hspace{8mm}
 \hspace{8mm} c_{1,1} \hspace{10mm}
 \hspace{5mm} c_{1,3} \hspace{8mm}
 \hspace{4mm} c_{1,2} \hspace{8mm}
 \hspace{3mm} c_{1,1}
 \hspace{0mm} },
(7,24)*{}; (7,18)*{} **@{-},
(28,24)*{}; (28,18)*{} **@{-},
(49,24)*{}; (49,18)*{} **@{-},
(70,24)*{}; (70,18)*{} **@{-},
(84,24)*{}; (84,18)*{} **@{-},
(-5,18)*{}; (105,18)*{} **@{-},
(56,15)*{c_{2,3} \hspace{0.7mm}\
 \hspace{35mm} c_{2,1} \hspace{27mm}
 \hspace{11mm} c_{2,3}
 \hspace{2mm} },
(28,18)*{}; (28,12)*{} **@{-},
(84,18)*{}; (84,12)*{} **@{-},
(-5,12)*{}; (105,12)*{} **@{-},
(36,9)*{c_{3,3} \hspace{26mm}
 \hspace{15mm} c_{3,1}},
(28,12)*{}; (28,6)*{} **@{-},
(-5,6)*{}; (105,6)*{} **@{-},
(50,4)*{\vdots},
\endxy
\end{center}
\caption{First 4 rows of an array system.}\label{fig:array-system-2}
\end{figure}
%
%
%
%
%
\begin{nota}\label{nota:n-symbol}
For each circuit $c = c_{n,i} \in \sC_n$, if we write
$\fai_n(c) = c_{n-1,i,1} c_{n-1,i,2} \dotsb c_{n-1,i,w(n,i)}$ as 
 a series of circuits,
 then each $c_{n-1,i,j}$ determines a series of circuits of $G_{n-2}$
 similarly.
Thus, we can determine a set of circuits arranged in a square form as in
 \cref{fig:2-symbol}.
This form is said to be the \textit{$n$-symbol} and denoted by $c$.
For $m < n$,
 the projection $c[m]$ that is a finite sequence of circuits of $G_m$
 is also defined.
\end{nota}
\begin{figure}
\begin{center}\leavevmode
\xy
(28,38)*{}; (84,38)*{} **@{-},
(57,35)*{e_0\phantom{{}_{{},l}}\ e_0\phantom{{}_{{},l}}
 \ e_0\phantom{{}_{{},l}}\ e_0\phantom{{}_{{},l}}
 \ e_0\phantom{{}_{{},f}}\ e_0\phantom{{}_{{},l}}
 \ e_0\phantom{{}_{{},l}}\ e_0\phantom{{}_{{},l}}},
(28,38)*{}; (28,32)*{} **@{-},
(35,38)*{}; (35,32)*{} **@{-},
(42,38)*{}; (42,32)*{} **@{-},
(49,38)*{}; (49,32)*{} **@{-},
(56,38)*{}; (56,32)*{} **@{-},
(63,38)*{}; (63,32)*{} **@{-},
(70,38)*{}; (70,32)*{} **@{-},
(77,38)*{}; (77,32)*{} **@{-},
(84,38)*{}; (84,32)*{} **@{-},
(28,32)*{}; (84,32)*{} **@{-},
 (60,29)*{
 \hspace{7mm} c_{1,1} \hspace{9mm}
 \hspace{5mm} c_{1,3} \hspace{7mm}
 \hspace{3mm} c_{1,2} \hspace{8mm}
 \hspace{2mm} },
(28,32)*{}; (28,26)*{} **@{-},
(49,32)*{}; (49,26)*{} **@{-},
(70,32)*{}; (70,26)*{} **@{-},
(84,32)*{}; (84,26)*{} **@{-},
(28,26)*{}; (84,26)*{} **@{-},
(56,23)*{
 \hspace{23mm} c_{2,1} \hspace{23mm}},
(28,26)*{}; (28,20)*{} **@{-},
(84,26)*{}; (84,20)*{} **@{-},
(28,20)*{}; (84,20)*{} **@{-},
\endxy
\end{center}
\caption{$2$-symbol corresponding to the circuit $c_{2,1}$
 of \cref{fig:array-system-2}.}\label{fig:2-symbol}
\end{figure}
%
%
%
It is clear that $\barx[n] = \barx'[n]$
 implies that $\barx[0,n] = \barx'[0,n]$.
If $x \ne x'$ $(x, x' \in X)$,
 then there exists an $n > 0$ with $x[n] \ne x'[n]$.
For $x, x' \in X$,
 we say that the pair $(x,x')$ is \textit{$n$-compatible}
 if $x[n] = x'[n]$.
If $x[n] \ne x'[n]$,
 then we say that $x$ and $x'$ are \textit{$n$-separated}.
We recall that
 if there exists an $n$-cut at position $k$,
 then there exists an $m$-cut at position
 $k$ for all $0 \le m \le n$.
Let $x \ne x'$.
%
%
\if0
If a pair $(x,x')$ is $n$-compatible and $(n+1)$-separated,
 then we say that the \textit{depth of compatibility} of $x$ and $x'$ is $n$,
 or the pair $(x,x')$ has \textit{depth} $n$.
If $(x,x')$ is a pair of depth $n$
 and $(x,x'')$ is a pair of depth $m > n$,
 then the pair $(x',x'')$ has depth $n$ (hence, never equal).
%
%
An $n$-separated pair $(x,x')$ is said to \textit{have a common $n$-cut} if
 both $x$ and $x'$ have an $n$-cut at the same position.
If a pair has a common $n$-cut, then it also has a common $m$-cut
 for all $m$ ($0 \le m \le n$).
\fi
The set $\barX_n := \seb \barx[n] \mid x \in X \sen$ is a two-sided subshift
 of a finite set
 $\sC_n$.
The factoring map is denoted by $\pi_n : \barX \to \barX_n$,
 and the shift map is denoted by $\sigma_n : \barX_n \to \barX_n$.
We simply write $\sigma = \sigma_n$ for all $n$
 if there is no confusion.
For $m > n \ge 0$, the factoring map $\pi_{m,n} : \barX_m \to \barX_n$
 is defined by $\pi_{m,n}(\barx[m]) = \barx[n]$ for all $x \in X$.
\section{Proximal systems of rank 2.}\label{sec:prox-rank-2}
\label{sec:proximal-systems-of-rank-2}
\subsection{General finite-rank proximal systems}
First, we recall general notations for homeomorphic
 proximal zero-dimensional systems.
Let $\covrepa{G}{\fai}$ be a \bidirectionals proximal KR-covering and
 let $(X,f)$ be the inverse limit.
We exclude the trivial case in which $X$ consists of a single point.

In \cref{nota:proximal-KR},
 we have stated that for all $n \bpi$,
 $\sC_n = \seb e_n, c_{n,1}, \dotsc, c_{n,\barr_n} \sen$.
In \cref{rem:proximal-KR}, for all $n \bpi$ and $1 \le t \le \barr_{n+1}$,
 we have written that 
 \[\fai_{n+1}(c_{n+1,t}) = c_{n,t,1} c_{n,t,2} \dotsb c_{n,t,\barw(n,t)},\]
 for some $\barw(n,t) > 1$
 and $c_{n,t,i} \in \sC_n$ ($1 \le i \le \barw(n,t)$).
Further, by the \bidirectionalitys condition,
 it follows that $c_{n,t,1} = c_{n,t,\barw(n,t)} = e_n$.
In this section,
 each circuit $c_{n,i}$ $(1 \le i \le \barr_n)$ is represented as 
 $c_{n,i} =
 (u_{n,i,0} = v_{n,0},u_{n,i,1},u_{n,i,2},\dotsc,u_{n,i,\barp(n,i)} = v_{n,0})$
 $(i = 1,2,\dotsc,\barr_n)$.
Evidently,
 the vertices $u_{n,i,j}$ $(1 \le i \le \barr_n, 1 \le j \le \barp(n,i))$
 are mutually distinct,
 except $u_{n,i,0} = u_{n,i,\barp(n,i)} = v_{n,0}$,
 for all $i$ $(1 \le i \le \barr_n)$.
The unique fixed point is denoted as
 $p := (v_{0,0},v_{1,0},v_{2,0},\dotsc) \in X$.
\if0
\begin{figure}
\centering
\begin{tikzpicture}
\GraphInit[vstyle=Classic]
\tikzset{VertexStyle/.append style={minimum size=3pt}}
\Vertex[x=0, y=0,Lpos=90,L=$v_{n,0}$]{Origin};
\draw[rotate=40] (Origin.west)
 .. controls (-0.5,0) and (-1.1,0.4) .. (-1.29,0.6)
 .. controls (-2,1) and (-3,4) .. node[pos=0.5,left]{$c_{n,1}$} (-1.2, 5)
 .. controls (-0.7,5.3) and (-0.3,5.3) .. (0, 5.3)
 .. controls (0.3,5.3) and (0.7,5.3)  .. (1.2, 5)
 .. controls (3,4) and (2,1) .. (1.32,0.5);
\draw [-{Stealth[scale=2]},rotate=40]
      (1.32, 0.5) .. controls (1.0,0.2) and (0.3,-0.05) .. (Origin.north east);
\draw[rotate=15] (Origin.west)
 .. controls (-0.5,0) and (-1.1,0.4) .. (-1.29,0.6)
 .. controls (-2,1) and (-3,4) .. node[pos=0.5,left]{$c_{n,2}$} (-1.2, 5)
 .. controls (-0.7,5.3) and (-0.3,5.3) .. (0, 5.3)
 .. controls (0.3,5.3) and (0.7,5.3)  .. (1.2, 5)
 .. controls (3,4) and (2,1) .. (1.32,0.5);
\draw [-{Stealth[scale=2]},rotate=15]
      (1.32, 0.5) .. controls (1.0,0.2) and (0.3,-0.05) .. (Origin.east);
%
\draw[dotted,rotate=0] (Origin.west)
 .. controls (-0.5,0) and (-1.1,0.4) .. (-1.29,0.6)
 .. controls (-2,1) and (-3,4) ..  (-1.2, 5)
 .. controls (-0.7,5.3) and (-0.3,5.3) .. (0, 5.3)
 .. controls (0.3,5.3) and (0.7,5.3)  .. (1.2, 5)
 .. controls (3,4) and (2,1) .. (1.29,0.6);
\draw [dotted,rotate=0]
      (1.29, 0.6) .. controls (1.1,0.4) and (0.5,0) .. (Origin.east);
\draw[dotted,rotate=-10] (Origin.west)
 .. controls (-0.5,0) and (-1.1,0.4) .. (-1.29,0.6)
 .. controls (-2,1) and (-3,4) ..  (-1.2, 5)
 .. controls (-0.7,5.3) and (-0.3,5.3) .. (0, 5.3)
 .. controls (0.3,5.3) and (0.7,5.3)  .. (1.2, 5)
 .. controls (3,4) and (2,1) .. (1.29,0.6);
\draw [dotted,rotate=-10]
      (1.29, 0.6) .. controls (1.1,0.4) and (0.5,0) .. (Origin.east);
\draw[dotted,rotate=-15] (Origin.west)
 .. controls (-0.5,0) and (-1.1,0.4) .. (-1.29,0.6)
 .. controls (-2,1) and (-3,4) ..  (-1.2, 5)
 .. controls (-0.7,5.3) and (-0.3,5.3) .. (0, 5.3)
 .. controls (0.3,5.3) and (0.7,5.3)  .. (1.2, 5)
 .. controls (3,4) and (2,1) .. (1.29,0.6);
\draw [dotted,rotate=-15]
      (1.29, 0.6) .. controls (1.1,0.4) and (0.5,0) .. (Origin.east);
\draw[dotted,rotate=-33] (Origin.west)
 .. controls (-0.5,0) and (-1.1,0.4) .. (-1.29,0.6)
 .. controls (-2,1) and (-3,4) ..  (-1.2, 5)
 .. controls (-0.7,5.3) and (-0.3,5.3) .. (0, 5.3)
 .. controls (0.3,5.3) and (0.7,5.3)  .. (1.2, 5)
 .. controls (3,4) and (2,1) .. (1.29,0.6);
\draw [dotted,rotate=-33]
      (1.29, 0.6) .. controls (1.1,0.4) and (0.5,0) .. (Origin.east);
\draw[rotate=-40] (Origin.west)
 .. controls (-0.5,0) and (-1.1,0.4) .. (-1.29,0.6)
 .. controls (-2,1) and (-3,4) .. (-1.2, 5)
 .. controls (-0.7,5.3) and (-0.3,5.3) .. (0, 5.3)
 .. controls (0.3,5.3) and (0.7,5.3)  .. (1.2, 5)
 .. controls (3,4) and (2,1) .. node[pos=0.5,right]{$c_{n,\barr_n}$} (1.29,0.6);
\draw [-{Stealth[scale=2]},rotate=-40]
      (1.29, 0.6) .. controls (1.1,0.4) and (0.5,0) .. (Origin.south east);
\draw [arrows = {-Stealth[scale=2]}] (Origin.south east)
 to [loop below,out = -70,in = -110, looseness=26,min distance=16mm] node[pos=0.5,below] {$e_n$}(Origin.south west);
\end{tikzpicture}
\vspace{0mm}
\normalsize
\caption{Structure of $G_n$. \label{fig:structure1}}
\end{figure}
\fi
%
We also write $c_{n,i}(j) = u_{n,i,j}$ for all $0 \le j \le \barp(n,i)$.
Thus, we get the following notation:
\begin{nota}
Let $c \in \sC_n$.
Then, for each $0 \le j \le l(c)$, we get a vertex $c(j) \in V(c)$.
\end{nota}
By telescoping, for $n = 1$, we can assume that $\barr_1 \ge 1$
 and $\barp(1,i) \ge 2$ $(1 \le i \le \barr_1)$.
Suppose that $G_n$ is defined.
Then, $\fai_{n+1}$ is defined
 in the following manner:
\itemb
\item $\fai_{n+1}(v_{n+1,0}) = v_{n,0}$ and
\item for each $1 \le k \le \barr_{n+1}$, there exists a positive integer
 $h(n,k)$ such that 
 $\fai_{n+1}(c_{n+1,k}) = c_1\ c_2\ \dotsb\ c_{h(n,k)}$
 for all $k\ (1 \le k \le r_{n+1})$,
 where $c_j \in \sC_n$ for all $1 \le j \le h(n,k)$ and
 $c_1 = c_{h(n,k)} = e_n$.
\itemn
\begin{rem}\label{rem:continuation-upper-bound}
Note that by the above argument, for each $n \bpi$, the number of
 continuations
 of the same $c \in \sC_n \setminus \seb e_n \sen$ such as $c c c \dotsb c$
has an upper bound.
\end{rem}
It follows that $h(n,k) \ge 2$ ($1 \le k \le \barr_{n+1}$).
Because the center is mapped to the center,
 it follows that $\fai_{n+1}(e_{n+1}) = e_n$.
For all $1 \le k \le \barr_{n+1}$,
 the period $\barp(n+1,k)$ $(1 \le k \le \barr_{n+1})$
 satisfies $\barp(n+1,k) = \sum_{1 \le j \le h(n,k)}l(c_j)$.
Let $N > 0$ be a positive integer.
For cycles $w_k$ $(1 \le k \le N)$ of $G_n$
 that start from and end at the central vertex $v_{n,0}$,
 we abbreviate $\prod_{k = 1}^N w_k := w_1 w_2 \dotsb w_N$.
The main examples we present in this paper are proximal systems of rank 2.
Nevertheless, in this subsection, we confirm some general properties
 of finite-rank proximal systems.
Let $\Gcal : \covrepa{G}{\fai}$ be a \bidirectionals KR-covering
  such that
 $(X,f)$ is (topologically conjugate to) the inverse limit.
Let $r > 1$ be the topological rank of $(X,f)$.
Then, we can take $\Gcal$ such that the rank of $\Gcal$ is $r$.
By telescoping, we can assume that $\num{\sC_n} = r$ for all $n \bpi$.
By \cref{thm:finite-ergodicity},
 there exist at most $r$ ergodic measures in $(X,f)$.
For finite-rank Bratteli--Vershik systems,
 in \cite{Shimomura_2017FiniteRankBVwithPeriodicityExpansive},
 we have shown the following:
\begin{thm}[\cite{Shimomura_2017FiniteRankBVwithPeriodicityExpansive}]
\label{thm:finite-expansive}
Let $(X,f)$ be a finite-rank homeomorphic zero-dimensional system
 such that no minimal set is an infinite odometer.
Then, $(X,f)$ is expansive.
\end{thm}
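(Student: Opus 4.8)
The plan is to argue the contrapositive inside the array system of \cref{sec:array-systems}, in the spirit of Downarowicz and Maass. Since the partitions $\Ucal(G_n)$ $(n\bni)$ generate the topology of $X$, the coding $x \mapsto \barx$ is a topological conjugacy of $(X,f)$ onto $(\barX,\sigma)$, and $\barX = \varprojlim_n \barX_n$ with factor maps $\pi_n : \barX \to \barX_n$. First I would record the elementary equivalence: $(X,f)$ is expansive if and only if $\pi_n$ is injective for some $n$. Indeed, if $\pi_n$ is injective then $(X,f) \cong (\barX_n,\sigma)$ is a two-sided subshift, hence expansive; conversely, if $(X,f)$ is expansive with constant $\ep$, I pick $n$ with $\mesh(\Ucal(G_n)) < \ep$, so that any two distinct points are driven by some $f^k$ into different cells of $\Ucal(G_n)$, forcing $\ddx[n] \ne \ddy[n]$, equivalently $\barx[n] \ne \bary[n]$ (the $n$-cuts and circuit labels of a row are recoverable from, and recover, the vertex sequence $\ddx[n]$). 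Thus it suffices to show that if no $\pi_n$ is injective, then some minimal set of $(X,f)$ is an infinite odometer.

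So suppose every $\pi_n$ fails to be injective. After telescoping so that $\hash \sC_n = r$ for all $n \bpi$, I would for each $n$ choose distinct points agreeing on all rows through level $n$ but differing at level $n+1$; write this as $x_n \ne y_n$ with $\barx_n[0,n] = \bary_n[0,n]$ and $\barx_n[n+1] \ne \bary_n[n+1]$. Agreement through level $n$ says that $x_n$ and $y_n$ share the same level-$n$ circuit sequence and the same $n$-cuts, whereas the level-$(n+1)$ discrepancy says that this common level-$n$ word is \emph{parsed} into the level-$(n+1)$ blocks $\fai_{n+1}(c_{n+1,t}) = c_{n,t,1}c_{n,t,2}\dotsb c_{n,t,w(n,t)}$ in two genuinely different ways. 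The finite-rank hypothesis enters here: at each level there are only $r$ admissible blocks, so a discrepancy is a \emph{parsing ambiguity of bounded complexity} localized near the differing $(n+1)$-cuts. The core step is to convert the existence of such discrepancies into recurrence: applying shifts $\sigma^{k_n}$ to center a discrepancy at the origin and passing to a subsequential limit in $X \times X$, the rank bound on the number of possible local defects lets me extract a single pair of distinct points $x_\infty \ne y_\infty$ whose defect reappears syndetically in both time directions, rather than escaping to infinity.

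Finally I would pass to a minimal set $M$ inside the orbit closure of a point carrying this recurrent defect. Recurrence of a single bounded parsing ambiguity together with finite rank forces the level-$n$ cut structure over $M$ to be asymptotically rigid: points of $M$ that are close share all their $n$-cuts for $n$ up to a level growing with the proximity, which is precisely equicontinuity of $(M,f|_M)$. By the standard characterization of equicontinuous Cantor minimal systems, $(M,f|_M)$ is then topologically conjugate to an infinite odometer, contradicting the hypothesis and finishing the contrapositive. The main obstacle is the extraction in the second paragraph: a naive limit of the witnesses $x_n,y_n$ only returns a diagonal point $x_\infty = y_\infty$, because the separation scale diverges, so the argument must retain quantitative control of the defects — bounding their type by the rank $r$ and proving their recurrence — rather than merely their existence. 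This quantitative bookkeeping is the technical heart, carried out in \cite{Shimomura_2016FiniteRankBVwithPeriodicityExpansive} following \cite{DOWNAROWICZ_2008FiniteRankBratteliVershikDiagAreExpansive}.
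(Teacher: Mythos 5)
Your proposal cannot be matched against an argument in this paper, because the paper contains none: \cref{thm:finite-expansive} is imported verbatim from \cite{Shimomura_2016FiniteRankBVwithPeriodicityExpansive}, stated without proof, and then used (via \cref{thm:finite-expansive-proximal}). So your sketch has to stand on its own, and it does not. The first gap is the extraction step, which you yourself flag: the naive limit of the level-$n$ witnesses $(x_n,y_n)$ collapses to the diagonal, and you say the remedy is "quantitative bookkeeping" of the defects --- but you then delegate exactly that bookkeeping to \cite{Shimomura_2016FiniteRankBVwithPeriodicityExpansive} and \cite{DOWNAROWICZ_2008FiniteRankBratteliVershikDiagAreExpansive}. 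In Downarowicz--Maass this bookkeeping (classification of compatible pairs by depth, analysis of common cuts, pigeonhole/stabilization arguments over the $r$ symbols per level, repeated telescoping) \emph{is} the proof; citing the very result to be proved for this step makes the proposal circular rather than incomplete-but-repairable.

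The second gap is independent and would remain even if the extraction were granted: your final paragraph does not close the contrapositive. You pass to a minimal set $M$ in the orbit closure of the recurrent-defect pair and argue that $(M,f|_M)$ is equicontinuous, "hence an infinite odometer." But the hypothesis of \cref{thm:finite-expansive} excludes only \emph{infinite} odometers; finite minimal sets are allowed, and they are precisely the point of extending Downarowicz--Maass beyond minimal systems --- e.g.\ the proximal systems of this paper, whose unique minimal set is a fixed point. An equicontinuous minimal set can be a single periodic orbit, in which case your conclusion contradicts nothing. You would additionally have to show that the recurrent parsing ambiguity forces $M$ to be infinite, i.e.\ rule out that non-expansiveness coexists with only periodic (or otherwise non-odometer) minimal sets; that is the "periodicity" difficulty in the title of \cite{Shimomura_2016FiniteRankBVwithPeriodicityExpansive}, and it is absent from the sketch. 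There is also a subsidiary unjustified step here: in the non-minimal setting it is not clear that the minimal set you select inherits the defect at all, since the defect pair may merely be proximal to $M$ without constraining the cut structure \emph{on} $M$.
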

As a corollary, we have obtained the following:
\begin{thm}[\cite{Shimomura_2017FiniteRankBVwithPeriodicityExpansive}]
\label{thm:finite-expansive-proximal}
A finite-rank homeomorphic zero-dimensional proximal system is expansive.
\end{thm}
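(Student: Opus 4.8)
The plan is to obtain this as an immediate consequence of \cref{thm:finite-expansive}, whose sole hypothesis beyond finite rank is that no minimal set of $(X,f)$ is an infinite odometer. Hence the entire task reduces to identifying the minimal subsets of a proximal system and checking that none of them is an infinite odometer.

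First I would invoke the Akin--Kolyada characterization recorded in \cref{thm:proximal-system}: since $(X,f)$ is proximal, it has a fixed point $p \in X$ that is the unique minimal subset of $X$. Thus the family of minimal subsets of $(X,f)$ consists of the single one-point set $\seb p \sen$. Because an infinite odometer is by definition an infinite system, the one-point minimal set $\seb p \sen$ cannot be an infinite odometer, so the hypothesis of \cref{thm:finite-expansive} is satisfied.

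Applying \cref{thm:finite-expansive} to the finite-rank homeomorphic zero-dimensional system $(X,f)$ then yields that $(X,f)$ is expansive, which completes the proof. There is essentially no obstacle in this argument; the only point worth remarking is the trivial observation that an infinite odometer, being infinite, cannot coincide with a single fixed point, so the exclusion clause in \cref{thm:finite-expansive} is automatically fulfilled in the proximal setting.
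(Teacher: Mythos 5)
Your proposal is correct and matches the paper's own derivation: the paper states \cref{thm:finite-expansive-proximal} precisely as a corollary of \cref{thm:finite-expansive}, and the implicit argument is exactly yours, namely that by \cref{thm:proximal-system} the unique minimal set of a proximal system is a fixed point, which cannot be an infinite odometer. You have simply made explicit the one-line verification the paper leaves to the reader.
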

We show that finite-rank homeomorphic proximal systems have topological entropy
 $0$.
\begin{prop}\label{prop:topological-entropy-0}
Let $(X,f)$ be a finite-rank homeomorphic proximal system.
Then, it follows that the topological entropy $h(f) = 0$.
\end{prop}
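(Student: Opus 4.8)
The plan is to convert the statement into a bound on word-complexity via expansiveness, and then to bound that complexity by a polynomial using the hierarchical block structure of the KR-covering. By \cref{thm:finite-expansive-proximal}, $(X,f)$ is expansive, hence topologically conjugate to a two-sided subshift; concretely, if I fix $n_0$ with $\mesh\,\Ucal(G_{n_0})$ below an expansive constant $\ep$, then $\Ucal(G_{n_0})$ is a generator and the entropy equals the exponential growth rate $h(f)=\lim_{L\to\infty}(1/L)\log P(L)$, where $P(L)$ is the number of distinct length-$L$ words read off orbits through $\Ucal(G_{n_0})$ (equivalently, the number of nonempty atoms of $\bigvee_{i=0}^{L-1}f^{-i}\Ucal(G_{n_0})$). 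So it suffices to show $P(L)$ grows subexponentially; I will in fact bound it polynomially. (A variational-principle route — finitely many ergodic measures, each of zero entropy — is also available, but the complexity estimate stays inside the tools of this paper.)

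For each level $m>n_0$, the level-$n_0$ reading of every point is the concatenation, dictated by the level-$m$ row of the array system of \cref{sec:array-systems}, of the $r$ fixed blocks $W_t:=\fai_{m,n_0}(c_{m,t})$ over $V(G_{n_0})$, where $\fai_{m,n_0}(e_m)=e_{n_0}$ contributes only the single letter $v_{n_0,0}$. I will call a circuit \emph{substantial} if its $n_0$-projection contains a non-central letter and \emph{trivial} otherwise; trivial blocks are runs of $v_{n_0,0}$ and behave exactly like stretches of the central loop. Writing $\lambda_m$ for the minimal $n_0$-length among substantial $m$-circuits, I would choose $m=m(L)$ minimal with $\lambda_m>L$. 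This is legitimate because a substantial $(m{+}1)$-circuit decomposes as $\fai_{m+1}(c_{m+1,t})=e_m\,d_2\cdots d_{h-1}\,e_m$ with $h\ge 3$ and at least one substantial $d_i$, so its $n_0$-length is at least $\lambda_m+2$; hence $\lambda_m\to\infty$ (indeed $\lambda_m\ge\lambda_{n_0+1}+2(m-n_0-1)$) and $m(L)=O(L)$.

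The counting step rests on a dichotomy. Fix a length-$L$ window $w$. Read at level $m(L)$, every substantial block it meets has length $>L$, so $w$ can neither contain a full substantial block nor meet three of them; thus $w$ has the two-boundary form (suffix of $W_a$)$\,(v_{n_0,0}^{\,g})\,$(prefix of $W_b$) for substantial $a,b\in\sC_m$ and a central run of length $g$ (any of the three pieces possibly empty, the central run absorbing intervening trivial blocks). A suffix of the fixed word $W_a$ is determined by its length $\le L$, and likewise a prefix of $W_b$, giving at most $r^2(L+1)^2$ windows from level $m$. The remaining windows lie strictly inside a single substantial block $W_c$; applying the same dichotomy to the internal $(m{-}1)$-block structure of the fixed word $W_c$ either puts $w$ in two-boundary form at level $m-1$ or traps it inside a single substantial $(m{-}1)$-block, and so on. Since block lengths strictly decrease, the recursion terminates in at most $m(L)-n_0$ steps (a window trapped in a block of length $\le L$ must equal that block, contributing only $O(r)$ words per level). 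Summing over levels yields $P(L)\le O(r^2L^2)\cdot m(L)+O(1)=O(L^3)$, whence $h(f)=\lim_L(1/L)\log P(L)=0$.

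The hard part will be the bookkeeping around the \emph{short} blocks. The central loop and the trivial circuits have $n_0$-length that need not grow, so the naive "few long blocks per window" count fails unless these are folded uniformly into the central runs, and the recursion into a single long block must be arranged so that only the polynomially-many two-boundary patterns are ever counted (never the raw factors of a very long fixed block, whose number is uncontrolled). The growth estimate $\lambda_m\to\infty$ — forced already by the bracketing central loops, and consistent with the bounded-repetition fact of \cref{rem:continuation-upper-bound} — is exactly what keeps the recursion depth $O(L)$ and delivers the polynomial bound; verifying this normalization carefully (including that trivial circuits can be treated as central behaviour after telescoping) is where the real work lies.
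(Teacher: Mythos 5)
Your reduction to symbolic complexity via \cref{thm:finite-expansive-proximal} is fine, but the counting step has a genuine gap, and the polynomial bound $P(L)=O(L^3)$ you aim for is in fact \emph{false} for this class of systems. The failure occurs at the first step of your downward recursion: since $m(L)$ was chosen \emph{minimal} with $\lambda_{m(L)}>L$, at level $m(L)-1$ some substantial blocks have length $\le L$. A window of length $L$ lying strictly inside a substantial $m(L)$-block $W_c$ can therefore contain \emph{several complete} substantial $(m(L)-1)$-blocks, separated by central runs whose lengths are the spacer exponents of the covering; such a window is neither in two-boundary form nor trapped inside a single lower block, so your dichotomy never applies to it. The number of these windows is governed by the number of distinct consecutive windows of the spacer sequence, which need not be polynomial in $L$. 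Concretely, take the rank-2 proximal covering with $l(c_1)=2$ in which, for every $n$, the interior spacer sequence $(a(n,1),\dotsc,a(n,b(n)-1))$ is a binary de Bruijn sequence of order $k_n:=l(c_n)$. Writing $W_n:=\fai_{n,1}(c_n)$ and $0$ for the central letter, the word $\fai_{n+1,1}(c_{n+1})$ contains the factor $W_n0^{g_1}W_n0^{g_2}\dotsb 0^{g_{k_n}}W_n$ for \emph{every} $(g_1,\dotsc,g_{k_n})\in\{0,1\}^{k_n}$; restricting to spacer vectors with exactly $\lfloor k_n/2\rfloor$ ones gives $\binom{k_n}{\lfloor k_n/2\rfloor}$ pairwise distinct words of one common length $L_n\le 2\,l(c_n)^2$ (the spacer vector can be parsed back out of the word because $W_n$ is a fixed word and consecutive block starts differ by $l(c_n)+g_i$). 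Hence $P(L_n)\ge 2^{c\sqrt{L_n}}$ for some $c>0$: the complexity of a Cantor proximal system of rank 2 can be super-polynomial, so no amount of bookkeeping will rescue an $O(L^3)$ bound; only subexponential bounds hold, and that is all one needs.

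The paper's proof avoids this trap by never tying the level to $L$ and never asking for a polynomial count. It fixes a level $n$, bounds the number of $(n,L)$-names by $N_{n,L}$, the number of walks of length $L$ in $G_n$ (a count that automatically absorbs every spacer configuration you were trying to enumerate), and uses $h(f)\le\limsup_n\limsup_{L\to\infty}(1/L)\log N_{n,L}$. For each fixed $n$ the inner rate is strictly positive, namely $\log\rho_n$ with $\rho_n$ the spectral radius of $G_n$; the point is that $\rho_n\to1$ as $n\to\infty$, because the number of circuits stays bounded by the rank while all non-trivial circuit lengths diverge --- your own estimate $\lambda_m\ge\lambda_{n_0+1}+2(m-n_0-1)$ is exactly the ingredient needed here. (The paper phrases the last step loosely, asserting the inner rate vanishes for each $n$; what is true and what the displayed inequality uses is that the rates tend to $0$ along $n$.) If you wish to keep your block decomposition, the correct repair is: for a \emph{fixed} level $n$, every length-$L$ window is a concatenation of two boundary pieces, at most $L/\lambda_n$ full blocks ($r$ choices each), and central runs whose lengths form a composition of an integer $\le L$ into at most $L/\lambda_n+2$ parts; the exponential growth rate in $L$ of the resulting bound is $O((\log\lambda_n)/\lambda_n)$, which tends to $0$ as $n\to\infty$. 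Letting the exponential rate go to zero with the level, instead of forcing a polynomial bound at a single $L$-dependent level, is what makes the argument close.
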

\begin{proof}
Let $r > 1$ be the rank of $\Gcal$.
For $n \bpi$ and $l > 0$,
 let 
\[N_{n,l} := \num{\seb w \mid w \text{ is a walk in } G_n \text{ with }
 l(w) = l \sen}.\]
Then, $h(f) \le \limsup_{n \to \infty}
 \limsup_{l \to \infty}(1/l)\log(N_{n,l})$.
Because $l(c_{n,i}) \to \infty$ ($1 \le i \le r$) uniformly as $n \to \infty$,
 we get $\limsup_{n \to \infty}\limsup_{l \to \infty}(1/l)\log(N_{n,l}) = 0$.
\end{proof}

\subsection{Rank 2 implies residually scrambled.}
\label{subsec:representation}
Let $\Gcal : \covrepa{G}{\fai}$
 be a \bidirectionals proximal KR-covering of rank 2.
We recall that $p \in X$ is the unique fixed point.
By telescoping, we can write $\sC_n = \seb e_n, c_n \sen$
 for all $n \bpi$.
Further, for all $n \bpi$, we can write 
\[
 \fai_{n+1}(c_{n+1}) =
 e_n^{a(n,0)}\ \prod_{j = 1}^{b(n)} \left(c_n e_n^{a(n,j)}\right)
\]
 where $a(n,0) > 0$, $a(n,j) \ge 0 \text{ for each } 0 < j < b(n)$,
 $a(n,b(n)) > 0$, and $b(n) \ge 1$.
The next lemma is obvious:
\begin{lem}\label{lem:final-series}
For all $m > n$, it follows that
 $\fai_{m,n}(c_m) = e_n^{m-n}\ \dotsb\ e_n^{m-n}$.
\end{lem}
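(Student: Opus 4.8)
The displayed equation should be understood as the statement that, once $\fai_{m,n}(c_m)$ is written out as a concatenation of the two circuits $e_n$ and $c_n$ of $G_n$, the word $e_n^{m-n}$ appears both as a prefix and as a suffix. My plan is to prove this by induction on the gap $d := m-n \ge 1$, carried out uniformly in $n$. Throughout I rely on two facts. First, $\fai_{k+1}(e_{k+1}) = e_k$ for every $k \bpi$ (\cref{rem:proximal-KR}), so that $\fai_{m,n}(e_m) = e_n$: the central period-$1$ circuit is always sent to the central period-$1$ circuit. Second, a graph homomorphism carries a concatenation of walks to the concatenation of the images (\cref{nota:projected-walk}); since each $\fai_{k+1}$ sends the center $v_{k+1,0}$ to the center $v_{k,0}$, it therefore acts on any product of circuits of $G_{k+1}$ one circuit at a time, replacing each $e_{k+1}$ by $e_k$ and each $c_{k+1}$ by the word $\fai_{k+1}(c_{k+1})$.

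I would first record the auxiliary observation that $\fai_{m,n}(c_m)$ always contains at least one letter $c_n$ (equivalently, it is not a pure power of $e_n$): this follows by a short downward induction from the fact that every $\fai_{k+1}(c_{k+1})$ contains $b(k) \ge 1$ copies of $c_k$. For the base case $d=1$ of the main induction, $\fai_{n+1,n}(c_{n+1}) = \fai_{n+1}(c_{n+1}) = e_n^{a(n,0)}\ \prod_{j=1}^{b(n)}(c_n e_n^{a(n,j)})$ begins with $e_n^{a(n,0)}$ and ends with $e_n^{a(n,b(n))}$, and $a(n,0),a(n,b(n)) \ge 1$ give prefix and suffix $e_n^{1}$. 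For the step, assume the claim for gap $d$ at every level and set $m = n+d+1$. Using $\fai_{m,n} = \fai_{n+1} \circ \fai_{m,n+1}$, put $W := \fai_{m,n+1}(c_m)$; by the inductive hypothesis $W$ begins with $e_{n+1}^{d}$, and by the auxiliary observation its maximal leading run of $e_{n+1}$'s, of some length $\ell \ge d$, is immediately followed by a letter $c_{n+1}$. Applying $\fai_{n+1}$ letter by letter, this leading run becomes $e_n^{\ell}$ and the following $c_{n+1}$ becomes a word beginning with $e_n^{a(n,0)}$; hence $\fai_{m,n}(c_m)$ begins with $e_n^{\ell + a(n,0)}$, and $\ell + a(n,0) \ge d+1 = m-n$. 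The mirror-image argument, using that $W$ ends with $e_{n+1}^{d}$ and that $\fai_{n+1}(c_{n+1})$ ends with $e_n^{a(n,b(n))}$ with $a(n,b(n)) \ge 1$, gives the suffix $e_n^{m-n}$, completing the induction.

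I do not expect any genuine difficulty; the single point that must not be overlooked is the auxiliary observation that $\fai_{m,n}(c_m)$ is not a pure power of $e_n$, since otherwise the leading and trailing $e_n$-runs could in principle coincide and the claim about a separate prefix and suffix would degenerate. Tracking the lengths a little more carefully — peeling off the top map $\fai_{m+1}$ instead of the bottom one — even yields the exact values: the leading $e_n$-run of $\fai_{m,n}(c_m)$ has length $\sum_{k=n}^{m-1} a(k,0)$ and the trailing run has length $\sum_{k=n}^{m-1} a(k,b(k))$. Both are $\ge m-n$ and are in general strictly larger, so the displayed identity records exactly the uniform lower bound $e_n^{m-n}$, whose real use is that these boundary runs of $e_n$ grow without bound as $m \to \infty$.
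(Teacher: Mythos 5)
Your proof is correct, and it reads the lemma the right way: as the assertion that the leading and trailing runs of $e_n$ in $\fai_{m,n}(c_m)$ have length at least $m-n$ (your exact values $\sum_{k=n}^{m-1}a(k,0)$ and $\sum_{k=n}^{m-1}a(k,b(k))$ confirm this is only a uniform lower bound). The paper offers no argument at all here---its proof is literally ``We omit the proof''---and your induction on $m-n$, resting on $a(k,0),a(k,b(k))\ge 1$, on $\fai_{k+1}(e_{k+1})=e_k$, and on the observation that $\fai_{m,n+1}(c_m)$ is not a pure power of $e_{n+1}$, is precisely the routine verification the author is leaving to the reader.
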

\begin{proof}
We omit the proof.
\end{proof}
By \cref{thm:finite-expansive-proximal}, it is clear that
 $(X,f)$ is expansive.
In the case of topological rank 2, we show that the level of expansiveness
 is from $X[1]$ as follows:
\begin{prop}\label{prop:expansive-from-1}
Let $\Gcal : \covrepa{G}{\fai}$
 be a \bidirectionals proximal KR-covering of rank 2.
We assume that every $G_n$ ($n \bpi$) has rank 2.
Let $(X,f)$ be the inverse limit.
Then, $(X,f)$ is topologically conjugate to $(\barX_1,\sigma)$.
\end{prop}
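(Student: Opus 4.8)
The plan is to carry out everything inside the array system of \cref{sec:array-systems}. The assignment $x \mapsto \barx$ is a continuous, $\sigma$-commuting injection of $(X,f)$ onto $(\barX,\sigma)$: it is injective because each row $\barx[n]$, together with the intrinsically recoverable positions of its cuts, determines the vertex sequence $\ddx$, and $\ddx$ determines $x$. Since $\barx$ is exactly the family of its rows $(\barx[n])_{n}$, we have $\barX\cong\varprojlim_{n}\barX_n$ with bonding maps $\pi_{n+1,n}$, and $\pi_1=\pi_{n,1}\circ\pi_n$ for every $n$. As $\pi_1\colon\barX\to\barX_1$ is already a continuous surjection commuting with $\sigma$ and $\barX,\barX_1$ are compact metric, it suffices to prove that $\pi_1$ is injective, and for that it is enough to show each $\pi_{n+1,n}\colon\barX_{n+1}\to\barX_n$ is injective for $n\ge 1$: a trivial induction from $\barx[1]=\barx'[1]$ then gives $\barx[n]=\barx'[n]$ for all $n$, i.e. $\barx=\barx'$, hence $x=x'$.

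So fix $n\ge 1$ and suppose $\barx[n]=\barx'[n]=:\gamma$; the task is to reconstruct $\barx[n+1]$ from $\gamma$. First recode $\gamma$ as the sequence of level-$n$ circuit \emph{traversals} over $\seb e_n,c_n\sen$: a maximal run of $c_n$'s has length a multiple of $l(c_n)$ and splits into exactly that many traversals, so this reduced sequence and all its cuts are determined by $\gamma$ and the fixed number $l(c_n)$. Now $\fai_{n+1}(e_{n+1})=e_n$, while $\fai_{n+1}(c_{n+1})=e_n^{a(n,0)}\prod_{j=1}^{b(n)}\bigl(c_n e_n^{a(n,j)}\bigr)$ carries exactly $b(n)\ge 1$ traversals of $c_n$. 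Hence every $c_n$ in $\gamma$ comes from a $c_{n+1}$-block, and the $c_n$'s fall into consecutive batches of $b(n)$, one batch per $c_{n+1}$. If $\gamma$ has no $c_n$ at all, then $\barx[n+1]$ and $\barx'[n+1]$ can contain no $c_{n+1}$ (which would produce a $c_n$), so both are the constant sequence $e_{n+1}$ and agree. Otherwise, once the batching (the ``phase'') is fixed, each $c_{n+1}$-block is pinned down, and the number of interposed $e_{n+1}$'s in an inter-block run of $e_n$'s of length $g$ is forced to be $g-a(n,0)-a(n,b(n))$, the two known boundary constants $a(n,0),a(n,b(n))>0$ accounting for the remainder; thus $\barx[n+1]$ is completely determined.

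The crux is therefore the uniqueness of the phase, and this is where I expect the real work to lie. A valid phase forces every non-inter-block gap between consecutive $c_n$'s to equal a prescribed constant $a(n,j)$ and every inter-block gap to be $\ge a(n,0)+a(n,b(n))$. If two distinct phases were simultaneously valid, then, since no index can be inter-block for both (the phases differ modulo $b(n)$), every gap would be pinned to one of the fixed constants $a(n,j)$; the gap sequence, hence $\gamma$ and hence $x$, would be periodic. But $(X,f)$ is proximal, so by \cref{thm:proximal-system} its only minimal set is the fixed point $p$, whence any periodic point must coincide with $p$; the row of $p$ is the constant $e_n$ and contains no $c_n$, contradicting that $\gamma$ has a $c_n$. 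Thus the phase is unique, $\barx[n+1]=\barx'[n+1]$, and $\pi_{n+1,n}$ is injective. Feeding this back into the induction yields $\pi_1$ injective, and therefore $(X,f)\cong(\barX,\sigma)\cong(\barX_1,\sigma)$; proximality (through the absence of nontrivial periodic orbits) is the one genuinely used hypothesis, everything else being bookkeeping in the array system.
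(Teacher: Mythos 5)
The one genuine gap is the chain ``the gap sequence, hence $\gamma$ and hence $x$, would be periodic.'' Periodicity of the single row $\gamma=\barx[n]$ does not imply periodicity of the point $x$: if $\sigma^P\gamma=\gamma$, all you know is that $x$ and $f^P(x)$ have the same $n$-th row, and upgrading this to $x=f^P(x)$ is precisely the injectivity you are in the middle of proving, so the inference is circular. It is also false as a general statement about array systems: in the Bratteli--Vershik representation of an infinite odometer (rank $1$), every row of every point is the constant sequence $c_n c_n c_n\dotsb$, hence shift-periodic, while no point of the odometer is periodic. The repair is short and uses your own idea one level down: the map $x\mapsto\barx[n]$ is continuous, shift-commuting and onto $\barX_n$, so $(\barX_n,\sigma)$ is a factor of $(X,f)$; a factor of a proximal system is proximal (any pair in the factor lifts to a pair in $X$, whose proximality pushes down), so by \cref{thm:proximal-system} the unique minimal set of $(\barX_n,\sigma)$ is a fixed point, namely the constant sequence $e_n e_n e_n\dotsb$, and consequently the only shift-periodic element of $\barX_n$ is that constant sequence. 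Your $\gamma$ is shift-periodic yet contains an occurrence of $c_n$, which is the desired contradiction. So the proximality input must be applied to $\gamma$ inside the factor $\barX_n$, not to $x$ inside $X$.

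With that substitution your argument is correct, and it is a genuinely different route from the paper's. The paper proves injectivity of the row-$1$ projection in one step: given $\barx[n]\ne\bary[n]$, it recodes row $n$ by marking the runs of $e_n$ of length $\ge l(c_n)$ and uses \cref{lem:final-series} to force a disagreement already in row $1$; no induction over levels and no periodicity dichotomy appears. Your inductive reduction to injectivity of each $\pi_{n+1,n}$, the intrinsic recovery of the $n$-cuts (maximal $c_n$-runs are finite by \cref{rem:continuation-upper-bound} and consist of whole traversals), the batching of traversals into blocks of $b(n)$, and the fact that a batching pins down row $n+1$ are all fine, and the unique-desubstitution statement you isolate is arguably more conceptual: it is a recognizability property of the substitution, with aperiodicity supplied by proximality. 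Two small points to add for completeness: your modular description of phases tacitly assumes bi-infinitely many $c_n$-traversals, whereas if the traversals are bounded on one side the batching is anchored at the extreme traversal and is unique for trivial reasons; and in the case $b(n)=1$ distinct phases cannot exist at all, so the modular argument is only needed when $b(n)\ge 2$.
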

\begin{proof}
It follows that $(X,f)$ is topologically conjugate to the array system
 $(\barX,\sigma)$.
Let $x, y \in X$ be distinct elements.
Then, there exists an $n \ge 1$ such that $\barx[n] \ne \bary[n]$.
First, exchange any continuous occurrence of $e_n$ of length $\ge l(c_n)$
 by continuous occurrence of a symbol $a$ of the same length.
Other positions should be changed to the symbol $b$.
This transformation is denoted as $\alpha : \barX_n \to \seb a,b \sen^{\Z}$.
By \cref{lem:final-series},
 for each $\barz[n] \in \barX_n$, there exists an arbitrarily
 large continuation of $a$ in $\alpha(\barz[n])$.
Further, there exists no infinite continuation of the symbol $b$ in 
 $\alpha(\barz[n])$.
Suppose that $\alpha(\barx[n]) \ne \alpha(\bary[n])$.
Then, some continuation of $a$ of one of $\barx[n],\bary[n]$ must encounter
 an occurrence of $b$ of the other at the same position.
Without loss of generality, $(\barx[n])[s,t]$ is a continuation of $e_n$ of
 length $\ge l(c_n)$, and there exists $s \le u \le t$ such that
 $(\bary[n])(u) = c_n$.
If $(\bary[n])[s,t]$ is a continuation of $c_n$, then $(\bary[n])[s,t]$
 has to contain the whole of $c_n$.
Thus, $(\pi_{n,1}(\bary[n]))[s,t] = (\bary[1])[s,t]$ has to contain $c_1$.
Thus, we get $\barx[1] \ne \bary[1]$.
If $(\bary[n])[s,t]$ is not a continuation of $c_n$,
 then a continuation of $c_n$ ends at some $u'$ with $s \le u' < t$
 and $(\bary[n])(u'+1) = e_n$,
 or a continuation of $c_n$ begins from some $u'$ with $s < u' \le t$
 and $(\bary[n])(u'-1) = e_n$.
In both cases, we get $\barx[1] \ne \bary[1]$.
Thus, we only need to consider the case in which
 $\alpha(\barx[n]) = \alpha(\bary[n])$.
Because $\barx[n] \ne \bary[n]$, there exists an $i_0 \bi$ such that
 $\alpha(\barx[n])(i_0) = \alpha(\bary[n])(i_0) = b$ and
 $(\barx[n])(i_0) \ne (\bary[n])(i_0)$.
Take the maximal interval $i_0 \in [s,t]$ such that
 $\alpha(\barx[n])(i) = b$ for all $i \in [s,t]$.
Then, $\alpha(\bary[n])(i) = b$ for all $i \in [s,t]$.
It follows that in both $\barx[n]$ and $\bary[n]$, $s$ is the initial 
 position of the whole circuit $c_n$.
Nevertheless,
 there exists the least $j \ge s$ such that
 $(\barx[n])(j) \ne (\bary[n])(j)$.
Now, it is easy to see that $(\barx[1])[s,t] \ne (\bary[1])[s,t]$.
\end{proof}
Suppose that $b(n) > 1$ only for finitely many times.
Then, by telescoping, we have a representation by \bidirectionals KR-covering
 with $b(n) = 1$ for all $n \bpi$.
In this case, it is easy to see that the inverse limit $(X,f)$ consists of
 only one orbit except the unique fixed point $p$.
In this paper, we avoid this case:
\begin{nota}\label{nota:Cantor}
Hereafter,
 we consider rank 2 proximal systems that are Cantor systems.
\end{nota}
By \cref{thm:finite-ergodicity},
 there exist at most $2$ ergodic measures in $(X,f)$.
%
%
By \cref{prop:expansive-from-1}, the inverse limit $(X,f)$ of $\Gcal$
 is isomorphic to $(\barX_1,\sigma)$.
Thus, $(X,f)$ is isomorphic to a subshift of two symbols
 $\seb e_1, c_1 \sen$.
By \cref{prop:topological-entropy-0}, they have topological entropy $0$.
%
%
%

We would like to show that the substitution subshift that is mentioned in \cite[Proposition 55]{Blanchard_2008TopSizeOfScrambSet} has topological rank 2.
For $l \ge 1$, we define
 $W^l :=
 \seb a_1 a_2 \dotsb a_l \mid
 a_i \in \seb 0,1 \sen \text{ for all } 1 \le i \le l \sen$.
Further, we define $W^* := \bigcup_{l \ge 1}W^l$.
The substitution dynamical system given
 in \cite[Proposition 55]{Blanchard_2008TopSizeOfScrambSet}
 is defined by the non-primitive substitution
 $\tau : \seb0, 1\sen  \to W^∗$ as
 $\tau (0) = 001$ and $\tau(1) = 1$.
If we define $\alpha : \seb 0,1 \sen \to W^*$
 by setting $\alpha := \tau^2$, then we get
 $\alpha(1) = 1$ and $\alpha(0) = 0010011$.
By $\Lambda_{\alpha}$, we denote the set of all $x \in \seb 0,1 \sen^{\Z}$
 such that every finite sub-block of $x$ is contained in $\alpha^k(0)$
 for some $k > 0$.
Then, the subshift $(\Lambda_{\alpha},\sigma)$ coincides
 with the substitution
 subshift defined in \cite[Proposition 55]{Blanchard_2008TopSizeOfScrambSet}.
We show the following:
\begin{prop}\label{prop:proposition55-is-rank-2}
The substitution dynamical system $(\Lambda_{\alpha},\sigma)$ has 
 topological rank $2$.
\end{prop}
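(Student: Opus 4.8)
The plan is to realise $(\Lambda_{\alpha},\sigma)$ as the inverse limit of an explicit \bidirectionals proximal KR-covering with $\hash\sC_{n}=2$, and to pin the rank from both sides; throughout I identify the period-$1$ circuit $e_{n}$ with the fixed letter $1$ and the nontrivial circuit $c_{n}$ with $0$. First I would record the qualitative structure, working with $\tau$ (so $\alpha=\tau^{2}$ and $\Lambda_{\tau}=\Lambda_{\alpha}$, since the two substitutions have the same language). The point $1^{\infty}$ is a fixed point of $\sigma$ in $\Lambda_{\alpha}$, because arbitrarily long blocks $1^{m}$ occur in the words $\alpha^{k}(0)$, and I claim $\{1^{\infty}\}$ is the unique minimal set. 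The mechanism is that a maximal run of $1$'s lengthens by exactly one under $\tau$ (as $\tau(0\,1^{j})=001\,1^{j}=00\,1^{j+1}$), whereas $0$-runs never exceed length $2$, so $0^{\infty}\notin\Lambda_{\alpha}$. Desubstituting (using recognizability of $\tau$), any $x\neq 1^{\infty}$ must contain arbitrarily long runs of $1$'s, for otherwise its $N$-fold desubstitution would avoid the letter $1$ entirely and hence be $0^{\infty}$. Thus $1^{\infty}\in\overline{\{\sigma^{n}(x)\mid n\bni\}}$ for every $x$, so by \cref{thm:proximal-system} the system is proximal; being an infinite Cantor system it then has topological rank at least $2$, since a rank-$1$ proximal system is a single fixed point.

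For the upper bound I would pass to the conjugate substitution $\beta$ given by $\beta(0)=1001001$ and $\beta(1)=1$. A direct check gives the identity $1\cdot\alpha(a)=\beta(a)\cdot1$ for $a\in\{0,1\}$; since every $\alpha(a)$ ends in $1$ and every $\beta(a)$ begins with $1$, this identity transfers between the languages and yields $L_{\beta}=L_{\alpha}$, hence $\Lambda_{\beta}=\Lambda_{\alpha}$ as subshifts of $\{0,1\}^{\Z}$. The point of $\beta$ is that $\beta(0)=1001001$ begins \emph{and} ends with $1$: in circuit notation it reads $e_{n}c_{n}c_{n}e_{n}c_{n}c_{n}e_{n}$, which is exactly the normal form $e_{n}^{a(n,0)}\prod_{j=1}^{b(n)}(c_{n}e_{n}^{a(n,j)})$ with $a(n,0)=a(n,b(n))=1>0$ and $b(n)=4$.

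I would then define $\Gcal$ by $\sC_{n}=\{e_{n},c_{n}\}$, $\fai_{n+1}(e_{n+1})=e_{n}$, and $\fai_{n+1}(c_{n+1})=e_{n}c_{n}c_{n}e_{n}c_{n}c_{n}e_{n}$. Because each projected circuit starts and ends with the period-$1$ circuit $e_{n}$, the only branching, at the centre, makes $\fai_{n+1}$ \pdirectionals and, by the symmetric condition, \bidirectional; together with edge-surjectivity this exhibits $\Gcal$ as a \bidirectionals proximal KR-covering of rank $2$, so its associated Bratteli diagram has $\hash\sC_{n}=2$ and rank $\le 2$. To identify the inverse limit $(X,f)$, I would invoke \cref{prop:expansive-from-1}: $(X,f)$ is conjugate to its level-$1$ presentation, and labelling the centre and the vertices of $c_{1}$ by the letters of $1$ and $1001001$ turns that presentation, through the radius-$0$ sliding-block code reading the current vertex label, into the two-sided subshift over $\{0,1\}$ whose language is generated by $\beta$, namely $(\Lambda_{\beta},\sigma)=(\Lambda_{\alpha},\sigma)$. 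Injectivity of the code is precisely recognizability of $\beta$. With the lower bound already in hand, the topological rank is exactly $2$.

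I expect the main obstacle to be the two bookkeeping points that make this identification exact rather than merely approximate: proving $L_{\beta}=L_{\alpha}$ carefully from $1\cdot\alpha(a)=\beta(a)\cdot1$, where the prepended and deleted $1$ must be tracked through the inductive closure of the two languages under $\alpha$ and $\beta$ (using that $\alpha(a)$ ends and $\beta(a)$ begins with $1$); and establishing the recognizability (unique decomposition into the blocks $1001001$ and $1$) that underlies both the conjugacy onto $\Lambda_{\alpha}$ and the desubstitution step in the proximality argument. By contrast, verifying that $\Gcal$ is a cover and is \bidirectionals is routine once the circuits are written in normal form.
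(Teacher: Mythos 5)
Your proposal is correct and follows essentially the same route as the paper: the same auxiliary substitution $\beta(0)=1001001$, the same key identity (your relation $1\cdot\alpha(a)=\beta(a)\cdot 1$ iterates to $1^{k}\alpha^{k}(0)=\beta^{k}(0)1^{k}$, which is exactly the paper's inductive identity $\alpha^{k}(1^{l}0)=\beta^{k}(1^{l-k}01^{k})$), and the same rank-2 KR-covering recursion $\fai_{n+1}(c_{n+1})=e_{n}c_{n}^{2}e_{n}c_{n}^{2}e_{n}$, with your explicit lower-bound argument (rank $\ge 2$ via proximality) being the only addition to what the paper leaves implicit. The one point of divergence is self-inflicted: by labelling the vertices of a length-$7$ circuit $c_{1}$ with the letters of $1001001$ you create the recognizability/unique-parsing obligation you flag as your main obstacle, whereas the paper takes $l(c_{1})=2$ (so a traversal of $c_{1}$ reads $00$) and identifies $(\barX_{1},\sigma)$ with $\Lambda_{\beta}$ by the bijective letter exchange $e_{1}\leftrightarrow 1$, $c_{1}\leftrightarrow 0$, whose injectivity is automatic.
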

\begin{proof}
We define a substitution as $\beta(1) = 1$ and $\beta(0) = 1001001$.
If we use $\beta$ in place of $\alpha$, we get the set $\Lambda_{\beta}$.
First, we show that $\Lambda_{\alpha} = \Lambda_{\beta}$.
As usual, we can extend the map $\alpha$ to the map
 $\alpha : W^* \to W^*$ such that for all $l \ge 1$,
 $\alpha(a_1 a_2 \dotsb a_l) = \alpha(a_1) \alpha(a_2) \dotsb \alpha(a_l)$.
We also extend the map $\beta$ to the map $\beta : W^* \to W^*$.
It is evident that $\alpha^k(1)=1$ and $\beta^k(1)=1$ for all $k > 0$.
It is easy to check the calculation
 $\beta(\alpha(0))=\alpha(\beta(0))$.
Thus, it is evident that $\beta \circ \alpha = \alpha \circ \beta$.
For $k \ge 1$, we denote $1^k := \underbrace{11\dotsb 1}_k$.
It is easy to see that for all $l \ge 1$, there exists $k > 0$ such that
 the block $1^l 0$ appears in $\alpha^k(0)$.
For $l > k \ge 1$, we show that $\alpha^k(1^l 0) = \beta^k(1^{l-k}0 1^k)$.
For $k=1$, we calculate
\[\alpha(1^l 0) = 1^l \alpha(0) = 1^1\ 0010011
 = 1^{l-1}\ 1001001\ 1 = 1^{l-1}\ \beta(0)\ 1.\]
Thus, the claim is satisfied.
Assume that the claim is satisfied for some $k \ge 1$.
For $l > k+1$, we calculate
\begin{equation}
\begin{split}
\alpha^{k+1}(1^l 0) & = \alpha^k(1^{l-1} \alpha(10))
  = 1^{l-1} \alpha^k(10010011)\\
 & = 1^{l-1} \alpha^k(\beta(0)) 1  = 1^{l-1} \beta(\alpha^k(0)) 1 = \beta(\alpha^k(1^{l-1} 0)) 1\\
 &  = \beta(\beta^k(1^{l-1-k} 0 1^k) 1
 = \beta^{k+1}(1^{l-k-1} 0 1^{k+1}).
\end{split}
\end{equation}
This shows that the claim is satisfied for $k+1$.
Now, it is easy to see that $\Lambda_{\alpha} = \Lambda_{\beta}$.
Hereafter, we construct a graph covering such that
 the inverse limit is topologically conjugate to $\Lambda_{\beta}$.
Let $l(c_1) = 2$,
 $\fai_{2}(c_{2}) = e_1 c_1 e_1 c_1 e_1$, and
 $\fai_{n+1}(c_{n+1}) = e_n c_n^2 e_n c_n^2 e_n$ for $n \ge 2$.
Let $(X,f)$ be the inverse limit of this graph covering.
By exchanging $e_1$ with $1$ and $c_1$ with $0$ in $\barX_1$, we get a subshift
 $\Lambda$.
It is not difficult to see that $\Lambda = \Lambda_{\beta}$.
\end{proof}
\begin{nota}
We denote 
 $W^s(p) := \seb x \in X \mid \lim_{i \to +\infty}f^i(x) = p \sen$,
 and 
 $W^u(p) := \seb x \in X \mid \lim_{i \to +\infty}f^{-i}(x) = p \sen$.
\end{nota}
\begin{lem}\label{lem:positively-expansive}
If $x,y \in X \setminus W^s(p)$ with $x \ne y$,
 then $\limsup_{i \to +\infty}d(f^i(x),f^i(y)) > 0$.
If $x,y \in X \setminus W^u(p)$ with $x \ne y$,
 then $\limsup_{i \to +\infty}d(f^{-i}(x),f^{-i}(y)) > 0$.
\end{lem}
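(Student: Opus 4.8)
The plan is to translate everything into the symbolic (array) picture and reduce the two assertions to a single fact about forward tails. By \cref{prop:expansive-from-1}, $(X,f)$ is topologically conjugate to the two-sided subshift $(\barX_1,\sigma)$ over $\seb e_1,c_1\sen$; under this conjugacy, for $x\neq y$ one has $\limsup_{i\to+\infty}d(f^i(x),f^i(y))=0$ precisely when the rows $\barx[1]$ and $\bary[1]$ agree on a forward half-line, i.e. $\barx(1,i)=\bary(1,i)$ for all large $i$. So the first assertion is equivalent to the claim that a \emph{distinct forward-asymptotic pair} must meet $W^s(p)$.

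First I would pin down $W^s(p)$ combinatorially. The nested clopen sets $U(v_{n,0})$ form a neighbourhood basis of $p$, so $f^i(x)\to p$ iff for every $n$ one has $\ddx(n,i)=v_{n,0}$ for all large $i$; since the only edge-loop at the centre is $e_n$, this says $\barx(n,i)=e_n$ for all large $i$. From $\fai_{n+1}(e_{n+1})=e_n$ and the recorded form of $\fai_{n+1}(c_{n+1})$ one gets the dichotomy that $c_n$ occurs infinitely often forward either for every $n$ or for none, whence $x\in W^s(p)$ iff $c_1$ occurs only finitely often forward. In particular, membership in $W^s(p)$ depends only on the forward half-line, so a forward-asymptotic pair lies either wholly in or wholly out of $W^s(p)$. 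It thus suffices to exclude a distinct forward-asymptotic pair lying outside $W^s(p)$ — equivalently, to prove that for $x\notin W^s(p)$ the forward half-line of $\barx[1]$ determines $x$.

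So suppose $x\neq y$ are forward-asymptotic with $c_n$ occurring infinitely often forward in the common tail (every $n$). Since $\barx[1]$ determines $x$ and hence $\barx[n]$ (\cref{prop:expansive-from-1}), we have $\barx[n]\neq\bary[n]$ for every $n$; let $D_n$ be the last coordinate at which they disagree (finite, by forward agreement). A run-length argument — a maximal block of $c_n$'s has length a multiple of $l(c_n)$ and ends at the next, necessarily common, $e_n$ — shows that $D_n+1$ is a common $n$-cut and that at $D_n$ one of the two points carries $c_n$ and the other $e_n$. Granting that $x$ and $y$ share the first $(n+1)$-cut $\nu$ after $D_n$, the $(n+1)$-block ending at $\nu$ is a $c_{n+1}$-block in both points (were the $e_n$-point to carry an $e_{n+1}$-spacer at $D_n$, its first $(n+1)$-cut after $D_n$ would be $D_n+1$, while the $c_n$-point's is larger, contradicting commonness of $\nu$); hence both blocks have length $l(c_{n+1})$, start at the same coordinate, and assign the same level-$n$ symbol to $D_n$, contradicting the disagreement there.

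The crux — and the step I expect to be hardest — is the granted commonness of the cut $\nu$, that is, recognizability of the desubstitution of the common forward tail near its left end. The sliding-block recoding between $\barX_1$ and $\barX_{n+1}$ (both conjugate to $(X,f)$) only forces the $(n+1)$-cuts to agree from about $D_n+R_{n+1}$ onward, whereas $\nu$ lies within one block-length of $D_n$. To bridge this I would use \cref{lem:final-series}: every $c_{n+1}$-block opens and closes with $e_n$-runs $e_n^{a(n,0)}$, $e_n^{a(n,b(n))}$ (with $a(n,0),a(n,b(n))\geq 1$, and with boundary runs that lengthen as the height grows), so after telescoping the covering — which alters neither the system nor $W^s(p)$ — the $(n+1)$-blocks become recognizable from the level-$n$ data and their boundaries, hence $\nu$, are determined unambiguously; expansiveness of $(X,f)$ (\cref{thm:finite-expansive-proximal}) is the structural reason such recognizability is available. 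The backward assertion then follows at once by applying the forward one to $f^{-1}$, whose associated KR-covering is the rank-$2$ reversal of $\Gcal$ and for which $W^s(p)$ is replaced by $W^u(p)$.
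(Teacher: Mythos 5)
Your reduction to the array picture, your combinatorial description of $W^s(p)$ (namely $x \in W^s(p)$ iff $c_1$ occurs only finitely often forward), the existence of the last disagreement $D_n$, and the conditional argument ``common first $(n+1)$-cut $\nu$ after $D_n$ implies contradiction'' are all sound, and they run parallel to the paper's own proof. But the step you yourself flag as the crux --- that $x$ and $y$ share the first $(n+1)$-cut after $D_n$ --- is a genuine gap, and the bridge you propose does not close it. ``Recognizability of the $(n+1)$-blocks from the level-$n$ data'' is false in general for these coverings: take $a(n,1) = a(n,0) + a(n,b(n))$, e.g.\ $\fai_{n+1}(c_{n+1}) = e_n\, c_n\, e_n^2\, c_n\, e_n$. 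Two consecutive $(n+1)$-blocks then produce a gap of exactly two $e_n$'s between $c_n$-traversals, identical to the interior gap $e_n^{a(n,1)}$, so a stretch $c_n\, e_n^2\, c_n\, e_n^2\, c_n \dotsb$ admits two parsings offset by one block; the phase is decided only by data outside the stretch. If $D_n$ lies in such a stretch, the common forward tail cannot determine $\nu$: the ambiguity is resolved precisely by symbols at or to the left of $D_n$, where $x$ and $y$ differ --- the very obstruction you identified. Telescoping does not remove this (it only changes which words play the role of $\fai_{n+1}(c_{n+1})$), and expansiveness (\cref{thm:finite-expansive-proximal}, \cref{prop:expansive-from-1}) is a two-sided separation statement; it yields no one-sided determination of cuts at the edge of the common region. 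So at any fixed level your ``granted'' claim can fail, and no contradiction can be extracted at a single level $n$.

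The paper's proof avoids claiming that any cut near the disagreement is common. With $i_0$ the least index of forward agreement of $\barx[1]$ and $\bary[1]$, it fixes an arbitrary $n$ and takes $i_1$ to be the least index such that \emph{all} $n$-cuts of $\barx[n]$ and $\bary[n]$ from $i_1$ onward coincide --- this exists because forward asymptoticity forces the level-$n$ rows, hence the cuts, to agree far to the right. One checks that $i_1$ is itself a common $n$-cut and that the symbols at $i_1 - 1$ differ; a case analysis on which of the two points carries $c_n$ at $i_1-1$, combined with \cref{lem:final-series} (the $e_1$-tails of traversals grow with the level), forces $(\barx[1])(i) = e_1$ for $i_0 \le i < i_0 + n - 1$. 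Since $n$ was arbitrary, $c_1$ never recurs in $\barx[1]$ after $i_0$, i.e.\ $x \in W^s(p)$, the desired contradiction. The decisive difference from your plan is that the paper accumulates its conclusion across all levels (ever longer $e_1$-runs after $i_0$) rather than seeking a one-level contradiction; given the parsing ambiguity above, some such cross-level mechanism is unavoidable. Your symmetric treatment of the backward statement is fine and matches the paper.
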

\begin{proof}
We show the first statement. The last statement follows in the same manner.
Take $x,y \in X \setminus W^s(p)$ such that $x \ne y$.
We show that $\limsup_{i \to +\infty}d(f^i(x),f^i(y)) > 0$.
Suppose that, in contrast, $\limsup_{i \to +\infty}d(f^i(x),f^i(y)) = 0$.
Then, there exists the least $i_0 \bi$ such that $(\barx[1])(i) = (\bary[1])(i)$
 for all $i \ge i_0$.
It follows that $(\barx[1])(i_0-1) \ne (\bary[1])(i_0 -1)$.
Without loss of generality, we assume that 
 $(\barx[1])(i_0-1) = c_1$ and $(\bary[1])(i_0-1) = e_1$.
Take an $n > 1$ arbitrarily.
Then, the least $n$-cut of $\barx[n]$ in the region $[i_0, \infty)$
 is strictly larger than $i_0$.
Obviously, $(\barx[n])(i) = (\bary[n])(i)$ for all sufficiently large $i$.
Therefore, there exists the least $i_1 > i_0$ such that 
 for all $i \ge i_1$, the $n$-cuts are the same.
Obviously, $i_1$ itself is the position of a common $n$-cut of $\barx[n]$
 and $\bary[n]$.
It follows that $(\barx[n])(i_1 -1) \ne (\bary[n])(i_1-1)$.
We get the next two cases:
\enumb
\item\label{i:xcn} $(\barx[n])(i_1 -1) = c_n$ and $(\bary[n])(i_1 -1) = e_n$, and
\item\label{i:ycn} $(\barx[n])(i_1 -1) = e_n$ and $(\bary[n])(i_1 -1) = c_n$.
\enumn
Suppose that \cref{i:xcn} holds.
Take the largest $i_2 \le i_1$ with $(\barx[1])(i_2) = c_1$.
We consider the next two cases:
\enumb
\setcounter{enumi}{2}
\item\label{i:xcn-final} $i_2 = i_0-1$, and
\item\label{i:xcn-not-final} $i_2 \ge i_0$.
\enumn
Suppose that \cref{i:xcn-final} holds.
Then, $(\barx[1])(i) = e_1$ for $i_0 \le i \le i_1$.
Because $i_1$ is the position of an $n$-cut,
 by \cref{lem:final-series}, we get $i_1 - i_0 \ge n-1$.
Thus, we get $(\barx[1])(i) = e_1$ for $i_0 \le i < i_0 + n -1$.
Suppose that \cref{i:xcn-not-final} holds.
Because the largest $i < i_1$ with $(\bary[n])(i) = c_n$
 is strictly less than $i_1-1$,
 the largest $i < i_1$ with $(\bary[1])(i) = c_1$
 is strictly less than $i_2$.
This implies that $(\barx[1])(i_2) \ne (\bary[1])(i_2)$ and $i_2 \ge i_0$,
 which is a contradiction.
Thus, in the case of \cref{i:xcn}, we get $(\barx[1])(i) = e_1$ for
 $i_0 \le i < i_0 + n -1$.
Next, we suppose that \cref{i:ycn} holds.
Because $(\barx[1])(i_0-1) = c_1$ and $(\barx[n])(i_1 -1) = e_n$,
 by \cref{lem:final-series}, we get $i_1 - 1 - (i_0 -1) > n -1$,
 i.e., $i_1 - i_0 \ge n$.
Take the largest $i_3 < i_1-1$ with $(\bary[1])(i_3) = c_1$.
Then, because $i_1$ is a position of a common $n$-cut,
 we get $(\barx[1])(i_3) = e_1$.
Because $(\barx[1])(i) = (\bary[1])(i)$ for all $i \ge i_0$,
 it follows that $i_3 < i_0$.
In particular, $(\bary[1])(i) = e_1$ for all $i_0 \le i \le i_1 -1$.
Because $(\barx[1])(i) = (\bary[1])(i)$ for all $i \ge i_0$,
 we get $(\barx[1])(i) = e_1$ for all $i_0 \le i \le i_1 -1$.
Thus, we get $(\barx[1])(i) = e_1$ for all $i_0 \le i \le i_0 + n -1$.
Thus, in both cases of \cref{i:xcn,i:ycn},
 we get $(\barx[1])(i) = e_1$ for all $i_0 \le i < i_0 + n -1$.
Because $n > 1$ is arbitrary, we can conclude that
 $\lim_{i \to +\infty} f^i(x) = p$, which is a contradiction.
\end{proof}
\begin{lem}\label{lem:fatting}
Let $x \in X$ and $a < b$ be integers.
Then, there exists an $N > 0$ such that for all $n \ge N$,
 the sequence
 $(\barx[1])[a,b]$
 appears in $c_n[1]$ for the $n$-symbol $c_n$.
\end{lem}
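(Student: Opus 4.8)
The plan is to reduce the statement to producing a single large index and then to propagate it upward using the nesting of the words $c_n[1]$. First I would record the elementary observation that $c_n[1]=\fai_{n,1}(c_n)$ is a factor of $c_{n+1}[1]=\fai_{n+1,1}(c_{n+1})$: since $b(n)\ge 1$, the word $\fai_{n+1}(c_{n+1})$ contains at least one copy of $c_n$, and applying $\fai_{n,1}$ sends each $e_n$ to $e_1$ and each $c_n$ to $c_n[1]$, so $c_n[1]$ occurs verbatim inside $c_{n+1}[1]$. Consequently, if the finite word $w:=(\barx[1])[a,b]$ occurs in $c_N[1]$ for a single index $N$, it occurs in $c_n[1]$ for every $n\ge N$. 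Thus it suffices to produce one such $N$.

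Next I would split into two cases according to the content of $w$. If $w$ consists only of the symbol $e_1$, then $w=e_1^{\,b-a+1}$, and I would invoke \cref{lem:final-series}: the leading run of $e_1$'s in $c_n[1]=\fai_{n,1}(c_n)$ has length at least $n-1$, so $w$ appears in $c_n[1]$ as soon as $n-1\ge b-a+1$. This case is immediate.

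For the main case, suppose $w$ contains an occurrence of $c_1$, say $(\barx[1])(i^\ast)=c_1$ with $i^\ast\in[a,b]$. The key structural remark is that, for every level $m\ge 1$, the position $i^\ast$ lies inside an occurrence of the circuit $c_m$ in $\barx[m]$: were it covered by an $e_m$, that symbol would expand under $\fai_{m,1}$ to the single letter $e_1$, forcing $(\barx[1])(i^\ast)=e_1$, a contradiction. Let $[\ell_m,r_m]$ denote the level-$1$ positions spanned by this $c_m$-occurrence, so that $(\barx[1])[\ell_m,r_m]=c_m[1]$ and $\ell_m\le i^\ast\le r_m$. These intervals are nested and increasing in $m$, each $c_m$ sitting inside the expansion of the $c_{m+1}$ covering $i^\ast$; hence $\ell_m$ is nonincreasing and $r_m$ is nondecreasing. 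It then remains to show that $\ell_m\le a$ and $r_m\ge b$ for all large $m$, for then $w$ is a factor of $c_m[1]$ and we may take $N=m$.

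The main obstacle, which is the real content of the lemma, is precisely this last step, and I would argue it by contradiction through the fixed point. Since $\ell_m$ is a nonincreasing integer sequence, if it failed to drop to $\le a$ it would stabilize at some value $\ell_\infty>a$. But $\ell_m$ is, by construction, the initial position of an occurrence of $c_m$, hence an $m$-cut, so $f^{\ell_\infty}(x)\in U(v_{m,0})$ for all large $m$; as the closed-open sets $U(v_{m,0})$ decrease with $\bigcap_m U(v_{m,0})=\seb p \sen$, this gives $f^{\ell_\infty}(x)=p$ and therefore $x=p$, contradicting $(\barx[1])(i^\ast)=c_1$. The inequality $r_m\ge b$ follows symmetrically, using that $r_m+1$ is an $m$-cut. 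Combining the two cases with the propagation step from the first paragraph then yields the desired $N$.
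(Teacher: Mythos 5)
Your proof is correct, and its skeleton is the paper's: find a position $i^{\ast}\in[a,b]$ carrying $c_1$, follow the occurrence of $c_m$ containing $i^{\ast}$ in $\barx[m]$, and show that its span $[\ell_m,r_m]$ eventually engulfs $[a,b]$. Where you differ from the paper is in how that growth is justified. The paper's (one-line) proof rests on the \emph{strict} extension property: since every $\fai_{m+1}(c_{m+1})$ begins and ends with at least one $e_m$ (that is, $a(m,0)>0$ and $a(m,b(m))>0$, coming from the \bidirectionalitys condition and the reduced form), the block containing $i^{\ast}$ extends by at least one position on each side at every level, so $\ell_m\le \ell_1-(m-1)\to-\infty$ and $r_m\ge r_1+(m-1)\to+\infty$, giving an explicit bound on $N$. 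You use only the weak nesting $[\ell_m,r_m]\subseteq[\ell_{m+1},r_{m+1}]$ and rule out stabilization through the fixed point: a stabilized left endpoint $\ell_\infty$ would be an $m$-cut for all large $m$, forcing $f^{\ell_\infty}(x)\in\bigcap_m U(v_{m,0})=\seb p \sen$, hence $x=p$, a contradiction. This is a genuinely different key step --- a compactness/fixed-point argument in place of combinatorial strict growth --- and it is marginally more robust, since it never uses positivity of $a(m,0)$ and $a(m,b(m))$, only the uniqueness of the path of centers; the price is that it yields no effective bound on $N$. Your two bookkeeping additions are also sound and worth keeping: the upward propagation of occurrences ($c_n[1]$ is a factor of $c_{n+1}[1]$ because $b(n)\ge 1$), which the paper gets for free since its intervals grow monotonically past $[a,b]$, and the explicit treatment via \cref{lem:final-series} of the degenerate case $w=e_1^{\,b-a+1}$, which the paper's argument leaves implicit.
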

\begin{proof}
The ranges of the $n$-cuts of $c_n$ are extended, by at least one,
 to the left and to the right,
 as $n$ increases.
Thus, the proof is obvious.
\end{proof}
We recall that $(X,f)$ is a Cantor system.
\begin{lem}\label{lem:single-orbit}
Both $W^s(p) \setminus \seb p \sen$ and $W^u(p) \setminus \seb p \sen$
 consist of single orbits.
\end{lem}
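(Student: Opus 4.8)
The plan is to work entirely inside the two-sided subshift $(\barX_1,\sigma)$, to which $(X,f)$ is topologically conjugate by \cref{prop:expansive-from-1}, and to show that a point's sequence $\barx[1]$ is rigidly determined by a single integer once it lies in $W^s(p)\setminus\seb p\sen$. Throughout I identify $x$ with $\barx[1]\in\barX_1\subseteq\sC_1^{\Z}$ and recall that $p$ corresponds to the constant sequence all of whose entries are $e_1$.

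First I would characterize the stable set combinatorially: $x\in W^s(p)$ if and only if $\barx[1]$ is eventually equal to $e_1$ to the right, i.e.\ there is a $J$ with $(\barx[1])(i)=e_1$ for all $i\ge J$. One direction is immediate by evaluating the convergence $f^i(x)\to p$ on the zeroth coordinate; the converse holds because once $\barx[1]$ is constantly $e_1$ beyond $J$, every finite window around position $i$ agrees with that of $p$ for $i$ large, so $\sigma^i\barx[1]\to\barp[1]$. The symmetric statement characterizes $W^u(p)$ by constancy to the left. I would then record that for $x\ne p$ the symbol $c_n$ occurs in $\barx[n]$ for every $n$: some level exhibits a $c_{n_0}$, this propagates upward (a position carrying $c_n$ cannot lie inside an $e_{n+1}$, whose expansion is a power of $e_n$), and it propagates downward because $\fai_{m,n}(c_m)$ contains $c_n$.

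The heart of the argument is a rigidity statement for the terminal circuits. For $x\in W^s(p)\setminus\seb p\sen$ let $I_n(x)$ be the first position after the rightmost occurrence of $c_n$ in $\barx[n]$, which exists by the previous paragraph together with the eventual constancy. Using the explicit form
\[ \fai_{n+1}(c_{n+1}) = e_n^{a(n,0)}\ \prod_{j = 1}^{b(n)} \left(c_n e_n^{a(n,j)}\right), \qquad a(n,0),\,a(n,b(n))>0, \]
I would show that the rightmost $c_n$ is exactly the final $c_n$ inside the rightmost $c_{n+1}$: that $c_{n+1}$ must be the last one (a later $c_{n+1}$ would force a later $c_n$), and inside it the last $c_n$ is followed only by $e_n^{a(n,b(n))}$. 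This yields the level-independent recursion $I_{n+1}(x)=I_n(x)+a(n,b(n))$, hence $I_n(x)=I_1(x)+\sum_{k=1}^{n-1}a(k,b(k))$, and shows that the terminal circuits are nested and aligned: the block occupied by the rightmost $c_n$ ends at $I_n(x)-1$ and contains, as its own last $c_{n-1}$, the rightmost $c_{n-1}$.

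Finally I would let $n\to\infty$. Writing $D_n:=l(c_n)-\sum_{k<n}a(k,b(k))$, the recursion $l(c_{n+1})=b(n)\,l(c_n)+\sum_{j=0}^{b(n)}a(n,j)$ together with $b(n)\ge 1$ and $a(n,0)\ge 1$ gives $D_{n+1}\ge D_n+1$, hence $D_n\to\infty$. Thus the left endpoint $I_1(x)-D_n$ of the rightmost $c_n$ tends to $-\infty$, so the level-$1$ projections $c_n[1]$, which are fixed words depending only on the covering and are consistently nested, determine $(\barx[1])(i)$ for every $i<I_1(x)$; to the right of $I_1(x)$ the sequence is constantly $e_1$. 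Consequently $\barx[1]$ depends only on the integer $I_1(x)$, and a shift by $f$ changes $I_1$ by a corresponding integer, so any two points of $W^s(p)\setminus\seb p\sen$ lie on a single orbit. The statement for $W^u(p)$ follows by the mirror-image argument, reading leftward and using $a(n,0)>0$ in the initial block $e_n^{a(n,0)}c_n$ of $\fai_{n+1}(c_{n+1})$. The main obstacle is the rigidity step: proving that the terminal circuits are forced to be nested with the $x$-independent offsets $a(n,b(n))$, since this is precisely what collapses the a priori large freedom in the left tail down to the single parameter $I_1(x)$.
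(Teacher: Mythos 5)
Your proposal is correct and takes essentially the same approach as the paper: the paper's proof normalizes (WLOG, by shifting along the orbit) so that the last occurrence of $c_1$ ends at position $0$ and then argues that each inverse-limit coordinate $v_n$ is forced, which is precisely your rigidity statement that the rightmost $c_n$-occurrences at successive levels are nested with the fixed offsets $a(n,b(n))$. Your write-up merely makes explicit what the paper leaves implicit --- the recursion $I_{n+1}(x)=I_n(x)+a(n,b(n))$ and the divergence of $D_n$, which guarantees the nested terminal circuits exhaust the left tail --- so the two arguments coincide in substance.
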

\begin{proof}
We show that $W^s(p) \setminus \seb p \sen$ consists of a single orbit.
Let $x \in W^s(p)$ with $x \ne p$.
Because $X$ is the inverse limit of $\covrepa{G}{\fai}$,
 $x = (v_0,v_1,v_2, \dotsc)$ with $v_n \in G_n$ for all $n \bni$.
Without loss of generality, we assume that
 $(\barx[1])(0) = c_1$ and $(\barx[1])(i) = e_1$ for all $i \ge 1$.
We show that such an $x$ is unique.
Let $n > 1$.
It is obvious that $(\barx[n])(0) = c_n$.
Furthermore, there exists an $i_n > 0$ such that
 $(\barx[n])(i) = c_n$ for all $0 \le i \le i_n$ and 
 $(\barx[n])(i) = e_n$ for all $i > i_n$.
Thus, there exists a unique $j$ ($0 < j < l(c_n)$) such that
 $v_n = c_n(j)$.
Because $n > 1$ is arbitrary, we get a unique $x \in X$.
\end{proof}
\begin{lem}\label{lem:distinct-orbits}
It follows that $W^s(p) \cap W^u(p) = \seb p \sen$.
\end{lem}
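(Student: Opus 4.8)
The plan is to argue by contradiction using the one-row coding. By \cref{prop:expansive-from-1} the map $x \mapsto \barx[1]$ is a topological conjugacy from $(X,f)$ onto the two-symbol subshift $(\barX_1,\sigma)$, carrying $f$ to $\sigma$ and the fixed point $p$ to the constant sequence whose every entry is $e_1$. Consequently $f^i(x) \to p$ is equivalent to $\sigma^i(\barx[1])$ converging to that constant sequence, which for a subshift means exactly that $\barx[1]$ is eventually equal to $e_1$ as $i \to +\infty$; symmetrically, $x \in W^u(p)$ means $\barx[1]$ is eventually $e_1$ as $i \to -\infty$. Hence, assuming $x \in W^s(p) \cap W^u(p)$ with $x \neq p$, the set $S := \seb i \bi \mid (\barx[1])(i) = c_1 \sen$ is finite (agreement with $e_1$ at both ends) and nonempty (otherwise $\barx[1]$ would be constant and, by injectivity of the coding, $x = p$). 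I aim to show this is impossible.

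The main step is to count the letters $c_1$ inside a single high-level circuit and compare with $\hash S$. Fix any $i_0 \in S$. Since $(\barx[1])(i_0) = c_1 \neq e_1$, the level-$1$ edge at $i_0$ is not the central self-loop, so for every $n \bpi$ the level-$n$ edge $\barx(n,i_0)$ cannot be $e_n$ (as $\fai_{n,1}(e_n) = e_1$); because $\sC_n = \seb e_n,c_n \sen$, we must have $\barx(n,i_0) = c_n$. Thus position $i_0$ sits inside a single copy of the circuit $c_n$, occupying an interval $I_n$ of length $l(c_n)$ on which $\barx[1]$ reads precisely the projected word $\fai_{n,1}(c_n)$. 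From the expansion $\fai_{n+1}(c_{n+1}) = e_n^{a(n,0)}\prod_{j=1}^{b(n)}(c_n e_n^{a(n,j)})$ one sees inductively that $\fai_{n,1}(c_n)$ contains exactly $\prod_{k=1}^{n-1}b(k)$ occurrences of $c_1$, since the $e$-blocks contribute none. Every such occurrence is a position of $S$ lying in $I_n$, whence $\prod_{k=1}^{n-1}b(k) \le \hash S$ for all $n$.

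The crux, and the only place the standing hypotheses are essential, is that this product diverges. Recall from the discussion preceding \cref{nota:Cantor} that we work with genuine Cantor systems and therefore exclude the degenerate case in which $b(n) > 1$ for only finitely many $n$; under this exclusion $b(n) \ge 2$ infinitely often, so $\prod_{k=1}^{n-1}b(k) \to \infty$, contradicting the finiteness of $S$. Therefore no point $x \neq p$ can lie in $W^s(p) \cap W^u(p)$, giving $W^s(p) \cap W^u(p) = \seb p \sen$. I expect the conjugacy of \cref{prop:expansive-from-1} to carry the weight of the topological part, after which the argument is entirely combinatorial; the one subtlety to handle carefully is that eventual agreement with $e_1$ in the single row $\barx[1]$ really does encode convergence to $p$ in $X$ (not merely at level $1$), which is precisely what that conjugacy guarantees.
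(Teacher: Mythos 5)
Your proof is correct, and it follows the same overall strategy as the paper: pass to the array/row-one coding, observe that a point of $W^s(p) \cap W^u(p)$ other than $p$ has a finite but nonempty set of positions carrying $c_1$, and contradict this via the hierarchical replication of circuits, which is exactly where the Cantor assumption of \cref{nota:Cantor} (equivalently, $b(n) \ge 2$ infinitely often) enters in both arguments. The difference is in the counting mechanism. The paper takes a finite window $[a,b]$ containing \emph{all} occurrences of $c_1$, invokes \cref{lem:fatting} to place that window inside a single copy of $c_n$, and then produces an ``extra'' $c_1$ from a second copy of $c_n$ inside $\fai_{m,n}(c_m)$ for suitable $m>n$. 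You instead fix a single position $i_0$ with $(\barx[1])(i_0)=c_1$, note that $\barx(n,i_0)=c_n$ for every $n$, and bound $\hash S$ from below by the number $\prod_{k=1}^{n-1}b(k)$ of occurrences of $c_1$ in $\fai_{n,1}(c_n)$, which diverges. Your version is slightly more self-contained: it bypasses \cref{lem:fatting} and the two-level comparison, and it avoids the mildly delicate point (implicit in the paper's proof) that the specific occurrence at positions $[a,b]$ in $\barx$, and not merely the word it carries, must lie inside one copy of $c_n$. The cost is that you must justify the product formula for the number of $c_1$'s in $\fai_{n,1}(c_n)$, which is a routine induction from the expansion $\fai_{n+1}(c_{n+1}) = e_n^{a(n,0)}\prod_{j=1}^{b(n)}\left(c_n e_n^{a(n,j)}\right)$ together with the unique decomposition of closed walks at the center of the GF8 $G_1$ into copies of $e_1$ and $c_1$. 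Both proofs rest on the same facts, so the divergence is one of bookkeeping rather than of substance.
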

\begin{proof}
Let $x \in W^s(p) \cap W^u(p)$ such that $x \ne p$.
Because $x \ne p$, there exist integers $a < b$ such that
 the sequence
 $(\barx[1])[a,b] :=
 ((\barx[1])(a), (\barx[1])(a+1), (\barx[1])(a+2), \dotsc, (\barx[1])(b))$,
 covers all the appearances of $c_1$ in all the $\barx[1]$.
By \cref{lem:fatting}, there exists an $n > 1$ such that
 $(\barx[1])[a,b]$ is a part of $c_n[1]$.
Nevertheless, for some $m > n$, there exist at least two appearances
 of $c_n$ in $\fai_{m,n}(c_m)$.
This implies that there exists an extra $c_1$ in $\barx[1]$,
 which is a contradiction.
\end{proof}
\begin{thm}\label{thm:residually-scrambled}
Let $(X,f)$ be a (homeomorphic) proximal Cantor system with topological
 rank 2.
Let $K : = X \setminus (W^s(p) \cup W^u(p))$.
Then, $K$ is scrambled in both ways.
In particular, $(X,f)$ is residually scrambled.
Furthermore, $(X,f)$ is expansive and isomorphic to $(\barX_1,\sigma)$,
 which is a subshift of two-symbols.
\end{thm}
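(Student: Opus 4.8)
The plan is to assemble results already established rather than to prove anything essentially new. First I would dispose of the final sentence: expansiveness is immediate from \cref{thm:finite-expansive-proximal}, and the isomorphism with $(\barX_1,\sigma)$ is exactly \cref{prop:expansive-from-1}; since $G_1$ carries exactly the two circuits $e_1$ and $c_1$, the subshift $\barX_1$ is over the two-symbol alphabet $\sC_1 = \seb e_1,c_1 \sen$.

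Next I would verify that $K$ is scrambled in both ways. Fix distinct $x,y \in K$, so that $x,y \notin W^s(p)$ and $x,y \notin W^u(p)$. In the forward direction, proximality of $(X,f)$ gives $\liminf_{i\to+\infty}d(f^i(x),f^i(y)) = 0$ at no cost, while \cref{lem:positively-expansive} supplies $\limsup_{i\to+\infty}d(f^i(x),f^i(y)) > 0$ precisely because $x,y \notin W^s(p)$; hence $(x,y)$ is a forward Li--Yorke pair. For the backward direction I would first note that $(X,f^{-1})$ has the same closed invariant sets as $(X,f)$, so $\seb p \sen$ remains its unique minimal set and \cref{thm:proximal-system} shows that $(X,f^{-1})$ is again proximal, yielding backward proximality of every pair. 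Combined with the second assertion of \cref{lem:positively-expansive} (now using $x,y \notin W^u(p)$), this makes $(x,y)$ a backward Li--Yorke pair, so $K$ is scrambled both forwards and backwards.

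Finally, for residual scrambledness I would show that $K$ is a dense $G_\delta$. By \cref{lem:single-orbit} each of $W^s(p)$ and $W^u(p)$ is the union of $\seb p \sen$ with a single orbit, hence countable, so $C := W^s(p) \cup W^u(p)$ is countable. Since $X$ is a Cantor set it is perfect, so every singleton is closed with empty interior, hence nowhere dense; writing $K = \bigcap_{z \in C}(X \setminus \seb z \sen)$ exhibits $K$ as a $G_\delta$ set, and the Baire category theorem makes it dense. A forward scrambled dense $G_\delta$ set is exactly what \cref{defn:residually-scrambled} demands, so $(X,f)$ is residually scrambled.

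With every ingredient in hand there is no single hard step. The only place needing genuine care is the transfer of proximality to the time-reversed system $(X,f^{-1})$: I must confirm that the class of minimal sets is unchanged under reversing time before invoking the Akin--Kolyada characterization. The remaining topological point, that a countable exceptional set has comeager complement in the perfect space $X$, is routine.
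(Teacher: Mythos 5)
Your proof is correct and follows essentially the same route as the paper's: it combines \cref{lem:single-orbit} (to see the exceptional set is countable, hence $K$ is dense $G_\delta$ in the Cantor set), proximality plus \cref{lem:positively-expansive} (to get Li--Yorke pairs in both directions), and \cref{prop:expansive-from-1} (for expansiveness and the two-symbol subshift conjugacy). The only difference is that you spell out the backward-proximality step---transferring proximality to $(X,f^{-1})$ via \cref{thm:proximal-system}, since $f$ and $f^{-1}$ have the same minimal sets---which the paper compresses into a single ``similarly''; this is a welcome clarification, not a change of approach.
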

\begin{proof}
Let $K^s := X \setminus W^s(p)$ and $x,y \in K^s$ with $x \ne y$.
Because $X$ is homeomorphic to the Cantor set, by \cref{lem:single-orbit},
 $K^s$ is a dense $G_{\delta}$ subset.
We show that $K^s$ is positively scrambled.
Because $(X,f)$ is proximal, we get 
 $\liminf_{i \to +\infty}d(f^i(x),f^i(y)) = 0$.
By \cref{lem:positively-expansive}, we get
 $\limsup_{i \to +\infty}d(f^i(x),f^i(y)) > 0$.
Similarly, $K^u := X \setminus W^u(p)$ is dense $G_{\delta}$ and
 negatively scrambled.
Thus, $K = K^s \cap K^u$ is dense $G_{\delta}$ and scrambled in both ways.
By \cref{prop:expansive-from-1},
 $(X,f)$ is expansive and isomorphic to $(\barX_1,\sigma)$, which
 is a subshift of two symbols.
\end{proof}
\begin{rem}\label{rem:transitive}
Every orbit, except the fixed point $p$, of $(X,f)$ is dense.
To see this, let $x \notin W^s(p)$.
Then, it is obvious that for all $n \bpi$, $(\barx[n])[0,\infty)$ contains
 infinite $c_n$.
Thus, $x$ is positively transitive.
Next, let $x \in W^s(p)$ with $x \ne p$.
Then, by \cref{lem:distinct-orbits}, it follows that $x \notin W^u(p)$.
Thus, the same argument in the negative direction
 shows that the orbit of $x$ is dense.
\end{rem}
By \cref{thm:transitive-fixed},
 every transitive proximal system is densely uniformly chaotic.
By the above remark,
 every proximal Cantor system with topological rank 2 is densely uniformly
 chaotic.
\begin{rem}\label{rem:Li-Yorke-sensitive-2}
By \cref{rem:Li-Yorke-sensitive},
 a homeomorphic transitive proximal Cantor system $(X,f)$
 with finite topological rank is Li--Yorke sensitive.
In particular, Cantor homeomorphic proximal systems with topological rank 2
 are Li--Yorke sensitive.
\end{rem}
\begin{rem}
Let $S$ be a scrambled set of $(X,f)$.
Obviously, we get $\num{S \cap W^s(p)} \le 1$.
Because $W^s(p)$ is dense, $S$ does not have a non-empty interior.
\end{rem}
\subsection{Some examples}
In this subsection, we consider examples of proximal Cantor systems
 of rank 2.
We would like to manage the topological (weakly) mixing property together 
 with the number of ergodic measures.
We failed to construct an example that is not topologically mixing, yet
 weakly mixing, and has two ergodic measures.
We restrict our attention to the systems in which sequences $s(n), s'(n), t(n), \text{ and } t'(n)$ of positive integers exist, with
 both $t(n) \ge 2$ and $t'(n) \ge 2$,
 and the representing proximal KR-covering
 $\Gcal : \covrepa{G}{\fai}$ satisfies the following requirements:
\itemb
\item for all $n \ge 1$,
 $\fai_{n+1}(c_{n+1})
 = e_{n}^{s(n)} c_n^{t(n)} a_n c_n^{t'(n)} e_n^{s'(n)}$.
\itemn
Here, $a_n$ is a cycle that consists of $e_n$'s and $c_n$'s.
The number of $e_n$ that appears in $a_n$ is denoted as $s''(n)$, and
 the number of $c_n$ that appears in $a_n$ is denoted as $t''(n)$.
Because every vertex is covered at least twice,
 the resulting zero-dimensional system is a Cantor system.
Because we are considering systems of rank 2,
 we denote $l_n := l(c_n)$ ($n \bpi$) and
 $c_n = (v_{n,0} := v_{n,0}, v_{n,1}, v_{n,2}, \dotsc, v_{n,l_n} := v_{n,0})$.
\begin{nota}
The class of zero-dimensional systems with topological rank 2 having proximal
 KR-covering $\Gcal$ as above is denoted by $\Scal$.
\end{nota}
Because each $\fai_n$ is \bidirectional, $f$ is a homeomorphism.
Let $(X,f) \in \Scal$ and $\Gcal : \covrepa{G}{\fai}$ be its graph covering.
Let $x \in X$ be represented as $x = (v_0,v_1,v_2,\dotsc) \in X$.
For each $n \bpi$, we write $v_n = v_{n,j(n)}$ for some $j(n) \in [0,l_n)$.
There exists a point measure $\mu_p$ on the fixed point.
It is easy to see that $\mu_p = \lim_{n \to \infty}\tile_n$.
Suppose that there exist exactly two ergodic measures.
Let $\mu_c$ be the ergodic measure with $\mu_c \ne \mu_p$.
Then, by \cref{thm:ergodic-measures-are-from-essential-circuits},
 there exists a sequence $(\tilc'_n)_{n \bpi}$
 with $c'_n = e_n$ or $c'_n = c_n$ such that $\lim_n(c'_n) = \mu_c$.
If there is an infinite number of $n \bpi$ such that
 $c'_n = e_n$, then we get $\mu_c = \mu_p$, which is a contradiction.
Therefore, we get $\lim_{n \to \infty} \tilc_n = \mu_c$.
For $m > n$, without the assumption that $\mu_c \ne \mu_p$,
 we compute $\xi_{m,n}(\tilc_m)$.
Let us denote $\bars(n) := s(n) + s'(n) + s''(n)$ and
 $\bart(n) := t(n) +t'(n) + t''(n)$.
Further, let us denote $r(n) := (l_{n+1} - \bars(n))/l_{n+1}
 = (\bart(n)l_n)/(\bars(n)+ \bart(n)l_n)$.
Then, it follows that $1 - r(n) = \bars(n)/(\bars(n) + \bart(n)l_n)$.
By using the notation in \cref{subsec:invariant-measures},
 we can calculate 
 $\xi_{n+1,n}(\tilc_{n+1}) =  (1-r(n)) \tile_n + r(n) \tilc_n$.
Thus, we get
\begin{equation}\label{eqn:ratio}
 \xi_{m,n}(\tilc_m) = (1-r(m,n)) \tile_n + r(m,n) \tilc_n,
 \text{ where } r(m,n) := \prod_{i = n}^{m-1}r(i).
\end{equation}
Therefore, $\lim_{n \to \infty}\tilc_n = \mu_p$ is equivalent to
 $r(m,n) \to 0$ as $m \to \infty$ for all $n \bpi$.
By elementary analysis, this is equivalent to
\begin{equation}\label{eqn:ergodic-basic}
\sum_{i = 1}^{\infty}(1-r(i)) = \infty.
\end{equation}
The argument above still holds for arbitrary proximal Cantor
 systems with topological rank 2.
For general proximal Cantor systems with topological rank 2,
 we denote 
\[
 \fai_{n+1}(c_{n+1}) =
 e_n^{a(n,0)}\ \prod_{j = 1}^{b(n)} \left(c_n e_n^{a(n,j)}\right)
 \text{, for all } n \bpi,
\]
 where $a(n,0) > 0$, $a(n,j) \ge 0 \text{ for each } 0 < j < b(n)$,
 $a(n,b(n)) > 0$, and $b(n) \ge 1$.
In this case, we denote
\[r(n) := \frac{\left(l_{n+1} - \sum_{j = 0}^{b(n)} a(n,j)\right)}{l_{n+1}},\]
for all $n \bpi$.
Thus, we get the following:
\begin{thm}\label{thm:uniquely-ergodic}
A proximal Cantor system $(X,f)$ with topological rank 2
 is uniquely ergodic if and only if
 $\sum_{i = 1}^{\infty}(1-r(i)) = \infty$.
\end{thm}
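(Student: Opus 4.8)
The plan is to derive both directions from the single computation \eqref{eqn:ratio}. Recall that $\mu_p$, the mass at the fixed point, is always an invariant (indeed ergodic) measure, with $p_*(\mu_p) = \lim_n \tile_n$, and that $(X,f)$ carries at most two ergodic measures by \cref{thm:finite-ergodicity}; hence unique ergodicity is equivalent to the absence of a second ergodic measure distinct from $\mu_p$. Since $a(n,0), a(n,b(n)) > 0$ and $b(n) \ge 1$ give $l_{n+1} - \sum_{j} a(n,j) = b(n) l_n \ge 1$ and $\sum_j a(n,j) \ge 2$, each factor obeys $0 < r(i) < 1$, which is what makes the infinite products below converge. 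The whole argument then turns on analysing the one candidate limit $\nu := \lim_n \tilc_n$.

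First I would show that $\nu := \lim_n \tilc_n$ exists in $\Deltainf$. For a fixed $m$, formula \eqref{eqn:ratio} gives $\xi_{n,m}(\tilc_n) = (1-r(n,m))\tile_m + r(n,m)\tilc_m$ with $r(n,m) = \prod_{i=m}^{n-1} r(i)$; because $0 < r(i) < 1$, this product is non-increasing in $n$ and converges to $\rho(m) := \prod_{i=m}^{\infty} r(i)$, so $\lim_n \xi_{n,m}(\tilc_n)$ exists for every $m$ and $\nu$ is a well-defined point of $\Deltainf$. By the isomorphism of \cref{prop:description-of-measures-by-covers}, $\nu$ is the image $p_*(\mu')$ of an invariant probability measure $\mu'$. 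Moreover $\nu = p_*(\mu_p)$ exactly when $\rho(m) = 0$ for all $m$; since the finite products are positive, this holds for all $m$ as soon as it holds for one, and by the elementary equivalence recorded at \eqref{eqn:ergodic-basic} it amounts precisely to $\sum_{i=1}^{\infty}(1-r(i)) = \infty$.

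With this in hand the forward implication is immediate: if $(X,f)$ is uniquely ergodic then $\mu_p$ is its only invariant measure, so the measure $\mu'$ with $p_*(\mu') = \nu$ equals $\mu_p$, whence $\nu = p_*(\mu_p)$ and $\sum_{i}(1-r(i)) = \infty$. For the converse I would argue contrapositively. If $(X,f)$ is not uniquely ergodic, then besides $\mu_p$ there is a second ergodic measure $\mu_c \ne \mu_p$; applying \cref{thm:ergodic-measures-are-from-essential-circuits} to the circuit system $\Ccal_n = \sC_n = \seb e_n, c_n \sen$ produces a sequence $(c'_n)_{n \bpi}$ with $c'_n \in \seb e_n, c_n \sen$ and $p_*(\mu_c) = \lim_n \tilde{c}'_n$. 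Here the delicate point is to rule out $c'_n = e_n$ for infinitely many $n$: along such indices $\xi_{n,m}(\tile_n) = \tile_m$ identically, so the subsequential limit would be $p_*(\mu_p)$ and force $\mu_c = \mu_p$, a contradiction. Hence $c'_n = c_n$ for all large $n$, so $p_*(\mu_c) = \lim_n \tilc_n = \nu$; as $\mu_c \ne \mu_p$ and $p_*$ is injective, $\nu \ne p_*(\mu_p)$, i.e.\ $\sum_{i}(1-r(i)) < \infty$.

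The step demanding the most care is precisely this exclusion of sequences that interleave infinitely many copies of $e_n$ when pinning the second ergodic measure to $\lim_n \tilc_n$; everything else is monotone-convergence bookkeeping for the product $\prod_{i} r(i)$ together with direct appeals to \cref{prop:description-of-measures-by-covers}, \cref{thm:ergodic-measures-are-from-essential-circuits}, and the computation \eqref{eqn:ratio} already in place.
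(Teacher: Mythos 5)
Your proposal is correct and takes essentially the same route as the paper's own argument: bounding the number of ergodic measures by two via \cref{thm:finite-ergodicity}, pinning any second ergodic measure to $\lim_{n}\tilc_n$ via \cref{thm:ergodic-measures-are-from-essential-circuits} (including the same exclusion of sequences with infinitely many $e_n$'s), and then reading off the dichotomy from the product formula \cref{eqn:ratio} and the equivalence \cref{eqn:ergodic-basic}. The only difference is one of explicitness: you spell out the existence of the limit $\nu=\lim_n\tilc_n$ in $\Deltainf$ and the appeal to the isomorphism $p_*$ of \cref{prop:description-of-measures-by-covers}, steps the paper leaves implicit.
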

Now, we return to the restricted systems $\Scal$.
We denote the subclass of $\Scal$ that consists of uniquely ergodic systems
 as $\Scalue$ and denote $\Scalna = \Scal \setminus \Scalue$.
The fact that every system in $\Scalna$ has a non-atomic measure is shown in 
 \cref{lem:K-has-measure-one}.
\begin{lem}\label{lem:exist-uniquely-ergodic-measures}
Both $\Scalue$ and $\Scalna$ are not empty.
\end{lem}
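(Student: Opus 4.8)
The plan is to reduce the statement to \cref{thm:uniquely-ergodic} and then write down one explicit system in each subclass. Recall that for a system in $\Scal$ one has $l_{n+1} = \bars(n) + \bart(n)l_n$, so that $1 - r(n) = \bars(n)/(\bars(n)+\bart(n)l_n) = \bars(n)/l_{n+1}$. By \cref{thm:uniquely-ergodic}, a system in $\Scal$ lies in $\Scalue$ exactly when $\sum_{i \ge 1}(1-r(i)) = \infty$ and in $\Scalna$ exactly when this series converges. Hence it suffices to exhibit two admissible parameter choices, one making the series diverge and one making it converge, each yielding a genuine Cantor system of rank $2$ (which is automatic once every $c_n$ is covered at least twice, i.e. whenever $t(n),t'(n)\ge 2$).

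For $\Scalna$ I would take all parameters constant: $s(n)=s'(n)=1$, $t(n)=t'(n)=2$, and $a_n=e_n$ (so $s''(n)=1$ and $t''(n)=0$), giving $\bars(n)=3$ and $\bart(n)=4$ for every $n$. Then $l_{n+1}=3+4l_n\ge 4l_n$, so $l_n$ grows at least geometrically, and $1-r(n)=3/(3+4l_n)\le 3/(4l_n)$ is summable. By \cref{thm:uniquely-ergodic} the system is not uniquely ergodic, hence lies in $\Scalna$. This direction requires no real work beyond the geometric growth estimate.

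For $\Scalue$ I would instead force $\bars(n)$ to stay comparable to $\bart(n)l_n$ by letting $s(n)$ grow with $l_n$. Concretely, define the parameters recursively: keep $s'(n)=1$, $t(n)=t'(n)=2$, $a_n=e_n$, but set $s(n)=l_n$, where $l_n$ is the length produced by the previous stages (with $l_1 = l(c_1) = 2$). Then $\bars(n)=l_n+2$, $\bart(n)=4$, and $l_{n+1}=5l_n+2$, so $1-r(n)=(l_n+2)/(5l_n+2)\to 1/5$ stays bounded below by a positive constant; hence $\sum_i (1-r(i))=\infty$ and, by \cref{thm:uniquely-ergodic}, the system is uniquely ergodic and lies in $\Scalue$.

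The step I expect to need the most care is confirming that these two prescriptions genuinely define members of $\Scal$. For the $\Scalue$ example this means checking that the recursive definition of $s(n)$ (which depends on the already-constructed $l_n$) is well posed and that the structural constraints $t(n),t'(n)\ge 2$ and $b(n)>1$ hold at every level, so that the inverse limit is a Cantor system rather than the degenerate single-orbit case excluded in \cref{nota:Cantor}; both hold here since each $c_n$ appears $t(n)+t'(n)=4$ times in $\fai_{n+1}(c_{n+1})$. Once this bookkeeping is in place, the two elementary computations of $\sum_i(1-r(i))$ complete the proof.
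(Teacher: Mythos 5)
Your proof is correct and takes essentially the same route as the paper: the paper's own proof also reduces the lemma to \cref{thm:uniquely-ergodic} and simply observes that choosing $\bars(n)$ very large (resp.\ small) relative to $\bart(n)l_n$ forces divergence (resp.\ convergence) of $\sum_{i}(1-r(i))$. Your explicit recursive parameter choices and the check that they stay inside $\Scal$ merely flesh out the bookkeeping that the paper dismisses as obvious.
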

\begin{proof}
The proof is obvious from \cref{thm:uniquely-ergodic}:
 if we want to construct the systems in $\Scalue$,
 we only need to set $\bars(n)$ $(n \ge 1)$ to be very large compared
 with $\bart(n)$.
For the same reason,
 it is obvious that $\Scalna$ is not empty.
\end{proof}
\begin{lem}\label{lem:K-has-measure-one}
Suppose that $\mu_c \ne \mu_p$.
It follows that $\mu_c(K) = 1$ and $\mu_c(U) > 0$ for all opene $U$.
\end{lem}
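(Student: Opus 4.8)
The plan is to read off the masses $\mu_c(U(v))$ of all basic clopen sets from the circuit expansion of $\mu_c$, and then to use that $X \setminus K$ is a countable union of orbits. Throughout I use that $\mu_c = \lim_{n \to \infty}\tilc_n$ and that, under the isomorphism $p_*$ of \cref{prop:description-of-measures-by-covers}, $\mu_c$ corresponds to the sequence whose $n$-th coordinate is $\mu_{c,n} := \sum_{e \in E(c_n)\cup\seb e_n\sen}\mu_c(U(e))\,e$. First I would establish the key formula: by \eqref{eqn:ratio} we have $\xi_{m,n}(\tilc_m) = (1-r(m,n))\tile_n + r(m,n)\tilc_n$, and letting $m \to \infty$ gives $\mu_{c,n} = (1-R_n)\tile_n + R_n\tilc_n$, where $R_n := \prod_{i \ge n}r(i) = \lim_{m \to \infty}r(m,n)$. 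The hypothesis $\mu_c \ne \mu_p$ forces $R_n > 0$ for all $n$: if $R_n = 0$ for some $n$, then $R_n = 0$ for all $n$ (since $R_n = (\prod_{i=n}^{n'-1}r(i))R_{n'}$ with each $r(i) \in (0,1)$), whence $\mu_{c,n} = \tile_n$ for every $n$, which is the coordinate sequence of $\mu_p$, a contradiction. (Equivalently, the existence of a second ergodic measure means $(X,f)$ is not uniquely ergodic, so $\sum_i(1-r(i)) < \infty$ by \cref{thm:uniquely-ergodic}, and $\prod_i r(i)$ converges to a positive number.)

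Next, for $\mu_c(U) > 0$ on every opene $U$: since $\tile_n = e_n$ (as $l(e_n)=1$) and $\tilc_n = l_n^{-1}\sum_{e \in E(c_n)}e$, the formula yields $\mu_c(U(e_n)) = 1-R_n$ and $\mu_c(U(e)) = R_n/l_n$ for each edge $e$ of the circuit $c_n$. By \cref{lem:inv-on-vertex}, $\mu_c(U(v))$ is the sum of $\mu_c(U(e))$ over the outgoing edges at $v$. Each noncentral vertex $v_{n,j}$ $(1 \le j < l_n)$ has a single outgoing edge on $c_n$, so $\mu_c(U(v_{n,j})) = R_n/l_n > 0$, while the center has the two outgoing edges $e_n$ and the first edge of $c_n$, giving $\mu_c(U(v_{n,0})) = (1-R_n)+R_n/l_n > 0$. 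Thus $\mu_c(U(v)) > 0$ for every vertex $v$; since $\sO_V = \seb U(v)\sen$ is an open basis, any opene $U$ contains some $U(v)$, and hence $\mu_c(U) \ge \mu_c(U(v)) > 0$.

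Finally, for $\mu_c(K) = 1$: by \cref{lem:single-orbit,lem:distinct-orbits}, $X \setminus K = W^s(p) \cup W^u(p) = \seb p \sen \cup O_s \cup O_u$, where $O_s, O_u$ are single orbits, each an infinite set of distinct points (a repetition would produce a periodic point other than $p$, impossible by \cref{rem:transitive}). Invariance of $\mu_c$ assigns the same mass to every point of an infinite orbit, and finiteness of $\mu_c$ forces that mass to be $0$, so $\mu_c(O_s) = \mu_c(O_u) = 0$. For the fixed point, distinct ergodic measures are mutually singular and $\mu_p = \delta_p$, so $\mu_c(\seb p \sen) = 0$; alternatively, $\mu_c(\seb p \sen) = \lim_n\mu_c(U(v_{n,0})) = \lim_n[(1-R_n)+R_n/l_n] = 0$, using $R_n \to 1$ and $l_n \to \infty$. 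Hence $\mu_c(X \setminus K) = 0$ and $\mu_c(K) = 1$.

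The crux is the identity $\mu_{c,n} = (1-R_n)\tile_n + R_n\tilc_n$ together with $R_n > 0$; once the vertex masses are in hand, both conclusions are immediate bookkeeping. The only genuinely delicate point is extracting $R_n > 0$ from $\mu_c \ne \mu_p$ (recognizing non-unique ergodicity as the convergent case $\sum_i(1-r(i)) < \infty$ behind \cref{thm:uniquely-ergodic}), and the matching check that $\mu_c$ nonetheless annihilates the fixed point.
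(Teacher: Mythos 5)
Your proposal is correct and follows essentially the same route as the paper's proof: both express $\mu_{c,n}$ as a convex combination $a_n\tile_n + b_n\tilc_n$, deduce $b_n>0$ (your $R_n>0$) from $\mu_c \ne \mu_p$ to get positive mass on every $U(v)$, and obtain $\mu_c(K)=1$ by noting that $X \setminus K$ is the fixed point together with two single (infinite) orbits, each of measure zero. Your version merely makes the coefficients explicit ($R_n = \prod_{i\ge n} r(i)$) and spells out details the paper leaves terse.
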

\begin{proof}
Because $\tilc_n$ attaches a weight $1/l_n$ to $v_{n,0}$, it follows that $\mu_c(\seb p \sen) = 0$.
Because $W^s(p)$ consists of $\seb p \sen$ and another orbit,
 we get $\mu_c(W^s(p))= 0$.
Similarly, we conclude that $\mu_c(W^u(p))=0$.
Therefore, we get $\mu_c(K) = 1$.
Let $U$ be opene.
Then, there exist $n$ and $v \in V(G_n)$ such that $U(v) \subset U$.
As in \cref{nota:measure-inverse-limit-of-circuits},
 we can obtain an expression $\bar{\mu_c} = (\mu_{c,0},\mu_{c,1},\mu_{c,2},\dotsc)$,
 where $\mu_{c,n} = a_n \tile_n + b_n \tilc_n$
 for some $a_n, b_n \ge 0$ with $a_n +b_n =1$.
Because $\mu_c \ne \mu_p$, for all $n \bpi$, we get $b_n > 0$.
Therefore, we get $\mu_c(U(v)) > 0$ for all $v \in V(c_n)$ and $n \bpi$,
 which concludes the proof.
\end{proof}
Next, we analyze the mixing properties.
It is easy to see that the number of occurrences of $c_n$ in $\fai_{m,n}(c_m)$
 tends to infinity as $m \to \infty$.
\begin{nota}
For $m > n$, we denote $c_{m,n} := \fai_{m,n}(c_m)$ and $c_{n,n} := c_n$.
\end{nota}
We calculate the following:
\begin{equation}\label{eqn:descend2-original}
\begin{split}
 c_{n+2,n} & =
  \fai_{n+1}\left(e_{n+1}^{s(n+1)}\ c_{n+1}^{t(n+1)}\ a_{n+1}\ 
  c_{n+1}^{t'(n+1)}\ e_{n+1}^{s'(n+1)}\right)\\
  & = e_n^{s(n+1)}\ \left(\fai_{n+1}(c_{n+1})\right)^{t(n+1)}\ 
  \fai_{n+1}(a_{n+1})\  \left(\fai_{n+1}(c_{n+1})\right)^{t'(n+1)}\ 
  e_n^{s'(n+1)}\\
  & = e_n^{s(n+1)}\
  \left(e_n^{s(n)}\ c_n^{t(n)}\ a_n\ c_n^{t'(n)}\ e_n^{s'(n)}\right)^{t(n+1)}\\
  & \qquad \qquad \fai_{n+1}(a_{n+1})\ \left(e_n^{s(n)}\ c_n^{t(n)}
  \ a_n\ c_n^{t'(n)}\ e_n^{s'(n)}\right)^{t'(n+1)}\ 
  \ e_n^{s'(n+1)}.
\end{split}
\end{equation}
Therefore, we get
\begin{multline}\label{eqn:descend2}
 c_{n+2,n}
 = e_n^{s(n+1) + s(n)}\ 
 \underset{
   c_n^{t(n)} \text{ appears at first; and } c_n^{t'(n)}, \text{ in the end}
 }{
 \underbrace
 {
 c_n^{t(n)}\ a_n\ c_n^{t'(n)}\ e_n^{s(n)+s'(n)}\ c_n^{t(n)}\ a_n\ \dotsb c_n^{t'(n)}
 }
}
 \ e_n^{s'(n+1)+s'(n)}.
\end{multline}
Let us denote
\begin{equation}\label{eqn:descend2-by-d}
 c_{n+2,n} = e_n^{s(n+1)+s(n)}\ d_{n+2,n}\ e_n^{s'(n+1)+s'(n)}.
\end{equation}
Then, by \cref{eqn:descend2-original}, we get
\begin{equation}\label{eqn:descend2-by-d2}
\begin{split}
d_{n+2,n}
 & := \left(c_n^{t(n)}\ a_n\ c_n^{t'(n)}\ e_n^{s(n)+s'(n)}\right)^{t(n+1)-1}
 \ c_n^{t(n)}\ a_n\ c_n^{t'(n)}\\
 & \qquad \quad \quad e_n^{s'(n)}\ \fai_{n+1}(a_{n+1})\ e_n^{s(n)}\\ 
 & \qquad \quad \quad \quad 
 \left(c_n^{t(n)}\ a_n\ c_n^{t'(n)}\ e_n^{s(n)+s'(n)}\right)^{t'(n+1)-1}
 c_n^{s(n)}\ a_n\ c_n^{t'(n)}
\end{split}
\end{equation}
\begin{nota}\label{nota:sum}
For $m \ge n$, we denote $s(m,n) := \sum_{i = n}^ms(i)$,
 $s'(m,n) := \sum_{i = n}^ms'(i)$,
 and $\tau(m,n) := s(m,n) + s'(m,n)$.
We also denote $s(n-1,n) := 0,\ s'(n-1,n) := 0,\ \myand\ \tau(n-1,n) := 0$.
\end{nota}
\begin{nota}\label{nota:small-dmn}
Further, we denote $d_{n,n} := c_n$ and $d_{n+1,n} := c_n^{t(n)}\ a_n\ c_n^{t'(n)}$.
\end{nota}
Then, we can consider
\begin{equation}
 d_{n+1,n}  := \left(d_{n,n}\ e_n^{\tau(n-1,n)}\right)^{t(n)-1}\ d_{n,n}
 \ e_n^{s'(n-1,n)} a_n\ e_n^{s(n-1,n)}
 \ \left(d_{n,n}\ e_n^{\tau(n-1,n)}\right)^{t'(n)-1}\ d_{n,n}
\end{equation}
We denote $a_{n+1,n} := e_n^{s'(n)}\ \fai_{n+1}(a_{n+1})\ e_n^{s(n)}$.
Then, \cref{eqn:descend2-by-d2} can be rewritten as
\[d_{n+2,n} = \left(d_{n+1,n}\ e_n^{\tau(n,n)}\right)^{t(n+1)-1}\ d_{n+1,n}
 \ \ a_{n+1,n}\ \ 
 \left(d_{n+1,n}\ e_n^{\tau(n,n)}\right)^{t'(n+1)-1}\ d_{n+1,n}.
\]
We continue the calculation as follows:
\begin{align*}
 c_{n+3,n} & = \fai_{n+2,n}
 \left(e_{n+2}^{s(n+2)}\ c_{n+2}^{t(n+2)}\ a_{n+2}\ 
  c_{n+2}^{t'(n)}\ e_{n+2}^{s'(n+2)}\right)\\
 &= e_n^{s(n+2)}\ c_{n+2,n}^{t(n+2)}\ \fai_{n+2,n}(a_{n+2})\ c_{n+2,n}^{t'(n+2)} \ e_n^{s'(n+2)}\\
 &= e_{n}^{s(n+2)}\
 \left(e_n^{s(n+1)+s(n)}\ d_{n+2,n}\ e_n^{s'(n+1)+s'(n)}\right)^{t(n+2)}
 \\
 & \qquad \ \fai_{n+2,n}(a_{n+2})
 \left(e_n^{s(n+1)+s(n)}\ d_{n+2,n}\ e_n^{s'(n+1)+s'(n)}\right)^{t'(n+2)}
 \ e_{n}^{s'(n+2)}
\end{align*}
It follows that
\begin{align*}
 c_{n+3,n} & = e_n^{s(n+2,n)}\ 
 \left(d_{n+2,n}\ e_n^{\tau(n+1,n)} \right)^{t(n+2)-1}\ d_{n+2,n}\\
 & \qquad \ e_n^{s'(n+1,n)}\ \fai_{n+2,n}(a_{n+2})\ 
 e_n^{s(n+1,n)}\\
 & \qquad \left(d_{n+2,n}\ e_n^{\tau(n+1,n)} \right)^{t'(n+2)-1}
 \ d_{n+2,n} \ e_n^{s'(n+2,n)}.
\end{align*}
We define $d_{n+3,n}$ by 
\[\fai_{n+3,n} = e_n^{s(n+2,n)}\ d_{n+3,n}\ e_n^{s'(n+2,n)}.\]
Then, $d_{n+3,n}$ is expressed as
\[ \left(d_{n+2,n}\ e_n^{\tau(n+1,n)} \right)^{t(n+2)-1}\ d_{n+2,n}
 \ a_{n+2,n}\ \left(d_{n+2,n}\ e_n^{\tau(n+1,n)} \right)^{t'(n+2)-1}
 \ d_{n+2,n},
\]
where $a_{n+2,n} = e_n^{s'(n+1,n)}\ \fai_{n+2,n}(a_{n+2})\ 
 e_n^{s(n+1,n)}$.
In general, $d_{m,n}$ ($m \ge n+1$) is defined by 
\begin{equation}\label{eqn:basic}
c_{m,n} = e_n^{s(m-1,n)}\ d_{m,n}\ e_n^{s'(m-1,n)}.
\end{equation}
Then,
\begin{equation}\label{eqn:defndmn}
\begin{split}
 d_{m,n} & = \left(d_{m-1,n}\ e_n^{\tau(m-2,n)}\right)^{t(m-1)-1}\ d_{m-1,n}
 \ \ \ a_{m-1,n}\\
 & \qquad \ \left(d_{m-1,n}\ e_n^{\tau(m-2,n)}\right)^{t'(m-1)-1}\ d_{m-1,n},
\end{split}
\end{equation}
 where,
 $a_{m-1,n} = e_n^{s'(m-2,n)}\ \fai_{m-1,n}(a_{m-1})\ e_n^{s(m-2,n)}$.
\begin{rem}\label{rem:a=0}
Suppose that $a_n$ is blank for all $n \bpi$,
 i.e., $c_{n+1,n} = e_n^{s(n)}\ c_n^{t(n) + t'(n)}\ e_n^{s'(n)}$ for all
 $n \bpi$.
In this case, we get
 $d_{m,n} = \left(d_{m-1,n}\ e_n^{\tau(m-2,n)} \right)^{t(m-1)+t'(m-1)-1}\ 
 d_{m-1,n}$.
\end{rem}

Note that, because $s(n,n) < s(n+1,n) < s(n+2,n) < \dotsb$ and
 $s'(n,n) < s'(n+1,n) < s'(n+2,n) < \dotsb$,
 it follows that $\tau(n,n) < \tau(n+1,n) < \tau(n+2,n) < \dotsb$.
For $m > n$,
 a walk $w$ of $G_m$, and $0 \le i < l(w)$, we denote the following:
\itemb
\item $\barw(n,i) = e_n$,
 when $\fai_{m,n}(w(i),w(i+1))$ is $(v_{n,0},v_{n,0})$,
\item $\barw(n,i) = c_n$, otherwise.
\itemn
We denote
 $\barw[n] = (\barw(n,0),\barw(n,1),\barw(n,2),\dotsc,\barw(n,l(w)-1))$.
It is obvious that $\barx(n,i) = c_n$ implies that $\barx(n+1,i) = c_{n+1}$.
\begin{lem}\label{lem:gaps}
Let $m > n > 0$.
In the expression $d_{m,n}$, there exists a walk in the form
 $c_n\ e_n^{\tau(k,n)}\ c_n$ with $n \le k\le m-2$.
There exists a walk in the form $c_n c_n$.
\end{lem}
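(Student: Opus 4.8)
The plan is to reduce both assertions to the single structural fact that, for every $m \ge n$, the word $d_{m,n}$---read as a concatenation over the alphabet $\sC_n = \seb e_n, c_n \sen$---\emph{both begins and ends with the symbol $c_n$}. First I would prove this by induction on $m$, using \eqref{eqn:defndmn}. The base case is \cref{nota:small-dmn}: $d_{n,n} = c_n$, and $d_{n+1,n} = c_n^{t(n)}\, a_n\, c_n^{t'(n)}$ starts and ends with $c_n$ because $t(n), t'(n) \ge 2$. For the inductive step, since $t(m-1) \ge 2$ gives $t(m-1)-1 \ge 1$, the right-hand side of \eqref{eqn:defndmn} begins with a copy of $d_{m-1,n}$ and ends with a copy of $d_{m-1,n}$; by the inductive hypothesis each of these begins, respectively ends, with $c_n$, so the same holds for $d_{m,n}$.

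It is worth noting at the outset that the two assertions fit into one family: since $\tau(n-1,n) = 0$ by \cref{nota:sum}, the block $c_n c_n$ is exactly the ``$k = n-1$'' instance of $c_n\, e_n^{\tau(k,n)}\, c_n$, so the lemma is really the statement that such a pattern occurs for some $k$ in the range $n-1 \le k \le m-2$. The second assertion (the case $k = n-1$) is then immediate: reading \eqref{eqn:defndmn} shows that $d_{m,n}$ begins with $d_{m-1,n}$, hence, iterating, with $d_{n+1,n} = c_n^{t(n)}\, a_n\, c_n^{t'(n)}$; as $t(n) \ge 2$ the leading block $c_n^{t(n)}$ already contains the walk $c_n c_n$.

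For the first assertion I would take $k = m-2$, which lies in the stated range precisely when $m \ge n+2$ (for $m = n+1$ the range is empty and only the $c_nc_n$ claim applies). Again because $t(m-1)-1 \ge 1$, the prefix $\left(d_{m-1,n}\, e_n^{\tau(m-2,n)}\right)^{t(m-1)-1} d_{m-1,n}$ of \eqref{eqn:defndmn} contains the consecutive block $d_{m-1,n}\, e_n^{\tau(m-2,n)}\, d_{m-1,n}$. By the structural fact the first $d_{m-1,n}$ ends with $c_n$ and the second begins with $c_n$, so this block exhibits the sub-walk $c_n\, e_n^{\tau(m-2,n)}\, c_n$, which is of the required form with $k = m-2$. (The same induction applied to $d_{m-1,n}$ in fact yields the pattern for every $k$ with $n \le k \le m-2$, but only one value is needed.)

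The whole argument is combinatorial bookkeeping and I anticipate no real obstacle. The one point that genuinely must be watched is the flanking of $d_{m-1,n}$ by $c_n$ on both sides: without it, the separating run $e_n^{\tau(m-2,n)}$ could merge with adjacent $e_n$'s and the gap between two consecutive $c_n$'s would no longer be exactly $\tau(m-2,n)$. Keeping the ``begins and ends with $c_n$'' invariant at the center of the induction is what makes the gap read off cleanly.
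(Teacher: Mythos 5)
Your proof is correct and takes essentially the same route as the paper: the paper's entire proof is the one-line remark that the claim is clear from \cref{eqn:defndmn} together with the fact that $t(n) \ge 2$ (or $t'(n) \ge 2$), which is exactly the combination you exploit. Your explicit induction showing that $d_{m,n}$ begins and ends with $c_n$ (and your note that the range $n \le k \le m-2$ is empty when $m = n+1$, so only the $c_n c_n$ claim applies there) merely fills in the details the paper leaves implicit.
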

\begin{proof}
The proof is clear from \cref{eqn:defndmn} and the 
 fact that $t(n) \ge 2$ (or $t'(n) \ge 2$) for all $n \ge 2$.
\end{proof}
\begin{nota}
For $n > 0$ and vertices $u,v \in V(c_n)$,
 there exist unique $0 \le i,j < l(c_n)$ such that $u = v_{n,i}, v = v_{n,j}$.
We denote $\gap(u,v) := j - i$.
For each $x \in X$ and $n > 0$, there exists a unique $0 \le j < l(c_n)$
 such that $x(n,0) = c_n(j)$.
This $j$ is denoted as $j(x,n)$.
Thus, $x(n,0) = c_n(j(x,n)) = v_{n,j(x,n)}$.
Once $j(x,n) \ne 0$, then $j(x,m) \ne 0$ for all $m \ge n$.
Furthermore, because $s(n),s'(n)$ $(n \bpi)$ are positive, it follows that
 $\lim_{m \to \infty}j(x,m) = +\infty$
 and $\lim_{m \to \infty}(l(c_m) -j(x,m)) = +\infty$.
For each $(x,y) \in \left(X\setminus \seb p \sen\right)^2$ with $x \ne y$,
 we denote $\gap_n(x,y) := \gap(x(n,0),y(n,0))$ only for sufficiently large $n$.
Further, $\gap_n(x,y)$ is strictly positive for infinitely many $n$ and/or strictly negative
 for infinitely many $n$.
\end{nota}
The next lemma is evident:
\begin{lem}\label{lem:odd}
Let $n > 0$ be an arbitrary integer.
For arbitrary sequences $s(k), s'(k), t(k), t'(k)$ ($k \ge n$),
 we can adjust the length of $a_k$ ($k \ge n$)
 such that for all $k > n$, $l_k$'s are odd integers.
\end{lem}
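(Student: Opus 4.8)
The plan is to compute how $l_{n+1}$ depends on the data of the covering and then control its parity by a single, harmless modification of $a_k$. Since $e_k$ is the period-$1$ loop at the centre (length $1$) and each $c_k$ has length $l_k$, counting the symbols in
\[
 \fai_{n+1}(c_{n+1}) = e_n^{s(n)}\, c_n^{t(n)}\, a_n\, c_n^{t'(n)}\, e_n^{s'(n)}
\]
yields the recursion $l_{n+1} = \bars(n) + \bart(n)\, l_n$, where $\bars(n) = s(n)+s'(n)+s''(n)$ and $\bart(n) = t(n)+t'(n)+t''(n)$. (This is consistent with the definition of $r(n)$, for which $\bars(n)+\bart(n)l_n$ was already identified with $l_{n+1}$.) Adjusting the cycle $a_k$ changes only $s''(k)$ and $t''(k)$, hence only $\bars(k)$ and $\bart(k)$; the prescribed sequences $s(k),s'(k),t(k),t'(k)$, and in particular the requirements $t(k),t'(k)\ge 2$, are left untouched.

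I would then build the desired covering inductively on $k \ge n$. At stage $k$ the length $l_k$ is already fixed: it is the given value when $k=n$, and the odd value produced at the previous stage when $k>n$. I fix $t''(k)=0$, so that $\bart(k)=t(k)+t'(k)$ and the product $\bart(k)\,l_k$ are determined, and I use the number $s''(k)$ of loops $e_k$ inserted into $a_k$ as the single free parameter. Reducing the recursion modulo $2$ gives
\[
 l_{k+1} \equiv s(k) + s'(k) + s''(k) + \bart(k)\, l_k \pmod 2,
\]
so choosing $s''(k)\in\seb 0,1\sen$ with the parity that makes the right-hand side odd forces $l_{k+1}$ to be odd. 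Carrying this out for every $k\ge n$ makes $l_k$ odd for all $k>n$, as required.

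Finally I would check that this stays inside the admissible class: inserting an extra loop $e_k$ at the centre keeps $a_k$ a legitimate cycle from $v_{k,0}$ to itself (with $a_k$ blank when $s''(k)=0$, which is permitted), leaves $s(k),s'(k)>0$ and $t(k),t'(k)\ge2$ in place, and preserves \bidirectionality, proximality, and rank $2$, so the inverse limit remains a Cantor proximal system with topological rank $2$. There is essentially no real obstacle here; the only point requiring any care is to isolate a single degree of freedom in $a_k$ (the count $s''(k)$, with $t''(k)$ held at $0$) so that the parity of $l_{k+1}$ can be toggled by a unit change without disturbing $\bart(k)$ — once that isolation is made, the whole argument collapses to the one-line congruence above.
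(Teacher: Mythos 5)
Your proof is correct: the recursion $l_{k+1}=\bars(k)+\bart(k)\,l_k$ is exactly the identity already implicit in the paper's definition of $r(k)$, and toggling the parity of $l_{k+1}$ by choosing $s''(k)\in\seb 0,1\sen$ with $t''(k)=0$ is a legitimate adjustment of $a_k$ that keeps the covering inside $\Scal$ (blank $a_k$ is allowed, as Remark~\ref{rem:a=0} confirms) and preserves bidirectionality, proximality, and rank $2$. The paper omits the proof of this lemma as obvious, and your inductive parity argument is precisely the routine argument it had in mind, so there is nothing to add or correct.
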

\begin{proof}
We omit the proof.
\end{proof}
Let $n \bpi$, $u,v \in V(c_n)$.
Let $m > n$.
We recall that for a walk $w = (u_0,u_1,\dotsc,u_l)$
 and $0 \le a < b \le l$,
 it follows that $w[a,b] = (u_a,u_{a+1},\dotsc,u_b)$.
We denote
\[N_{m,n}(u,v) := \seb l \mid \exists a \ge 0,
\ \ (\fai_{m,n}(c_m))[a,a+l] = (u, \dotsc, v) \sen.\]
\begin{lem}\label{lem:one-and-odd}
Let $m > n \bpi$.
Suppose that for all $m > k \ge n$, $l_k$'s are odd integers,
 and $s(k) = s'(k) = 1$ for all $m > k \ge n$.
We also assume that $2(m-n) > 3l_n$.
Then, for all $u,v \in V(c_n)$, $[3l_n,2(m-n)] \subset N_{m,n}(u,v)$.
\end{lem}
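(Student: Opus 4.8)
The plan is to translate the conclusion $[3l_n,2(m-n)]\subset N_{m,n}(u,v)$ into a statement about the distances between occurrences of the circuit $c_n$ inside the walk $c_{m,n}=\fai_{m,n}(c_m)$, and then, for each prescribed length, to exhibit two such occurrences at exactly the right distance. First I would set up this reduction. Write $u=v_{n,i}$ and $v=v_{n,j}$ with $0\le i,j<l_n$. Each occurrence of $c_n$ in $c_{m,n}$ starts at the centre $v_{n,0}$ and then visits $v_{n,1},v_{n,2},\dotsc$ in order, so the positions of $c_{m,n}$ carrying the vertex $v_{n,i}$ are exactly those of the form (start of an occurrence of $c_n$)$+i$, and likewise for $v_{n,j}$. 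Hence, to prove $l\in N_{m,n}(u,v)$ it is enough to find two occurrences of $c_n$ in $c_{m,n}$, beginning at positions $\alpha\le\beta$, with $(\beta+j)-(\alpha+i)=l$, that is with $\beta-\alpha=l+i-j=:D$. Since $l\ge 3l_n$ and $\abs{i-j}<l_n$ we have $D\ge 2l_n+1>0$, so the two occurrences are automatically distinct. As $l$ runs through $[3l_n,2(m-n)]$ and $i,j$ through $[0,l_n)$, the value $D$ ranges over all integers of $[2l_n+1,\,2(m-n)+l_n-1]$; thus it suffices to prove that every integer of this interval occurs as a distance $\beta-\alpha$ between the starting positions of two occurrences of $c_n$ in $c_{m,n}$ (all of which I will locate inside $d_{m,n}$).

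The engine is a parity argument. Any such distance has the form $P\,l_n+Q$, where $P\ge 1$ counts the occurrences of $c_n$ between the two starts and $Q$ is the number of intervening $e_n$'s. Because both endpoints are circuit boundaries, $Q$ is a sum of complete gaps; under the hypothesis $s(k)=s'(k)=1$ each such gap has even length $\tau(k,n)=2(k-n+1)$ by \cref{nota:sum}, so $Q$ is even. As $l_n$ is odd, the parity of $D=P\,l_n+Q$ equals that of $P$. I would therefore treat the two parities separately: odd $D$ from two occurrences separated by a single gap, and even $D$ from two occurrences separated by one gap plus one extra adjacent circuit.

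For the odd case, \cref{lem:gaps} already supplies, inside $d_{m,n}$, the walks $c_n\,e_n^{\tau(k,n)}\,c_n$ for all $n\le k\le m-2$ together with $c_n c_n$; with $s=s'=1$ these realise the distances $l_n+2g$ for every $g\in\seb 0,1,\dotsc,m-n-1\sen$, i.e. all odd integers in $[l_n,\,2(m-n)+l_n-2]$. For the even case I would first record the structural fact that every $d_{k,n}$ begins and ends with $c_n c_n$: this holds for $d_{n+1,n}=c_n^{t(n)}a_n c_n^{t'(n)}$ since $t(n),t'(n)\ge 2$, and propagates through the recursion \cref{eqn:defndmn}, because each $d_{k+1,n}$ both starts and ends with a copy of $d_{k,n}$. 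As $t(k)\ge 2$, the block $d_{k+1,n}$ contains the prefix $d_{k,n}\,e_n^{2(k-n)}\,d_{k,n}$, which therefore displays the pattern $c_n c_n\,e_n^{2(k-n)}\,c_n$ for every $k\in\seb n+1,\dotsc,m-1\sen$; taking the first circuit of the leading $c_n c_n$ and the circuit just after the gap yields the even distance $2l_n+2g$ for every $g\in\seb 1,\dotsc,m-n-1\sen$, i.e. all even integers in $[2l_n+2,\,2(m-n)+2l_n-2]$.

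A short endpoint check then closes the argument: the odd family covers every odd integer of $[2l_n+1,\,2(m-n)+l_n-1]$ (the largest being $2(m-n)+l_n-2$), and the even family covers every even integer of that interval (the largest being $2(m-n)+l_n-1$, which is $\le 2(m-n)+2l_n-2$ as $l_n\ge 1$); the hypothesis $2(m-n)>3l_n$ is used only to guarantee that the target interval is non-empty. The main obstacle is exactly the even case: since the gaps are forced to be even while $l_n$ is odd, two occurrences separated by a single gap are always an \emph{odd} distance apart, so the even distances cannot be read off directly from \cref{lem:gaps}. The remedy is the observation that each $d_{k,n}$ begins and ends with the doubled circuit $c_n c_n$, which supplies the missing extra $l_n$ and repairs the parity.
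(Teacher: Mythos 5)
Your proposal is correct and takes essentially the same route as the paper: both proofs realize the needed distances by two families of walks inside $d_{m,n}$, namely $c_n\, e_n^{\tau}\, c_n$ (giving the odd distances $l_n+\tau$, since $l_n$ is odd and the available $\tau$'s are even) and $c_n c_n\, e_n^{\tau}\, c_n$ (giving the even distances $2l_n+\tau$, which exist because $t(n),t'(n)\ge 2$), followed by the same parity-and-endpoint check. One caveat: your motivational claim that \emph{every} gap between consecutive occurrences of $c_n$ in $c_{m,n}$ is a sum of the even quantities $\tau(k,n)$ is false in general, since gaps produced inside the unconstrained blocks $a_k$ (and at their junctions with the surrounding $e_n$'s) need not be even; this does not damage your argument, because the construction only ever uses the two explicit families above, but the parity dichotomy should be stated as a property of those families rather than of all distances occurring in $c_{m,n}$.
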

\begin{proof}
By the above lemma, we can assume that for all $m > k \ge n$,
 $l_k$'s are odd.
We recall that for $m > k \ge n$,
 $s(k,n) = s'(k,n) = \sum_{i = n}^k 1 = k- n +1$
 and $\tau(k,n) = s(k,n)+s'(k,n) = 2(k-n+1)$.
We also recall that for each $n' > n$,
\begin{equation}
\begin{split}
 c_{n',n} & = e_n^{s(n'-1,n)}\ \left(d_{n'-1,n}\ e_n^{\tau(n'-2,n)}\right)^{t(n'-1)-1}\ d_{n'-1,n}
 \ \ \ a_{n'-1,n}\\
 & \qquad \ \left(d_{n'-1,n}\ e_n^{\tau(n'-2,n)}\right)^{t'(n'-1)-1}
 \ d_{n'-1,n} \ e_n^{s'(n'-1,n)}.
\end{split}
\end{equation}
In $c_{n'+1,n}$,
 there exists a sub-walk $d_{n',n}\ e_n^{\tau(n'-1,n)}\ d_{n',n}$.
Each $d_{n',n}$ starts with $d_{n'-1,n}$, which starts with $d_{n'-2,n}$
 $\dotsb$.
Consequently, each $d_{n',n}$ starts with $c_n^{t(n)}$.
Similarly, each $d_{n',n}$ ends with $c_n^{t'(n)}$.
Therefore,
 there exists an occurrence $c_n^{t'(n)}\ e_n^{\tau(n'-1,n)}\ c_n^{t(n)}$
 in $d_{n'+1,n}$.
By the assumption that $s(k)=s'(k)=1$ for $m > k \ge n$,
 the integers $2, 4, 6, \dotsc, 2(m-n)$ appear in the form $\tau(k,n)$.
In conclusion, for all integers $\tau = 2, 4, 6, \dotsc, 2(m-n)$,
 $c_{m,n}$ has $c_n^{t'(n)}\ e_n^{\tau}\ c_n^{t(n)}$.
Let $v_{n,i},v_{n,j} \in V(c_n)$.
In the walk $c_n^{t'(n)}\ e_n^{\tau}\ c_n^{t(n)}$,
 there exists a walk
 $w_1 := \underset{*}{\underbracket{c_n}}
 \ e_n^{\tau}\ \underset{**}{\underbracket{c_n}}$.
We denote by $\gap_{w_1}[v_{n,i},v_{n,j}]$ the gap between
 $v_{n,i}$ in $\underset{*}{\underbracket{c_n}}$ and
 $v_{n,j}$ in $\underset{**}{\underbracket{c_n}}$.
Then, $\gap_{w_1}[v_{n,i},v_{n,j}] = l_n - i + \tau + j = l_n +j-i +\tau$.
Let
 $A := \seb l_n +j-i +\tau  \mid \tau = 2, 4, \dotsc, 2(m-n) \sen$.
Because $t'(n) \ge 2$, there exists a walk
 $\underset{*}{\underbracket{c_n}}\ c_n
 \ e_n^{\tau}\ \underset{**}{\underbracket{c_n}}$.
If we take $v_{n,i}$ in $\underset{*}{\underbracket{c_n}}$, then
 we get the set of gaps
 $B :=
 \seb 2l_n +j-i +\tau \mid \tau = 2, 4, \dotsc, 2(m-n) \sen$.
Because $l_n$ is an odd integer,
 $A \cup B \supset [3l_n,2(m-n)]$.
The conclusion is now obvious.
\end{proof}
\begin{prop}\label{prop:topologically-mixing}
Suppose that for all $n > 0$, $l_n \ge 10$ are odd integers
 and $s(n) = s'(n) = 1$ for all $n \bpi$.
Then, $(X,f)$ is topologically mixing.
The number of ergodic measures can be both 1 and 2.
In particular, for Cantor topologically mixing proximal systems of rank 2,
 the number of ergodic measures can be both 1 and 2.
\end{prop}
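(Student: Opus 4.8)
The plan is to read topological mixing off \cref{lem:one-and-odd} directly, and to settle the number of ergodic measures by exhibiting, within the standing hypotheses ($l_n \ge 10$ odd, $s(n)=s'(n)=1$), one family with $\sum_n(1-r(n)) = \infty$ and one with $\sum_n(1-r(n)) < \infty$, then quoting \cref{thm:uniquely-ergodic}. For mixing I would first reduce to the clopen base: the sets $\seb U(w) \mid w \in V(G_n),\ n \bni \sen$ refine and generate the topology, so it suffices to find, for each $n \bpi$ and each $u,v \in V(G_n) = V(c_n)$, an $N_0$ with $f^N(U(u)) \cap U(v) \nekuu$ for all $N \ge N_0$. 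I would take $N_0 := 3l_n$; given $N \ge N_0$, choosing $m > n$ with $2(m-n) \ge N$ and using $s(k)=s'(k)=1$ together with $l_k$ odd for all $k$, \cref{lem:one-and-odd} gives $N \in [3l_n, 2(m-n)] \subseteq N_{m,n}(u,v)$, so some position $a$ satisfies $(\fai_{m,n}(c_m))[a,a+N] = (u,\dotsc,v)$.

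The second step is to realize this walk as an honest orbit segment. I would pick any $x_0 \in X \setminus W^s(p)$; by \cref{rem:transitive} the row $\barx_0[m]$ contains an occurrence of the circuit $c_m$, say starting at a position $q$ (so $\ddx_0(m,q) = v_{m,0}$ and the run over $[q,q+l_m]$ traces $c_m$). Projecting to level $n$ shows that the vertex walk $(\ddx_0(n,q),\dotsc,\ddx_0(n,q+l_m))$ is precisely $\fai_{m,n}(c_m)$. Then $x := f^{q+a}(x_0)$ satisfies $\ddx(n,0) = u$ and $\ddx(n,N) = v$, i.e. $x \in U(u)$ and $f^N(x) \in U(v)$; hence $f^N(U(u)) \cap U(v) \nekuu$ for every $N \ge 3l_n$, which is topological mixing.

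For the ergodic measures, recall that $1 - r(n) = \bars(n)/l_{n+1}$ with $l_{n+1} = \bars(n) + \bart(n)l_n$, that there are at most two ergodic measures (\cref{thm:finite-ergodicity}), and that one of them is always the point mass $\mu_p$; so by \cref{thm:uniquely-ergodic} the system is uniquely ergodic exactly when $\sum_n(1-r(n)) = \infty$ and has two ergodic measures exactly when this sum converges. For a single measure I would keep $t(n) = t'(n) = 2$ and fill $a_n$ with about $4l_n$ copies of $e_n$, so that $\bars(n) \approx 4l_n$, $l_{n+1} \approx 8l_n$, and $1 - r(n) \to 1/2$; for two measures I would take $a_n$ blank with $t(n) = 2$, $t'(n) = 3$, so that $l_{n+1} = 2 + 5l_n$ grows geometrically and $\sum_n 2/l_{n+1} < \infty$. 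In each family the parity of the number of $e_n$'s in $a_n$ (respectively of $\bart(n)$) can be adjusted via \cref{lem:odd} to keep every $l_n$ odd and $\ge 10$, while $s(n)=s'(n)=1$ and $t(n),t'(n)\ge 2$ persist, so both are topologically mixing by the first part; since these are Cantor proximal systems of rank $2$, the final ``in particular'' assertion follows.

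The hard part is not the combinatorics, which \cref{lem:one-and-odd} already supplies, but the bookkeeping: matching the three viewpoints (a length $N$, a sub-walk of $\fai_{m,n}(c_m)$, and the iterate $f^N$ acting on the clopen base) in the realization step, and verifying that one simultaneous specification of the free data $a_n$, $t(n)$, $t'(n)$ can force oddness, the bound $l_n \ge 10$, and the prescribed behaviour of $\sum_n(1-r(n))$ all at once.
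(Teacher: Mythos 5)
Your proposal is correct and follows essentially the same route as the paper: topological mixing is read off \cref{lem:one-and-odd} applied to the clopen base $U(u)$, $U(v)$, and the count of ergodic measures is settled via the criterion of \cref{thm:uniquely-ergodic} by tuning $\bars(n)$ (the $e_n$-content of $a_n$) against $\bart(n)l_n$. The only differences are that you make explicit two points the paper leaves implicit --- realizing a sub-walk of $\fai_{m,n}(c_m)$ as an actual orbit segment through a point of $X \setminus W^s(p)$, and exhibiting concrete parameter families (with the parity adjustment of \cref{lem:odd}) instead of the paper's qualitative ``$s''(n)$ large/small'' prescription --- both of which are faithful elaborations rather than a different argument.
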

\begin{proof}
Fix $n \bpi$ as arbitrarily large.
Take $u,v \in V(G_n) = V(c_n)$ arbitrarily.
Then, by \cref{lem:one-and-odd}, $f^k(U(u)) \cap V(v) \nekuu$ for
 all $k \ge 3l_n$.
Because $n, u, v$ are arbitrary, we conclude that $(X,f)$ is topologically
 mixing.
Let us check whether the last statement holds.
Note that, in the previous proposition,
 we have not assumed any condition on $a_n$'s, except 
 that $l_n$'s are odd.
Therefore, by taking $s''(n)$ to be very large compared with
 $t(n)+ t'(n)+ t''(n)$,
 we can make $(X,f)$ uniquely ergodic.
On the other hand, if we make $s''(n)$ small
 compared with  $t(n)+ t'(n)+ t''(n)$,
 we can make $(X,f)$ have two ergodic measures.
\end{proof}
Hereinafter, we consider the cases in which $a_n$ ($n \bpi$) are all blank.
Thus, we have $\fai_{n+1}(c_{n+1}) = e_n^{s(n)}\ c_n^{\bart(n)}\ e_n^{s'(n)}$
 for all $n \bpi$.
We note that for $m > n$,
\[\fai_{m,n}(c_{m}) = e_n^{s(m,n)}\ d_{m+1,n}\ e_n^{s'(m,n)}, \myand\]
\[d_{m+1,n} = \left(d_{m,n}\ e_n^{\tau(m-1,n)}\right)^{\bart(m)-1}\ d_{m,n}. \]
\begin{prop}\label{prop:weakly-mixing-not-topologically-mixing}
There exist sequences $s(n) = s'(n)$ and $\bart(n)$ such that
 $(X,f)$ is not topologically mixing but weakly mixing and uniquely ergodic.
\end{prop}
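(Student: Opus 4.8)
The plan is to produce an explicit family inside the blank-$a_n$ regime of $\Scal$, taking $t(n) = t'(n) = 2$ (so $\bart(n) = 4$) and letting $s(n) = s'(n)$ grow extremely rapidly, so that $l_{n+1} = 2s(n) + 4\,l_n$. Unique ergodicity is then immediate from \cref{thm:uniquely-ergodic}: here $1 - r(n) = 2s(n)/l_{n+1} = 2s(n)/(2s(n)+4\,l_n)$, and choosing $s(n) \gg l_n$ makes each summand close to $1$, so $\sum_n(1-r(n)) = \infty$. All the work lies in arranging, within the remaining freedom in the $s(n)$, both the failure of topological mixing and topological weak mixing.

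For the failure of topological mixing I would argue directly at a fixed scale. Fix $n_0$ and an interior vertex $v_{n_0,i}$, and put $U = V = U(v_{n_0,i})$. A time $k$ with $f^k(U) \cap V \neq \kuu$ forces an occurrence of $v_{n_0,i}$ at position $0$ and another at position $k$; since $v_{n_0,i}$ is interior, both occurrences sit at offset $i$ inside a whole $c_{n_0}$-block, so $k$ is a displacement between two $c_{n_0}$-block starts. Because $\bart(n)=4$ keeps every maximal $c_{n_0}$-run of bounded length (at most four consecutive $c_{n_0}$'s), while the separating $e_{n_0}$-runs have the strictly increasing lengths $\tau(n_0,n_0) < \tau(n_0+1,n_0) < \dotsb$ of \cref{eqn:defndmn}, the $c_{n_0}$-blocks occupy a \emph{lacunary} set of positions: clusters of bounded width separated by deserts of length $\asymp \tau(k,n_0)$. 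If $s(n)$ grows fast enough that these $\tau(k,n_0)$ are lacunary, then for each $k$ there is a whole interval of integers that is not a block-to-block displacement, these intervals recur for every scale, and hence $f^k(U)\cap V = \kuu$ for infinitely many $k$. Thus $(X,f)$ is not topologically mixing.

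The main obstacle is topological weak mixing, because it pulls against the above. Any \emph{clean arithmetic} obstruction to mixing --- all return times confined to a coset of $d\mathbb{Z}$, which is exactly what one obtains by forcing all $l_n$ even and all spacers even --- is precisely a continuous eigenfunction for a $d$th root of unity and would destroy weak mixing; this is why \cref{prop:topologically-mixing} needed odd $l_n$, and why the desert obstruction above must be kept lacunary rather than periodic. To secure weak mixing I would exclude every nonconstant continuous eigenfunction. If $g\circ f = \lambda g$ with $\lambda = e^{2\pi i\theta}$, then evaluating $g$ along points lying deep inside $c_n$-runs (where $f^{l_n}$ moves a point only at levels $> n$) forces $\norm{\theta l_n} \to 0$, while evaluating near the fixed point $p$ via $g(p)=\lambda g(p)$ pins down the spacer phases. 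I would choose the $s(n)$, hence the heights $l_n$, to be \emph{arithmetically generic}: for every $q>1$ infinitely many $l_n$ are not divisible by $q$ (killing rational $\theta$), and the spacer partial sums governing the tower eigenvalue relation never stabilise (killing the remaining $\theta$ for which $\norm{\theta l_n}\to 0$). The coexistence of short-range richness --- of the kind extracted in \cref{lem:one-and-odd} at each scale --- with the large-scale deserts of the previous paragraph is what makes simultaneous weak mixing and non-mixing possible.

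Putting these together gives a system in $\Scalue$ that is topologically weakly mixing but not topologically mixing, proving the proposition. I expect the delicate point to be the quantitative choice of $\seb s(n) \sen$ that simultaneously (i) keeps each $1-r(n)$ bounded below so that $\sum(1-r(n)) = \infty$ (unique ergodicity), (ii) makes $\tau(n,n_0)$ lacunary enough to force recurring return-time deserts (non-mixing), and (iii) keeps $\seb l_n \sen$ and the spacer phases generic enough to exclude every nontrivial continuous eigenvalue (weak mixing); verifying that a single sequence meets all three is the technical heart of the argument.
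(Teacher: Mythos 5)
Your unique-ergodicity step is fine, and your non-mixing step is essentially the paper's own idea (deserts in the set of block-to-block displacement times), though you leave the quantitative choice of $s(n)$ vague. The genuine gap is the weak-mixing step: you propose to establish topological weak mixing by excluding nonconstant continuous eigenfunctions, but for the systems at hand this criterion is vacuous and insufficient. Every Cantor proximal system with topological rank 2 is topologically transitive (\cref{rem:transitive}) and has a fixed point $p$; so if $g \circ f = \lambda g$ with $g$ continuous, then $|g|$ is a continuous invariant function, hence constant by transitivity, while $g(p) = \lambda g(p)$ forces either $\lambda = 1$ (whence $g$ is an invariant continuous function, hence constant) or $g(p) = 0$ (whence $g \equiv 0$). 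Thus \emph{all} systems in $\Scal$ automatically have no nonconstant continuous eigenfunctions --- including the systems of \cref{prop:not-weakly-mixing}, which are transitive, eigenfunction-free, and \emph{not} weakly mixing. So absence of continuous eigenvalues cannot yield weak mixing here, and your claim that a clean arithmetic return-time obstruction ``is precisely a continuous eigenfunction for a $d$th root of unity'' fails in the proximal setting: in \cref{prop:not-weakly-mixing} all gaps between occurrences of $v_{n,1}$ are multiples of $p$, yet no eigenfunction exists, because the phase is lost during the long excursions near the fixed point.

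There is also an internal tension in your parameter choices. You take $s(n) \gg l_n$ at \emph{every} level to get unique ergodicity cheaply, but then the hypothesis of \cref{lem:one-and-odd} (namely $s(k) = s'(k) = 1$ over long ranges of levels, with odd heights) never holds, and that lemma is the only available source of the ``short-range richness'' you invoke. The paper's proof resolves exactly this conflict by proving weak mixing positively and combinatorially: it keeps $s(k) = s'(k) = 1$ with odd $\bart(k)$ on long blocks of levels, so that by \cref{lem:one-and-odd} there is a \emph{single} time $3 l_{n(j)}$ at which every pair of cylinders at scale $n(j)$ is matched, which gives transitivity of $(X \times X, f \times f)$; the large spacers --- and hence the non-mixing deserts --- are inserted only at sparse boundary levels, and unique ergodicity survives because at those boundary levels $1 - r > 3/7$, so $\sum_i (1-r(i))$ still diverges by \cref{thm:uniquely-ergodic}. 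To repair your outline you must replace the eigenfunction argument by a direct verification of transitivity of the product system of this kind; as it stands, the central claim of the proposition (weak mixing) is unproved.
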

\begin{proof}
Let $n(1)$ be a positive integer.
Suppose that $G_n$ is constructed for all $n \le n(1)$ such that
 $s(n) = s'(n)$ and $l_1 \ge 3$ is odd.
It is possible to make $l_{n(1)}$ odd by letting $\bart(n)$ be odd
 for all $n < n(1)$.
We remark that $s(n(1)-1) = s'(n(1)-1)$ need not be $1$.
Let $m > n(1)$.
For $k$ with $m > k \ge n(1)$,
 let $s(k) = s'(k) = 1$ and $\bart(k) \ge 3$ be odd.
Then, by \cref{lem:odd}, each $l_k$ $(m \ge k \ge n(1))$ is odd.
Then, for some large $m$, by \cref{lem:one-and-odd},
 we can assume that for all $u,v \in V(c_{n(1)})$,
 $3l_{n(1)} \in  N_{m,n(1)}(u,v)$.
For every $u,v \in V(G_{n(1)})$,
 we get $f^{3l_{n(1)}}(U(u)) \cap U(v)  \nekuu$.
In particular,
 for every $(u_1,v_1),(u_2,v_2) \in V(G_{n(1)}) \times V(G_{n(1)})$,
 we get
 $(f \times f)^{3l_{n(1)}}\left(U(u_1) \times U(v_1)\right)
 \cap U(u_2)\times U(v_2) \nekuu$.
In the above process, we have not defined $s(m),s'(m)$.
We just defined $s(k) = s'(k) = 1$ for $m > k \ge n(1)$.
We take $\bart(m) \ge 3$ to be an odd integer arbitrarily.
We note that
\[\fai_{m+1,n}(c_{m+1}) = e_n^{s(m,n)}\ d_{m+1,n}\ e_n^{s'(m,n)}, \myand\]
\[d_{m+1,n} = \left(d_{m,n}\ e_n^{\tau(m-1,n)}\right)^{\bart(m)-1}\ d_{m,n}. \]
Thus, $l(d_{m+1,n})$ is determined by $l(d_{m,n})$,
 $s(i) = s'(i)$ with $i < m$, and $\bart(m)$.
We take $s(m) = s'(m)$
 to be sufficiently large such that $s(m)+s'(m) > 1.5 \cdot l(d_{m+1,n(1)})$.
We recall that
\[d_{m+2,n} =
 \left(d_{m+1,n}\ e_n^{\tau(m,n)}\right)^{\bart(m+1)-1}\ d_{m+1,n}.\]
Roughly,
 the length of separation of each vertex of $c_n$ is within
 $d_{m+1,n}$ or within
 $d_{m+1,n}\ e_n^{\tau(m,n)}\ d_{m+1,n}$, or it is much greater.
Then, for arbitrary $s(k), s'(k), \bart(k)$ $(k > m)$,
 there exists a constant $K_1 > 1$ such that for every $m' > m$
 and for all $u,v \in V(c_{n(1)})$,
 $[l(d_{m+1,n(1)}) +1,l(d_{m+1,n(1)})+K_1] \cap N_{m',n(1)}(u,v) = \kuu$.
Therefore, $l \in [l(d_{m+1,n(1)})+1,l(d_{m+1,n(1)})+K_1]$
 implies that $f^l(U(u)) \cap U(v) = \kuu$.
Let $n(2) = m+1$, and proceed in the same manner.
We get system $(X,f)$, which is not topologically mixing but weakly mixing.
We show that this construction brings about uniquely ergodic systems.
In the construction, it is obvious that $l_{n+1} > 3l_n$ for all $n \ge n(1)$.
To find the number of ergodic measures, let us compute $1-r(i)$.
We recall that $s(i) = s'(i) = 1$ except when $i = n(k)-1$ for some $k \ge 1$.
Let $n'(k) = n(k)-1$ for all $k \ge 1$.
Suppose that $n(k) \le i < n'(k+1)$ for some $k$.
Then, we get
 $1- r(i) = (s(i) + s'(i))/(s(i)+s'(i) + \bart(i)l_i)
 < 2/(2 + 3l_i) < 2/3l_i < 2/3^i$.
Thus, by \cref{thm:uniquely-ergodic},
 the number of ergodic measures is determined only by the divergence of
\begin{equation}\label{eqn:diverge-weak-mix}
 \sum_{k = 1}^{\infty} (1-r(n'(k))).
\end{equation}
Fix $k$ and let $m = n'(k)$.
We compute
 $1- r(m) = (s(m) + s'(m))/(s(m)+s'(m) + \bart(m)l_{m})$.
If we take $s(m) = s'(m)$ to be large, then
 $1-r(m)$ can be taken arbitrarily close to $1$.
Thus,
 we can make \cref{eqn:diverge-weak-mix} diverge, i.e., $(X,f)$ can be made
 uniquely ergodic.
This concludes the proof.
We would like to show that, by our construction,
 \cref{eqn:diverge-weak-mix} always diverges.
We have assumed that $s(m)+s'(m) > 1.5 \cdot l(d_{m+1,n(k-1)})$.
We let $n = n(k-1)$.
Thus, we get
\begin{align*}
1- r(m) & >  (1.5 \cdot l(d_{m+1,n})/(1.5 \cdot l(d_{m+1,n}) + \bart(m)l_{m})\\
 & =
 (1.5 \cdot l(d_{m+1,n})/(1.5 \cdot l(d_{m+1,n}) + l(d_{m+1,n})+ \tau(m-1,n))\\
 & =  (1.5 \cdot l(d_{m+1,n})/(1.5 \cdot l(d_{m+1,n}) + l(d_{m+1,n})+ 2(m-n))\\
 & >  (1.5 \cdot l(d_{m+1,n})/(1.5 \cdot l(d_{m+1,n}) + 2 \cdot l(d_{m+1,n}))\\
 & =  3/7.
\end{align*}
Thus, \cref{eqn:diverge-weak-mix} diverges.
\end{proof}
\begin{prop}\label{prop:not-weakly-mixing}
There exist sequences $s(n)$, $s'(n)$, and $\bart(n)$ such that
 $(X,f)$ is not weakly mixing.
Both uniquely ergodic systems
 and systems with two ergodic measures are possible.
We can take sequences such that $s(n) = s'(n)$ for all $n \bpi$.
\end{prop}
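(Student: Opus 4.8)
The plan is to install, for the fixed modulus $q=2$, a \emph{parity obstruction} that prevents the product $(X\times X,f\times f)$ from being transitive, while keeping enough freedom in the parameters to realize unique ergodicity and two ergodic measures separately. The guiding idea is that if every block of $e_n$'s inserted by the substitution has even length, then in each array $\barx[n]$ all copies of the circuit $c_n$ become aligned to a single residue class modulo $2$; consequently the set of return times between two fixed \emph{non-central} vertices of $c_n$ is confined to one residue class mod $2$, and two vertex pairs with gaps of opposite parity then produce opene subsets of $X\times X$ with no common return time.

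Concretely, I first fix $n_0$ with $l_{n_0}\ge 4$ and $l_{n_0}$ even (arranged by choosing the first circuit of even length and telescoping), and I impose $s(n)=s'(n)$ \emph{even} for every $n\ge 1$, with each $\bart(n)\ge 2$ otherwise free. Since $a_n$ is blank, the recursion $\fai_{m,n_0}(c_m)=e_{n_0}^{s(m-1,n_0)}\,d_{m,n_0}\,e_{n_0}^{s'(m-1,n_0)}$ together with $d_{m,n_0}=\bigl(d_{m-1,n_0}\,e_{n_0}^{\tau(m-2,n_0)}\bigr)^{\bart(m-1)-1}d_{m-1,n_0}$ shows that every exponent of $e_{n_0}$ occurring at any level is a sum of the quantities $s(i),s'(i)$ with $i\ge n_0$; as each of these is even and $l_{n_0}$ is even, every maximal run of $e_{n_0}$ in the bi-infinite word $\barx[n_0]$ has even length. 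Hence, writing the successive starting positions of the copies of $c_{n_0}$ in $\barx[n_0]$, consecutive starts differ by $l_{n_0}$ plus the length of an $e_{n_0}$-run, which is $\equiv 0\pmod 2$, so all copies of $c_{n_0}$ start at positions of one fixed parity. A non-central vertex $v_{n_0,j}$ with $0<j<l_{n_0}$ occurs only inside copies of $c_{n_0}$, at offset $j$; therefore every return time $\ell$ with $f^{\ell}(U(v_{n_0,j}))\cap U(v_{n_0,j'})\nekuu$ satisfies $\ell\equiv j'-j\pmod 2$.

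The failure of weak mixing is then immediate. Taking $U_1=V_1=U_2=U(v_{n_0,1})$ and $V_2=U(v_{n_0,2})$, the return-time set of $(U_1,V_1)$ lies in $2\Z$ while that of $(U_2,V_2)$ lies in $1+2\Z$; thus no $\ell\ge 0$ satisfies both $f^{\ell}(U_1)\cap V_1\nekuu$ and $f^{\ell}(U_2)\cap V_2\nekuu$, so the opene sets $U_1\times U_2$ and $V_1\times V_2$ witness that $(X\times X,f\times f)$ has no dense forward orbit, i.e.\ is not transitive, and $(X,f)$ is not weakly mixing. Within this family the ergodic count is governed by \cref{thm:uniquely-ergodic} through $1-r(i)=(s(i)+s'(i))/(s(i)+s'(i)+\bart(i)l_i)$: taking $s(i)=s'(i)$ to be a large even multiple forces $1-r(i)$ bounded below, so $\sum_i(1-r(i))=\infty$ and the system is uniquely ergodic, whereas fixing $s(i)=s'(i)=2$ and letting $l_i$ grow geometrically (via $\bart(i)\ge 2$) makes $1-r(i)\le 4/l_i$ summable, so $\sum_i(1-r(i))<\infty$ and the system has two ergodic measures. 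In both cases $s(n)=s'(n)$, as required.

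The main obstacle is the parity bookkeeping of the second step: one must check that \emph{every} maximal $e_{n_0}$-run remains even across all scales of the inverse limit, including runs formed by amalgamating the trailing $e^{s'(\cdot,n_0)}$ of one high-level block with the leading $e^{s(\cdot,n_0)}$ of the next, which is precisely where the recursion \eqref{eqn:defndmn}--\eqref{eqn:basic} and the uniform evenness of all $s(i),s'(i)$ must be invoked. A secondary delicate point is the exclusion of the central vertex $v_{n_0,0}$, which also appears inside the $e_{n_0}$-runs and would corrupt the clean residue assignment; this forces the requirement $l_{n_0}\ge 4$, ensuring that two distinct non-central vertices with gaps of opposite parity are available.
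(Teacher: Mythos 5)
Your proof is correct and follows essentially the same route as the paper: the paper fixes a modulus $p \ge 3$ and makes all $s(i)$, $s'(i)$ (hence all $l_n$) multiples of $p$, so that return times from $U(v_{n,1})$ to $U(v_{n,1})$ and from $U(v_{n,1})$ to $U(v_{n,2})$ lie in distinct residue classes, giving the same product-set obstruction $(f\times f)^{\ell}\bigl(U(v_{n,1})\times U(v_{n,1})\bigr)\cap\bigl(U(v_{n,1})\times U(v_{n,2})\bigr)=\emptyset$ that you derive; your argument is exactly this with $p=2$. The remaining differences are cosmetic: your parity choice instead of $p\ge 3$, and your more explicit bookkeeping of the $e_{n}$-run lengths and of the parameter regimes realizing one versus two ergodic measures, both of which the paper leaves as "easy to see."
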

\begin{proof}
Let $p \ge 3$ be a positive integer.
Let $l_1 \ge 3$ be a multiple of $p$.
Take sequences $s(i),s'(i)$ $(i \bpi)$ such that all $s(i)$ and $s'(i)$ are
 multiples of $p$.
Note that $s(i) = s'(i)$ $(i \bpi)$ is possible.
This guarantees the last statement.
Then, every $l_n$ is a multiple of $p$ for all $n \bpi$.
Fix $n > 1$.
We recall that $c_n = (v_{n,0},v_{n,1},v_{n,2},\dotsc,v_{n,l_n})$.
Then,
 it is easy to see that every two occurrences of $v_{n,1}$ have a gap that is
 a multiple of $p$.
It is the same for $v_{n,2}$.
Therefore,
 $(f \times f)^l(U(v_{n,1}) \times U(v_{n,1})) \cap
 (U(v_{n,1}) \times U(v_{n,2}))
 = \kuu$
 for all $l \ge 0$.
Thus, the first statement is proved.
This construction does not restrict the size of $s(n) + s'(n)$
 and $\bart(n)l_n$.
Thus, it is easy to see that the second statement is valid.
\end{proof}
This brief survey on some properties of proximal Cantor systems with topological rank 2 is no more than just a starting point.
We presented only one concrete example that is mentioned in \cite[Proposition 55]{Blanchard_2008TopSizeOfScrambSet}.
There remain numerous cases of non-primitive substitutions of 2 symbols; some of them must have topological rank 2.
Although proximal Cantor systems with topological rank 2 have some similarities with rank 1 transformations that are considered in the vast field of ergodic theory (cf., for example, \cite{Kal84Kalikow_1984TwofoldMixingImplies}),
 we could not find any link nor identify overlapping systems.

\vspace{5mm}

\noindent
\textsc{Acknowledgments:}
The author would like to thank the anonymous referee(s) for their kind advices.
One of them was pointing out our failure
 not to clarify the coincidence of two topological ranks.
The author would like to thank Editage (www.editage.jp) for providing English-language editing services, before this version was finished.
This work was partially supported by JSPS KAKENHI
(Grant Number 16K05185).

\newcommand{\etalchar}[1]{$^{#1}$}
\providecommand{\bysame}{\leavevmode\hbox to3em{\hrulefill}\thinspace}
\providecommand{\MR}{\relax\ifhmode\unskip\space\fi MR }
\providecommand{\MRhref}[2]{%
  \href{http://www.ams.org/mathscinet-getitem?mr=#1}{#2}
}
\providecommand{\href}[2]{#2}


\end{document}